%
%
%
%
%
%
%
%
\documentclass[10pt]{article}
\usepackage{amsthm,amsfonts,amsmath,amscd,amssymb,epsfig}
\usepackage[all]{xy}
\usepackage{enumerate,array}
\parindent=0pt
\parskip=4pt
%
%
%
%
%
\title{The role of string topology\\ in symplectic field theory}
\author{K.~Cieliebak and J.~Latschev}
\date{5 October 2007}
%
%
%
%
%
\theoremstyle{plain}
\newtheorem{theorem}{Theorem}[section]
\newtheorem{thm}[theorem]{Theorem}

\newtheorem{cor}[theorem]{Corollary}

\newtheorem{prop}[theorem]{Proposition}
\newtheorem{lemma}[theorem]{Lemma}
\newtheorem{conj}[theorem]{Conjecture}

\newtheorem{thmintro}{Theorem}

\theoremstyle{remark}

\theoremstyle{definition}
\newtheorem*{definition}{Definition}
%
%

%

%

\newcommand{\wt}{\widetilde}

\newcommand{\p}{\partial}

\newcommand{\om}{\omega}

\newcommand{\eps}{\varepsilon}
\newcommand{\pHi}{\varphi}

\newcommand{\K}{{\mathbb{K}}}

\newcommand{\Z}{{\mathbb{Z}}}
\newcommand{\R}{{\mathbb{R}}}
\newcommand{\C}{{\mathbb{C}}}
\newcommand{\Q}{{\mathbb{Q}}}

\renewcommand{\d}{{\bf d}}

\renewcommand{\a}{{\bf a}}

\newcommand{\h}{{\bf h}}

\newcommand{\F}{{\bf F}}
\newcommand{\G}{{\bf G}}

\newcommand{\A}{{\bf A}}
\newcommand{\B}{{\bf B}}

\newcommand{\D}{{\bf D}}

\renewcommand{\H}{{\bf H}}
\renewcommand{\L}{{\bf L}}
\newcommand{\PHI}{{\mathbf \Phi}}

%


\newcommand{\const}{{\rm const}}

\newcommand{\sgn}{{\rm sgn\,}}

\newcommand{\CZ}{{\rm CZ}}

\newcommand{\length}{{\rm length}}

\newcommand{\SFT}{{\rm SFT}}
\newcommand{\lin}{{\rm lin}}

\newcommand{\MM}{\mathcal{M}}
\newcommand{\CC}{\mathcal{C}}

\newcommand{\HH}{\mathcal{H}}

\renewcommand{\AA}{\mathcal{A}}
\newcommand{\PP}{\mathcal{P}}
\newcommand{\WW}{\mathcal{W}}
\newcommand{\DD}{\mathcal{D}}
%

%
\newcommand{\comment}[1]{}
\newcommand{\x}{\times}
\newcommand{\chains}{{\mathcal{C}}}

\newcommand{\one}{{\mathbf 1}}
\newcommand{\str}{{\rm string}}
%

%

\begin{document}
\maketitle

\section{Introduction}\label{sec:intro}

Over the past two years, we have developed a program for incorporating
holomorphic curves with boundary on some closed Lagrangian submanifold
into the general framework of symplectic field theory (SFT). These
notes give an outline of the
proposed theory, with an emphasis on ideas, geometric intuition and
a description of the resulting algebraic structures. Precise proofs
involve a substantial amount of technical work and will appear
elsewhere~\cite{CL:exact, CL:string}. 

The starting point of our work is the observation, originally due to
Fukaya~\cite{Fu:06}, that the new phenomena in the compactification of
moduli spaces that occur for curves with boundary are usefully
described in terms of the string topology of Chas and
Sullivan~\cite{CS}. Fukaya's work concerns the case of holomorphic
disks with boundary on some closed Lagrangian submanifold in a closed
symplectic manifold. He viewed these moduli spaces as ordinary or
equivariant chains on the free loop space of the Lagrangian
submanifold via the evaluation at the boundary. Boundary bubbling is
then modelled by the loop bracket or string bracket, depending on
whether or not the disks involved come with marked points
on the boundary.  

Our initial goal was to apply the same idea to compare the SFT
invariants of a unit cotangent bundle $S^*Q$ to the string 
topology of the underlying smooth manifold $Q$. It turns out that,
when suitably interpreted, the structures on the two sides are
isomorphic. The simplest statement of this isomorphism is discussed in
Section~\ref{sec:cot} of these notes. On the SFT side, it
involves the {\em linearized contact homology} $HC^\lin(T^*Q)$ of the
cotangent bundle $T^*Q$. Such a linearization in fact exists for any
exact symplectic manifold with a convex end. Moreover, one can derive the
following result from the algebraic properties of SFT as described
in~\cite{EGH}: 
\begin{thmintro}[cf.~Theorem~\ref{thm:filling_linear}]\label{thm:A}
Every exact symplectic manifold $(X,d\lambda)$ with convex boundary
gives rise to a linearized contact homology $HC^\lin(X,\lambda)$ which
has the structure of an involutive graded Lie bialgebra. 
\end{thmintro}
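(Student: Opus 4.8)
The plan is to obtain all the structure from the algebraic formalism of symplectic field theory attached to the convex boundary $M=\p X$, exactly in the spirit of~\cite{EGH}, treating the analytic foundations (SFT compactness, transversality, coherent orientations, gluing) as given. First I would recall the relevant package. Fix $\alpha=\lambda|_M$ and let the good Reeb orbits of $\alpha$, graded by Conley--Zehnder index with the usual shift, generate the graded Weyl algebra of SFT with its graded Poisson bracket. The symplectization $\R\times M$ then carries the full SFT Hamiltonian $\H$, an element of a completion of this algebra whose coefficients are the signed counts of rigid punctured holomorphic curves of all genera in $\R\times M$, organized by their asymptotic Reeb orbits at the positive and negative punctures and by the genus; it satisfies the master equation $\H\star\H=0$. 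Its genus-zero truncation is the rational Hamiltonian $\h$ with $\{\h,\h\}=0$, and the component of $\h$ counting curves with a single positive puncture is the contact homology differential.

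Next comes the filling. Because $(X,d\lambda)$ is exact with convex end $M$, there are no non-constant closed holomorphic spheres and the relevant energies are a priori bounded, so SFT compactness applies to moduli spaces of punctured holomorphic curves in $X$ that have only positive punctures. Assembling the rigid such curves into a potential $\F$ --- equivalently, an augmentation $\eps$ of the contact homology algebra --- and analysing the one-dimensional moduli spaces, one obtains the filling master equation expressing that $\eps$ is a chain map, invariantly that $\H$ annihilates the exponential of $\F$. I would then linearize by conjugating $\H$ and $\h$ by $\F$, writing $\h^\F$ for the result, and isolating finite-arity components: the component counting curves with one positive and one distinguished negative puncture (all remaining negative punctures capped off by $\F$) gives a differential $d^\lin$ with $(d^\lin)^2=0$, whose homology is $HC^\lin(X,\lambda)$; the component with two positive and one distinguished negative puncture gives a bracket $[\cdot,\cdot]$; the component with one positive and two distinguished negative punctures gives a cobracket $\delta$ --- the grading conventions being fixed so that $[\cdot,\cdot]$ and $\delta$ carry the degree of the Chas--Sullivan string bracket and cobracket.

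It then remains to extract the bialgebra axioms from the master equations. The components of $\{\h^\F,\h^\F\}=0$ that count genus-zero curves with three distinguished positive, resp.\ three distinguished negative, punctures give, modulo $d^\lin$-exact terms and hence on $HC^\lin(X,\lambda)$, the Jacobi identity for $[\cdot,\cdot]$, resp.\ the co-Jacobi identity for $\delta$, while the mixed component gives the Drinfeld compatibility that $\delta$ is a derivation for $[\cdot,\cdot]$. The involutivity relation $[\cdot,\cdot]\circ\delta=0$ is the delicate one: it is invisible to the rational theory, since gluing the pair of pants with one positive and two negative punctures underlying $\delta$ to the pair of pants with two positive and one negative puncture underlying $[\cdot,\cdot]$ along two ends produces a genus-one curve with one positive and one negative puncture. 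One must therefore use the genus-one, one-positive--one-negative part of the full master equation $\H\star\H=0$ (after capping by $\F$), which exhibits $[\cdot,\cdot]\circ\delta$ as $\pm\,d^\lin\circ\Theta\pm\Theta\circ d^\lin$ for the corresponding genus-one operation $\Theta$ (counting rigid genus-one curves with one distinguished positive and one distinguished negative puncture, the rest capped), hence as zero on $HC^\lin(X,\lambda)$. Collecting these identities yields the involutive graded Lie bialgebra.

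The main obstacle is really on two levels. Most basically, rigorous SFT needs a working transversality and gluing package for all the moduli spaces above, which is the subject of ongoing foundational work and is taken on faith here; this is why the precise proofs appear elsewhere. Granting it, the one genuinely non-formal algebraic point is the involutivity relation, where one is forced to leave the rational theory for the all-genus Hamiltonian and to check that the extra cappings by $\F$ --- legitimate precisely because exactness rules out sphere and disc bubbling --- do not spoil the relation, the bookkeeping of Conley--Zehnder degrees and coherent signs being the most error-prone routine part.
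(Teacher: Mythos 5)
Your proposal follows essentially the same route as the paper: the all-genus SFT Hamiltonian of the boundary with its master equation, the augmentation given by the filling potential $\F$, twisting/capping by $\F$, the differential, bracket and cobracket extracted from genus-zero curves with one, two, resp.\ two uncapped punctures of the appropriate signs, and involutivity obtained from the genus-one, one-positive-puncture term as a chain homotopy --- exactly the mechanism the paper packages into its augmented BV$_\infty$-algebra formalism (Proposition~\ref{prop:LieBi}, applied in Theorem~\ref{thm:filling_linear}). The only difference is presentational: the paper abstracts the bookkeeping into the twisted operator $D^\beta$ and its $\hbar$-expansion, while you run the same argument directly on the Weyl-algebra master equations.
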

A precise definition of this concept is given in Section~\ref{sec:alg}.
It might be interesting to note that the bracket and cobracket are
defined in terms of curves of genus 0, but the proof of the involutivity
property involves moduli spaces of curves of genus 1. We also point
out that on the chain level one has the ``infinity version'' of this
structure. The homotopy theory of these chain-level structures is the 
subject of joint work with K.~Fukaya~\cite{CFL}.

In the context of Theorem~\ref{thm:A}, suppose further that $(X,d\lambda)$
contains an exact Lagrangian submanifold $Q \subset X$ which is
oriented and relatively spin. Consider its {\em string
homology} $H(\Sigma Q,Q)$, i.e. the singular homology (with
rational coefficients) of the space $\Sigma Q= C^0(S^1,Q)/S^1$ of
strings (= loops up to rotation in the domain) relative to the
constant strings. Chas and Sullivan~\cite{CS2}  showed that it too
carries the structure of a graded involutive Lie bialgebra. Now we have
\begin{thmintro}[cf.~Theorem~\ref{thm:filling_mor}]\label{thm:B}
The moduli spaces of holomorphic disks with one puncture and boundary
on $Q \subset (X,d\lambda)$ give rise to a chain map inducing a
morphism of involutive Lie bialgebras
$$
HC^{\lin}(X,\lambda) \to H(\Sigma Q, Q).
$$
\end{thmintro}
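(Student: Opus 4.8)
The plan is to construct an explicit chain map $\Phi\colon CC^\lin(X,\lambda)\to C_*(\Sigma Q,Q)$ by counting once-punctured holomorphic disks with boundary on $Q$ and evaluating along their boundary, and then to check that the induced map intertwines the two pairs of operations. Write $\wh X=X\cup([0,\infty)\x M)$ for the completion, $M=\p X$; fix a compatible almost complex structure, cylindrical at the convex end, together with the abstract perturbations needed to make all moduli spaces below regular (in the sense of the virtual theory). For a good Reeb orbit $\gamma$ on $M$, let $\MM_\gamma$ be the moduli space of holomorphic maps $u\colon(D^2\setminus\{0\},\p D^2)\to(\wh X,Q)$ with a single positive interior puncture at $0$ asymptotic to $\gamma$, modulo holomorphic reparametrization of the domain; the boundary values then define an evaluation $\Ev\colon\MM_\gamma\to\Sigma Q$, the residual rotational ambiguity being precisely the passage from loops to strings. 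I would set $\Phi(\gamma):=\Ev_*[\ol\MM_\gamma]$, the push-forward of the fundamental chain of the compactified moduli space, extended linearly over whatever Novikov-type coefficients are needed for the linearized contact homology of $(X,\lambda)$ to be defined, and arrange the grading conventions so that $\dim\MM_\gamma$ matches the degree of $\gamma$. The target is the \emph{relative} chain complex $C_*(\Sigma Q,Q)$ --- quotienting by chains on the constant strings --- which is forced both because the Chas--Sullivan operations live there and because $\ol\MM_\gamma$ may have boundary strata along which the boundary loop collapses to a point of $Q$.

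\emph{That $\Phi$ is a chain map} is the heart of the matter: it is the analysis of the codimension one boundary of $\ol\MM_\gamma$ in the one-dimensional case. By SFT compactness for curves with boundary on $Q$ --- the main technical input, established in \cite{CL:exact} --- this boundary is a union of holomorphic buildings. Here exactness is decisive: exactness of $d\lambda$ kills all closed sphere components, and exactness of $Q$ kills all disk bubbling off the boundary (any disk with boundary on $Q$ has zero $d\lambda$-area, hence is constant). Since the glued domain must remain a once-punctured disk, an Euler-characteristic count then shows that the only surviving codimension one degeneration is a breaking at the positive puncture into a single nontrivial cylinder in the symplectization --- whose further negative ends, if any, are capped by rigid planes in $\wh X$ --- sitting above a once-punctured disk. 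Reading this off gives exactly the linearized differential $\p^\lin$ of $(X,\lambda)$ (the linearization at the augmentation of the filling, as in \cite{EGH}) followed by $\Phi$. With a coherent system of orientations, for which the relative spin structure on $Q$ is used, the signs assemble into $\p_{\Sigma Q}\circ\Phi=\pm\,\Phi\circ\p^\lin$, so $\Phi$ descends to the asserted map on homology.

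\emph{Compatibility with the bracket and cobracket} is proved by the same scheme applied to two further families. For the bracket I would use the moduli space of holomorphic maps from a disk with two positive interior punctures (asymptotic to $\gamma_1,\gamma_2$) and boundary on $Q$; its codimension one boundary consists of punctures breaking off (the terms witnessing the chain-map relation), a pair of pants in the symplectization glued on top of a once-punctured disk --- which is $\Phi$ applied to the SFT bracket $[\gamma_1,\gamma_2]$ --- and a pinching of the boundary circle into two circles meeting at a point, which is precisely the Chas--Sullivan string bracket of $\Phi(\gamma_1)$ and $\Phi(\gamma_2)$; on homology this yields $\Phi_*[\gamma_1,\gamma_2]=\pm[\Phi_*\gamma_1,\Phi_*\gamma_2]$. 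For the cobracket I would use the moduli space of maps from an annulus with one positive interior puncture (asymptotic to $\gamma$) and both boundary circles on $Q$, evaluated on the two boundary circles: one boundary stratum pinches an arc joining the two boundary circles, turning the annulus into a once-punctured disk whose boundary loop acquires a self-intersection --- this is the string cobracket of $\Phi_*\gamma$ --- while another stratum is a pair of pants on top of two once-punctured disks, which is $(\Phi_*\otimes\Phi_*)$ of the SFT cobracket of $\gamma$. Passing to homology gives the coalgebra compatibility. Finally, since involutivity is a property of a graded Lie bialgebra and not extra structure, a Lie bialgebra morphism between the involutive structures of Theorem~\ref{thm:A} and of Chas--Sullivan is automatically a morphism of involutive Lie bialgebras, so no genus one moduli spaces enter the proof of this theorem.

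\emph{The main obstacle} is analytic rather than formal: one must set up SFT compactness and gluing --- and hence transversality, via the virtual perturbation machinery --- for moduli spaces of punctured holomorphic disks, and of once-punctured annuli, with boundary on the exact Lagrangian $Q$, and equip all of them with orientations coherent with the SFT orientations and with the string-topological chain-level operations. This is precisely the work carried out in \cite{CL:exact, CL:string}. Granted these foundations, the identification of the codimension one strata above is forced by exactness of $X$ and $Q$ together with the topological type of the domains, and the theorem follows from the bookkeeping of signs and the structure equations of SFT recalled in \cite{EGH}.
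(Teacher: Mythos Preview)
Your approach is correct and is the \emph{direct} route the paper itself alludes to in the technical remarks after Theorem~\ref{thm:filling_mor} (``in~\cite{CL:exact} we will provide a complete, more direct proof which is independent of the master equation''). The paper's exposition, however, goes a genuinely different way: it first establishes the master equation $(\p+\Delta+\hbar\nabla)e^\L=e^\L\overleftarrow{\H^+}$ for the full generating function $\L$ of \emph{all} punctured curves with boundary on $Q$ (Theorem~\ref{thm:master-L}), interprets this as saying that $\overrightarrow{\L}$ is a $BV_\infty$-morphism from $(\AA,\D_\SFT)$ to the string-topology $BV_\infty$-algebra $(\CC,\D_\str)$, splits $\L=\F+\L'$ so that $\F$ becomes an augmentation, and then invokes the purely algebraic Proposition~\ref{prop:LieBi-mor} (a morphism of augmented $BV_\infty$-algebras induces a morphism of involutive Lie bialgebras on linearized homology). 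Your argument instead isolates three specific moduli spaces --- once-punctured disks, twice-punctured disks, once-punctured annuli --- and reads off chain map, bracket, and cobracket compatibility from their codimension-one boundaries; this is precisely the content of $\L^1_1$, $\L^1_2$, $\L^2_1$ in the paper's notation. Your approach is more hands-on and avoids the $BV_\infty$ formalism; the paper's approach exhibits the Lie bialgebra morphism as the shadow of a richer chain-level $BV_\infty$-morphism and generalizes cleanly to the non-exact setting of Section~\ref{sec:cob}.

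Two small imprecisions worth flagging. First, in the exact category no Novikov coefficients are needed; the paper works over $\Q$. Second, your identification of the SFT-side degenerations (``a pair of pants on top of \ldots'') is a bit too naive for the \emph{linearized} bracket and cobracket: as Figures~\ref{fig:7} and~\ref{fig:8} show, these involve all genus-zero curves in the symplectization with the right number of positive punctures, with all but the surviving negative punctures capped off by index-$0$ planes in the filling (and, for the bracket, an additional genus-one contribution). You handled this correctly for the differential (``further negative ends \ldots\ capped by rigid planes''); the same augmentation bookkeeping must be carried through for the bracket and cobracket, otherwise the boundary strata you list do not account for all codimension-one pieces.
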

Again we remark that one in fact expects a chain-level version
of this theorem, giving a morphism of the underlying ``infinity
versions'' of the structure. However the precise formulation of this
chain level statement is complicated by the fact that the string
topology operations are only partially defined, and so far there does
not exist a precise statement of the invariance properties of the
``full structure'' they generate. 

In any case, specializing to the case of cotangent bundles,
i.e. $X=T^*Q$ and $Q \subset X$ the zero section and using the fact
that the above morphism respects suitable filtrations on both sides, one
obtains the following 
\begin{thmintro}[cf.~Theorem~\ref{thm:iso}]\label{thm:C}
For each closed oriented manifold $Q$, there is an isomorphism of
involutive graded Lie bialgebras
$$
HC^\lin(T^*Q,\lambda) \stackrel{\cong}{\longrightarrow} H(\Sigma Q,Q).
$$
\end{thmintro}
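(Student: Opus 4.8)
The plan is to obtain the isomorphism simply by specializing the morphism of Theorem~\ref{thm:B} to the case $X=T^*Q$ with $Q$ the zero section, and then to prove that this particular morphism is bijective. Denote by $\Phi\colon HC^\lin(T^*Q,\lambda)\to H(\Sigma Q,Q)$ the morphism of involutive graded Lie bialgebras furnished by Theorem~\ref{thm:B}; since it is already known to intertwine all the bialgebra operations, it suffices to show that $\Phi$ is an isomorphism of graded vector spaces. For this I would exploit the compatible filtrations on the two sides (the ``suitable filtrations'' referred to above): on the SFT side the action filtration by period of closed Reeb orbits on $S^*Q$, and on the string side the filtration of $\Sigma Q$ by sublevel sets of the energy functional. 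The count defining $\Phi$ is by once-punctured holomorphic disks on the zero section of nonnegative symplectic area, so $\Phi$ is filtered.

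The core of the argument is to compute the associated graded complexes and to check that $\Phi$ induces an isomorphism between them. Fix an auxiliary metric $g$ on $Q$, generic so that its closed geodesics are Morse--Bott nondegenerate. On the string side, the associated graded is the Morse--Bott complex of the energy functional $E$ on the ($S^1$-equivariant, relative-to-constants) loop space, with generators the closed geodesics and differential counting negative gradient flow lines of $E$. On the SFT side, after a standard Morse--Bott perturbation of the canonical contact form on $S^*Q$ (and of the fibrewise $S^1$-symmetry), the associated graded of $HC^\lin(T^*Q,\lambda)$ is a complex generated by the same closed geodesics, with differential counting rigid holomorphic cylinders. The point is then the adiabatic-limit mechanism of Abbondandolo--Schwarz and Salamon--Weber: as the cotangent fibres are rescaled to zero, these cylinders converge to negative gradient flow lines of $E$, matching the Conley--Zehnder grading with the Morse index of $E$ up to the standard shift, so that the two associated graded differentials agree. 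Moreover, in the same limit the moduli space of once-punctured disks defining $\Phi$, evaluated at the boundary, is identified with the space of ``half'' gradient trajectories (the unstable-manifold data) of the corresponding geodesic; hence on associated graded $\Phi$ is a chain representative of the identity, and in particular an isomorphism.

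To pass from the associated graded to $\Phi$ itself I would invoke the standard comparison of spectral sequences: both filtrations are exhausting and, in each fixed homological degree, only finitely many action/energy levels contribute below any given bound (the length spectrum of $g$ is discrete), so the spectral sequences converge and an isomorphism on the first page forces $\Phi$ to be an isomorphism. Equivalently, one applies the five lemma degree by degree to the long exact sequences of successive filtration steps.

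I expect the main obstacle to be the analytic content of the associated-graded step, carried out in~\cite{CL:exact}: establishing the adiabatic correspondence between once-punctured holomorphic disks in $T^*Q$ with boundary on the zero section and gradient flow / unstable manifolds of $E$, together with the transversality, compactness and gluing required to make this a genuine chain-level statement, and the careful bookkeeping of gradings, good versus bad orbits, and coherent orientations versus orientations of unstable manifolds in the $S^1$-equivariant setting. Ruling out unwanted degenerations in the adiabatic limit --- fibre bubbling, and breakings into configurations with no Morse-theoretic counterpart --- is the delicate part, and controlling the interaction of this limit with the filtrations (so that the comparison really is filtered and really does reduce to the identity on the first page) is where the real work lies.
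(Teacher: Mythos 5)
Your overall skeleton --- specialize Theorem~\ref{thm:filling_mor} to $X=T^*Q$, show the chain map respects action/length filtrations, identify the map on the associated graded, and conclude with a Five Lemma / spectral sequence comparison --- is exactly the paper's. But the two key inputs differ, and one of your steps fails as stated. The claim that the map is filtered ``because once-punctured disks have nonnegative symplectic area'' is not enough: since $\lambda|_Q=0$ on the zero section, Stokes applied to $\om=d\lambda$ only says that the area of such a disk equals the action of its asymptotic orbit, and gives no bound whatsoever on the length of the boundary loop in $Q$. What is actually needed is Lemma~\ref{lem:action-estimate}: for a specially chosen $J_\rho$ one integrates the normalized form $p\,dq/|p|$ on $T^*Q\setminus Q$ (with some care where the curve meets $Q$) to get $\sum_i\length(\sigma_i)\leq\sum_j\length(\gamma_j)$. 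Without this estimate, or a substitute, your filtered comparison does not get off the ground.

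Second, to identify the map on the associated graded you invoke an adiabatic limit in the style of \cite{AS}, \cite{SW}. The paper does not: the equality case of Lemma~\ref{lem:action-estimate} (equality holds iff the curve is a branched cover of the half-cylinder over a geodesic) shows directly, for the fixed $J_\rho$, that the only length-preserving contributions to the chain map are the trivial half-cylinders, so on the filtration quotients the map simply sends each geodesic Reeb orbit to the underlying geodesic string. This is more elementary and sidesteps the substantial new analysis your route would require, since the adiabatic results you cite are established for non-equivariant Hamiltonian Floer theory of $T^*Q$, not for the $S^1$-equivariant, Morse--Bott, punctured-disk setting of linearized contact homology. Finally, the matching of generators has one subtlety you only flag but do not resolve: bad Reeb orbits are excluded on the SFT side, and the paper disposes of them by observing that an orbit is bad precisely when the stable bundle of the corresponding $S^1$-family of geodesics is non-orientable, in which case the local Morse homology of the energy functional vanishes by \cite{Ra}, so these generators are invisible on the string side as well; some such argument is needed to make the associated graded comparison an isomorphism rather than just a map of complexes with the same ``expected'' generators.
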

This result should be viewed as an $S^1$-equivariant
analogue of the isomorphism 
\begin{equation}\label{eq:floer}
   HF_*(T^*Q) \cong H_*(\Lambda Q)
\end{equation}
between the Floer homology of the cotangent bundle $T^*Q$ and the
singular homology of the free loop space $\Lambda Q=Map(S^1,Q)$,
intertwining the pair-of-pants product on $HF_*(T^*Q)$ with the loop 
product on $H_*(\Lambda Q)$ (see~\cite{Vi}, \cite{SW}, \cite{AS}). 
Note, however, that the isomorphism of Theorem C is more intricate, as
it respects a much more elaborate algebraic structure. At present, there is 
no known interesting bialgebra structure on the non-equivariant homology 
of the free loop space.

The string topology Lie bialgebra in the case when $Q$ is a surface is
the highly non-trivial structure discovered earlier by Goldman and
Turaev. In Section~\ref{sec:cot}, we illustrate Theorem~\ref{thm:C} by
translating known properties of this Lie bialgebra into existence
results for holomorphic curves with suitable asymptotics in the
symplectization of $S^*Q$.

One of the main points of these notes is that the above results
naturally fit into a more general theory of holomorphic curves with
boundary on some closed Lagrangian submanifold $Q \subset X$ which is
oriented and relatively spin (but not necessarily exact) in a general
symplectic cobordism $X$ between 
contact manifolds $(Y^\pm,\lambda^\pm)$. The main result of this
theory will be a master equation for a suitable generating function $\L$
for the moduli spaces of holomorphic curves with boundary on $Q$ which
directly generalizes the corresponding equation for the potential $\F$
of the symplectic cobordism known from SFT:
\footnote{Strictly speaking, at the time of writing this theorem is
  still conjectural, cf. technical remarks in
  Sections~\ref{sec:master} and \ref{sec:cob}.} 
\begin{thmintro}[cf.~Theorem~\ref{thm:master-L2}]
The potential $\L$ of a (not necessarily exact) pair $(X,Q)$ satisfies
the master equation
\begin{equation}
   (\p + \Delta + \hbar\nabla)(e^\L) =
   e^\L\overleftarrow{\H^+}-\overrightarrow{\H^-}e^\L, 
\end{equation} 
where $\Delta$ and $\nabla$ are operations in string topology
described in Section~\ref{sec:string}.  
\end{thmintro}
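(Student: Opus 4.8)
The plan is to derive the master equation, just as one derives the SFT master equation for the potential $\F$, from an analysis of the codimension-one boundary of the compactified moduli spaces of punctured holomorphic curves in the cobordism $X$ with boundary on $Q$. The inputs are the SFT master equations for the Hamiltonians $\H^\pm$ of $(Y^\pm,\lambda^\pm)$, which belong to the standard SFT package, together with compactness, transversality, gluing and coherent orientations for the relevant moduli spaces of curves with Lagrangian boundary conditions. It is the transversality and gluing part, in the presence of the new boundary degenerations, that is not yet available in the required generality, and this is why the theorem must for now be stated conditionally.

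First I would fix the definition of $\L$. For each moduli space $\MM$ of (possibly disconnected) holomorphic curves in $X$ with boundary components on $Q$ and punctures asymptotic at the two ends to Reeb orbits recorded by the variables of $\H^\pm$, weighted by a power of $\hbar$ determined by the Euler characteristic, one pushes the fundamental chain of $\MM$ forward under the evaluation at the boundary, which sends a curve to the unordered collection of its boundary strings in $\Sigma Q$. Summing over all $\MM$ defines $\L$ as a chain in the completed symmetric algebra generated by $C_*(\Sigma Q,Q)$ and the SFT variables, and $e^\L$ is the corresponding generating chain for disconnected configurations. On the left-hand side of the master equation, $\p$ is the singular boundary operator on the chains in $\Sigma Q$, extended to this algebra, while $\Delta$ and $\nabla$ are the string topology operations recalled in Section~\ref{sec:string}; on the right-hand side $\overleftarrow{\H^+}$ and $\overrightarrow{\H^-}$ denote the action of the SFT Hamiltonians on the appropriate variables, from the right and from the left respectively.

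Next I would analyze the codimension-one boundary of the compactified moduli spaces and show that, after pushing forward under the boundary evaluation map, $\p(e^\L)$ is the signed sum of three families of strata: (a) breaking off a curve in the symplectization of $Y^+$ at the top, contributing $e^\L\overleftarrow{\H^+}$; (b) the mirror breaking at the negative end, contributing $-\overrightarrow{\H^-}e^\L$; and (c) boundary bubbling, in which a holomorphic disk with boundary on $Q$ splits off along a boundary component, contributing $-(\Delta+\hbar\nabla)(e^\L)$. (In the non-exact case one must in addition control interior nodal and sphere-bubbling strata; in the exact case these are excluded on energy grounds.) The heart of the argument is Fukaya's observation that, under the boundary evaluation map, stratum (c) is modeled by string topology: a boundary string pinching at a self-intersection produces the two strings recorded by the operation $\Delta$, while the variants in which the pinching joins two boundary components or raises the genus produce the term $\hbar\nabla$. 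Establishing this identification at the chain level — matching the geometric operation coming from the degenerating boundary evaluation map with the intrinsically defined $\Delta$ and $\nabla$, with the correct orientations and bookkeeping of marked points — is the main step, and is the analogue of the appearance of the loop bracket and the string cobracket in the work of Fukaya~\cite{Fu:06} and Chas--Sullivan~\cite{CS}. Rearranging the three families then yields
$$
(\p + \Delta + \hbar\nabla)(e^\L) = e^\L\overleftarrow{\H^+}-\overrightarrow{\H^-}e^\L .
$$

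I expect the main obstacle to be analytic: setting up transversality and gluing for curves with Lagrangian boundary conditions in the presence of simultaneous neck-stretching and boundary bubbling requires a virtual perturbation scheme — polyfolds or Kuranishi structures — adapted to this mixed degeneration, and this is precisely what is deferred to~\cite{CL:exact, CL:string} and incomplete at the time of writing. A secondary but still delicate difficulty is the sign and $\hbar$-bookkeeping in the exponential reorganization of disconnected curves, together with the verification that the operations extracted from stratum (c) coincide with $\Delta$ and $\nabla$ as defined in string topology. As a consistency check, specializing to the exact case with empty negative boundary should recover the statements underlying Theorems~\ref{thm:A} and~\ref{thm:B}.
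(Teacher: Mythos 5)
Your overall strategy --- read off the master equation from the codimension-one boundary of the compactified moduli spaces, with breaking at the two ends producing the right-hand side and boundary degenerations producing the string-topology terms, all conditional on the missing transversality/gluing/orientation technology --- is exactly the route of the paper (the sketch of proof of Theorem~\ref{thm:master-L} in Section~\ref{sec:master}, carried over with modified definitions to the non-exact case in Section~\ref{sec:cob}). But your boundary analysis has concrete holes. First, you omit one of the three new boundary phenomena the paper enumerates: the shrinking of a boundary loop to a point. This stratum occurs even in the exact case, its boundary evaluation lands in the constant strings $\const_k\subset\Sigma^k$, and it is matched by nothing on either side of the equation; the paper kills it precisely by taking $\L$ with values in the relative chains $C_*(\Sigma^k,\const_k)$. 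You write the relative chain groups in your definition of $\L$ but never use them, so as stated your identity has unaccounted boundary terms. Second, the right-hand side involves the numerical Hamiltonians $\H^\pm$ counting rigid curves in the symplectizations, whereas the compactifications also contain codimension-one strata in which higher-dimensional symplectization moduli break off; the paper disposes of these by observing that under evaluation at the boundary loops they give degenerate chains. Your strata (a) and (b) silently assume only rigid levels contribute, which requires this argument. Third, describing stratum (c) as ``a holomorphic disk splits off'' is not the right mechanism: in the exact case no disk can bubble at all, and in general the relevant degenerations are boundary-node formation by arc pinching inside curves of arbitrary topology. Moreover your matching of pinchings to operations is ambiguous and, read as a description of the degenerations, inverted: in the paper it is the pinching of an arc connecting two distinct boundary loops (the two loops merging into one loop with a self-intersection) that is encoded by $\Delta$, while the pinching of an arc joining a boundary loop to itself (one loop splitting into two, either disconnecting the curve or dropping the genus, whence the factor $\hbar$) is encoded by $\nabla$.

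Finally, the statement you are proving is the non-exact version (Theorem~\ref{thm:master-L2}), and there the paper's proof consists of the exact-case boundary analysis plus the bookkeeping that makes $\L$ well defined at all in that setting: formal variables $z^d$ for classes in $H_2(X,Q)$, a choice of capping surfaces for the Reeb orbits, the Maslov class entering the index formula, and Novikov-type conditions, in addition to the new disk and sphere bubbling phenomena you do mention; with these modifications in place the exact-case argument ``carries over verbatim''. Your proposal addresses only the analytic side of the non-exact case and says nothing about this homological and grading setup, which is needed even to formulate the theorem. So: same approach as the paper, but you should repair the items above before the boundary count closes up into the stated equation.
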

As we discuss in Section~\ref{sec:cob}, this result can be viewed as a
generalization of Fukaya's equation~\cite{Fu:06}
$$
\p \a_1 + \frac 1 2 [\a_1,\a_1]_\str = 0
$$
for the analogous generating function $\a_1$ counting holomorphic
disks with boundary on $Q\subset X$ (and without punctures). 
In order to use this result to construct new invariants of Lagrangian
submanifolds, one needs to understand chain-level string topology. Our
approach to this problem will be the subject of \cite{CL:string}.

Theorem D also leads to a precise formulation
(cf.~Theorem~\ref{thm:twisted-iso}) of the
principle that ``holomorphic curves with punctures asymptotic to a
Lagrangian submanifold $Q$ carry the same information as holomorphic
curves with boundary on $Q$''. 

One of the big open problems in the developement of SFT (besides the
analytic foundations) is the formulation of its relative version.
We believe that one geometrically very appealing solution to this
problem can be given with the help of string topology. Our ideas in
this direction, which were developed in discussions with K.~Mohnke,
are sketched in \ref{sec:rel}.  

We finish this introduction with some remarks about the structure of
these notes. To bring out the main ideas more clearly, we have tried
to avoid the discussion of technicalities in the main text as much as
possible. At the end of each section we then list some ``technical
remarks'' that outline more precisely the current status of the
results stated and mention the main technical issues that need
to be resolved in order to give complete proofs. 
Section~\ref{sec:SFT} starts with a quick
review of the algebraic formalism of SFT as described in~\cite{EGH}. 
In Section~\ref{sec:master} we then give a heuristic derivation of the
master equation with Lagrangian boundary
conditions. Section~\ref{sec:string} makes this somewhat more precise
by describing the necessary background from string topology. In
Section~\ref{sec:alg} we present some algebraic concepts, and in 
Section~\ref{sec:geo} we discuss how these structures can be used to describe 
both SFT and string topology. Section~\ref{sec:cot} discusses the
special case of cotangent bundles. Up to this point, we consider a
simplified setup, where all symplectic cobordisms and Lagrangian
submanifolds are assumed exact. In Section~\ref{sec:cob} we describe
the changes needed in the general case. Finally, in \ref{sec:rel},
which is joint work with K.~Mohnke, we outline our
approach to relative SFT. 

{\bf Acknowledgements. } 
In the course of this work, we profited in various degrees from discussions
with other mathematicians. The idea of relating SFT to string topology arose
at a conference in Stare Jablonki in the summer of 2004, where
K.C. attended a talk of K.~Fukaya and subsequently discussed the case 
of cylindrical contact homology with him.
We also want to thank
F.~Bourgeois,
A.~Cattaneo, 
T.~Ekholm,
Y.~Eliashberg, 
H.~Hofer,
D.~Indelicato, 
K.~Mohnke,
K.~Ono,
and D.~Sullivan 
for stimulating conversations.

\section{Symplectic field theory}\label{sec:SFT}

In this section we recall the Weyl formalism of SFT from~\cite{EGH}. 
Here, as in most of this paper (with the exception of
Section~\ref{sec:cob}), we work in the {\em exact category}, by which
we mean the following: \\
(i) All symplectic cobordisms $(X,\om)$ and Lagrangian submanifolds
$Q\subset X$ are {\em exact}, i.e.~$\om=d\lambda$ for a 1-form
$\lambda$ restricting to a contact form on the boundary and
$\lambda|_Q$ is exact. \\
(ii) All symplectic cobordisms $(X,\om)$ and Lagrangian submanifolds
have vanishing first Chern class resp.~Maslov class. Moreover, all
closed Reeb orbits $\gamma:S^1\to Y^\pm$ are nondegenerate and there
exist trivializations of $\gamma^*TX$ with respect to which all
relative first Chern classes vanish. \\ 
(iii) All Lagrangian submanifolds are oriented and relatively spin.

For example, these assumptions are satisfied in the following two
cases:

(1) $X=T^*Q$ is the cotangent bundle of an oriented manifold and $Q$
    the zero section. Here condition (i) is clear for the canonical
    1-form, $c_1(T^*X)=0$, and $Q$ is relatively spin. The rest of
    condition (ii) follows by choosing trivializations of $T(T^*Q)$
    induced by trivializations of $TQ$ along loops. 

(2) $X$ is a simply connected Stein manifold with $c_1=0$ and $Q$ is
    an oriented, spin, exact Lagrangian submanifold with vanishing
    Maslov class. Such manifolds arise e.g.~as vanishing cycles in
    fibres of exact Lefschetz fibrations over the disk (in which case
    $Q\cong S^n$). 

{\bf Contact manifolds. }
Let $(Y^{2n-1},\lambda)$ be a closed contact manifold. Use the
trivializations provided by the exactness hypothesis to define the
{\em Conley-Zehnder index} $\CZ(\gamma)\in\Z$ of a closed Reeb orbit
$\gamma:S^1\to Y$. Recall that the $k$-th iterate $\bar\gamma^k$ of a
simple closed Reeb orbit $\bar\gamma$ is called {\em good} if
$\CZ(\bar\gamma^k)\equiv \CZ(\bar\gamma)$ mod $2$. To each good closed
Reeb orbit $\gamma$ associate formal variables $p_\gamma,q_\gamma$
with gradings  
$$
   |p_\gamma| := n-3-\CZ(\gamma),\qquad |q_\gamma|:=n-3+\CZ(\gamma). 
$$
We denote by $\kappa_\gamma$ the multiplicity of the orbit $\gamma$.
Let $J$ be a cylindrical almost complex structure on $\R\times V$
adjusted to $\lambda$.
For ordered collections of closed Reeb orbits
$\Gamma^\pm=(\gamma_1^\pm,\dots,\gamma_{s^\pm}^\pm)$ and an integer
$g\geq 0$ 
denote by 
$$
  \MM_g(\Gamma^-,\Gamma^+)
$$ 
the moduli space of connected $J$-holomorphic curves of genus $g$ in
$\R\times Y$ with $s^+$ positive and $s^-$ negative punctures
asymptotic to the $\gamma_i^+$ resp.~$\gamma_j^-$. The exactness
hypothesis ensures that  
the following dimension formula holds:
$$
   \dim\MM_g(\Gamma^-,\Gamma^+) = (n-3)(2-2g-s^+-s^-) +
   \sum_i\CZ(\gamma_i^+) - \sum_j\CZ(\gamma_j^-). 
$$
If this dimension is $1$ denote by $n_g(\Gamma^-,\Gamma^+)\in\Q$ the
algebraic count of points in $\MM_g(\Gamma^-,\Gamma^+)/\R$. 
We introduce the formal variables
$$
p := \sum_\gamma \frac 1 {\kappa_\gamma} p_\gamma \gamma ,\qquad
q := \sum_\gamma \frac 1 {\kappa_\gamma} q_\gamma \gamma
$$
and the correlator
$$
{}^{-1}\langle
\underbrace{q,\dots,q}_{s^-};\underbrace{p,\dots,p}_{s^+}\rangle_g
:= \sum_{|\Gamma^\pm|=s^\pm} n_g(\Gamma^-,\Gamma^+)
\,q^{\Gamma^-}p^{\Gamma^+}, 
$$
and set
$$
\H_g := \sum_{s^-,s^+} \frac 1 {s^-!s^+!} {}^{-1}\langle
\underbrace{q,\dots,q}_{s^-};\underbrace{p,\dots,p}_{s^+}\rangle_g.
$$
Finally, let $\hbar$ be another formal variable of degree
$$
   |\hbar|=2(n-3)
$$
and define the {\em Hamiltonian}
$$
   \H := \frac{1}{\hbar}\sum_{g=0}^\infty \H_g\hbar^g. 
$$ 
The grading conventions are such that $\H$ is homogeneous of degree
$-1$.  

Let $\WW$ be the graded Weyl algebra of power series in the variables
$\hbar,p_\gamma$ with coefficients polynomial in the variables 
$q_\gamma$. $\WW$ is equipped with the associative
product $*$ in which all variables super-commute according to their
gradings except for the variables $p_\gamma,q_\gamma$ corresponding to
the same Reeb orbit $\gamma$, for which we have 
$$
   p_\gamma*q_\gamma - (-1)^{|p_\gamma||q_\gamma|}q_\gamma*p_\gamma =
   \kappa_\gamma\hbar. 
$$
Let $\PP$ be the graded commutative algebra of power series in the
variables $p_\gamma$ with coefficients polynomial in the variables 
$q_\gamma$. Using the commutation relations, every element $F\in\WW$
can be uniquely written in the {\em standard form}
\begin{equation}\label{eq:standard}
   F = \sum_{\Gamma,g}f_{\Gamma,g}(q)p^\Gamma\hbar^g
\end{equation}
with polynomials $f_{\Gamma,g}$. This gives us an identification $\WW
\cong \PP[[\hbar]]$ {\em as vector spaces}. From now on we will use this
identification, so all elements of $\WW$ are 
assumed to be written in the standard form \eqref{eq:standard}.

%
The following theorem encodes the boundaries of
$2$-dimensional moduli spaces via gluing of holomorphic curves, see
Figure~\ref{fig:1}.
\begin{figure}[h]
\begin{center}
\epsfbox{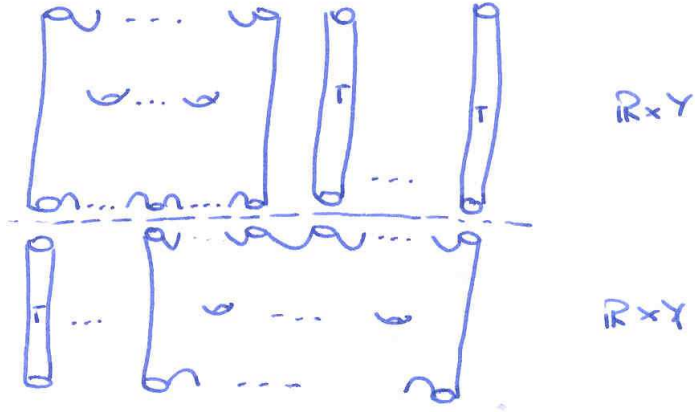}
\caption{Breaking of curves in a symplectization}
\label{fig:1}
\end{center}
\end{figure}
\begin{thm}[\cite{EGH}]\label{thm:master-H}
The Hamiltonian $\H\in\frac{1}{\hbar}\WW$ satisfies the {\em master
  equation} 
\begin{equation}\label{eq:master-H}
   \H*\H = 0.
\end{equation}
\end{thm}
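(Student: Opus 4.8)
The plan is to establish $\H*\H=0$ as the algebraic shadow of the fact that the boundary of a compact $1$-dimensional moduli space (the quotient by $\R$ of a $2$-dimensional moduli space of $J$-holomorphic curves in the symplectization $\R\times Y$) is empty, i.e.\ has algebraic count zero. First I would fix ordered collections $\Gamma^\pm$ of good closed Reeb orbits with $\dim\MM_g(\Gamma^-,\Gamma^+)/\R=1$, and analyze the SFT compactification of $\MM_g(\Gamma^-,\Gamma^+)/\R$. By the SFT compactness theorem, the boundary strata consist of two-level broken holomorphic buildings in $\R\times Y$: a bottom level, a collection of Reeb orbits $\Gamma'$ in the middle, and a top level, with the genus and punctures distributed so that each level is itself rigid modulo $\R$. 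Gluing theory identifies a neighborhood of each such broken configuration with a half-open interval $[0,\eps)$ in $\overline{\MM_g(\Gamma^-,\Gamma^+)/\R}$, so these configurations are exactly the endpoints of the compact $1$-manifold. Counting endpoints with signs gives $0$, which is the combinatorial identity
$$
\sum_{\Gamma'} \sum_{g_1+g_2=g} n_{g_1}(\Gamma^-,\Gamma')\, n_{g_2}(\Gamma',\Gamma^+) \cdot (\text{sign/combinatorial factor}) = 0 .
$$

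Second, I would show that this family of identities, summed over all $\Gamma^\pm$ and weighted by the variables $p_\gamma,q_\gamma,\hbar$ with the stated gradings and the $1/s^\pm!$ and $1/\kappa_\gamma$ normalizations, is precisely the coefficient-by-coefficient expansion of $\H*\H=0$ in the Weyl algebra $\WW$. The key point is that the noncommutative product $*$ is designed so that $p_\gamma * q_\gamma - (-1)^{|p_\gamma||q_\gamma|} q_\gamma * p_\gamma = \kappa_\gamma\hbar$ reproduces exactly the gluing of one positive puncture of the top curve to one negative puncture of the bottom curve along a common orbit $\gamma$: each such gluing contributes a factor $\kappa_\gamma\hbar$ (the $\hbar$ bookkeeping the drop in Euler characteristic, the $\kappa_\gamma$ the number of ways to match the orbit), and this cancels against the $1/\kappa_\gamma$ in the definitions of $p$ and $q$. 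The factorials $1/s^-!\,1/s^+!$ account for reorderings of the punctures, and the super-commutativity of the remaining variables encodes the orientation/sign rules on the moduli spaces. One also checks the degree bookkeeping: since $\H$ is homogeneous of degree $-1$ and $*$ is degree-preserving, $\H*\H$ is homogeneous of degree $-2$, consistent with it counting boundaries of $1$-dimensional spaces; and the dimension formula guarantees that in a two-level breaking the two pieces have dimensions summing correctly.

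The main obstacle is analytic rather than algebraic: one must know that $\overline{\MM_g(\Gamma^-,\Gamma^+)/\R}$ is genuinely a compact $1$-dimensional manifold with boundary, whose boundary consists \emph{only} of the two-level broken buildings described above. This requires (a) transversality — the relevant moduli spaces are cut out transversally, so that the virtual dimension is the actual dimension and the rigid spaces are finite sets of signed points; (b) the SFT compactness theorem, ruling out other degenerations (bubbling of spheres is excluded in the exact category by the hypotheses on $c_1$, and no boundary degenerations occur since there is no Lagrangian here); and (c) a gluing theorem showing every broken building of the expected type is the limit of a unique end of the moduli space. In the generality of SFT these foundational results rest on the polyfold program (or on ad hoc perturbation schemes), and this is exactly the point the paper flags as still requiring substantial technical work; for the purposes of this section we invoke them as in \cite{EGH}. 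Granting (a)--(c), the theorem follows by matching the signed endpoint count to the expansion of $\H*\H$ as above.
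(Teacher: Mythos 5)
Your proposal is correct and follows exactly the argument the paper has in mind: Theorem~\ref{thm:master-H} is quoted from~\cite{EGH} and justified there (and in Figure~\ref{fig:1}) precisely as the signed count of two-level broken buildings in the boundary of $1$-dimensional (mod $\R$) moduli spaces, with the Weyl relation $p_\gamma * q_\gamma - (-1)^{|p_\gamma||q_\gamma|}q_\gamma * p_\gamma = \kappa_\gamma\hbar$ encoding the gluing along a common orbit. Your caveats (a)--(c) about transversality, compactness and gluing are exactly the points deferred in the paper's technical remarks to~\cite{BEHWZ,BM,HWZ}, so the proposal matches the paper's treatment.
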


{\bf Symplectic cobordisms. }
Next let $(X^{2n},\om)$ be an exact symplectic
cobordism with convex end $(\R_+\times Y^+,\lambda^+)$ and concave end
$(\R_-\times Y^-,\lambda^-)$. Let $J$ be a compatible almost complex
structure on $X$ that is cylindrical and adjusted to $\lambda^\pm$ on
the ends. For ordered collections of closed Reeb orbits 
$\Gamma^\pm=(\gamma_1^\pm,\dots,\gamma_{s^\pm}^\pm)$ in $Y^\pm$ and an
integer $g\geq 0$ denote by
$$
   \MM_{g}(X;\Gamma^-,\Gamma^+)
$$
the moduli space of connected $J$-holomorphic curves of genus $g$ in
$X$ with $s^+$ positive and $s^-$ negative punctures asymptotic to the
$\gamma_i^+$ resp.~$\gamma_j^-$. Under our exactness hypothesis, this
moduli space has expected dimension  
$$
   \dim\MM_{g}(X;\Gamma^-,\Gamma^+) = (n-3)(2-2g-s^+-s^-) +
   \sum_i\CZ(\gamma_i^+) - \sum_j\CZ(\gamma_j^-).  
$$
If this dimension is $0$ denote by $n_g(X;\Gamma^-,\Gamma^+)\in\Q$ the
algebraic count of points in $\MM_g(X;\Gamma^-,\Gamma^+)$. 
Define the correlators
$$
{}^{0}\langle
\underbrace{q,\dots,q}_{s^-};\underbrace{p,\dots,p}_{s^+}\rangle_g
:= \sum_{|\Gamma^\pm|=s^\pm} n_g(X;\Gamma^-,\Gamma^+)
\,q^{\Gamma^-}p^{\Gamma^+}, 
$$
where the sum is taken over all ordered collections
$\Gamma^\pm$. Define the {\em potential} $\F$ of the cobordism $X$ by 
$$
   \F:=\frac{1}{\hbar}\sum_{g=0}^\infty\F_g\hbar^g,
$$
where
$$
   \F_g := \sum_{s^-,s^+}\frac 1 {s^-!s^+!} {}^{0}\langle
   \underbrace{q,\dots,q}_{s^-};
   \underbrace{p,\dots,p}_{s^+}\rangle_g.   
$$
Denote by $\DD$ the space of formal power series in $\hbar$ and the
$p_\gamma^+$ with coefficients polynomial in the $q_\gamma^-$. 
Elements in $\WW^\pm$ act as differential operators from the
right/left on $\DD$ via the replacements 
$$
   q_\gamma^+\mapsto \kappa_\gamma \hbar 
       \overleftarrow{\frac{\p}{\p p_\gamma^+}},\qquad 
   p_\gamma^-\mapsto \kappa_\gamma \hbar 
       \overrightarrow{\frac{\p}{\p q_\gamma^-}}.
$$

The following theorem encodes the boundaries of 1-dimensional moduli
spaces via gluing of holomorphic curves, see Figure~\ref{fig:2}.
\begin{figure}[h]
\begin{center}
\epsfbox{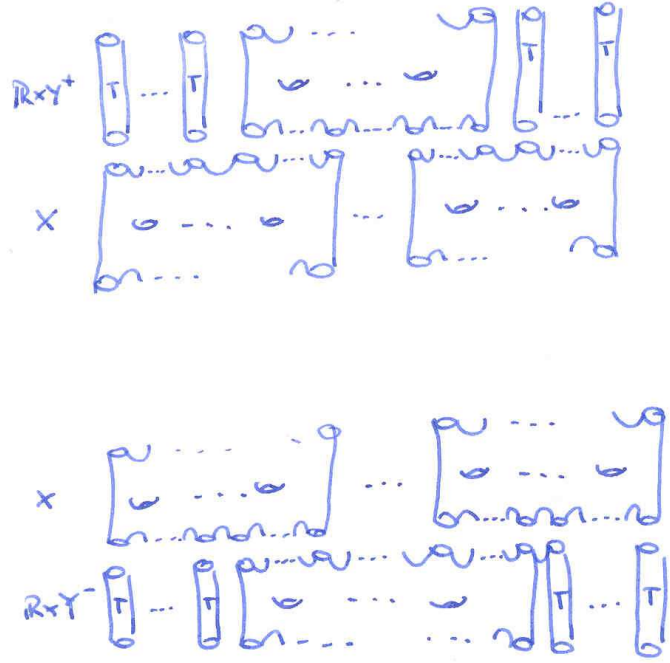}
\caption{Breaking of curves in a cobordism}
\label{fig:2}
\end{center}
\end{figure}
\begin{thm}[\cite{EGH}]\label{thm:master-F}
The potential $\F\in\frac{1}{\hbar}\DD$ satisfies the {\em master
  equation}  
\begin{equation}
\label{eq:master-F}
   e^\F\overleftarrow{\H^+}-\overrightarrow{\H^-}e^\F = 0. 
\end{equation} 
\end{thm}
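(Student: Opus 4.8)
The plan is to prove \eqref{eq:master-F} by the argument of \cite{EGH}: one analyzes the codimension-one boundary of the compactified moduli spaces of (collections of) $J$-holomorphic curves in $X$ of total Fredholm index one. Fix ordered collections $\Gamma^\pm$ and a genus $g$, and consider $\MM_g(X;\Gamma^-,\Gamma^+)$ with $\dim\MM_g(X;\Gamma^-,\Gamma^+)=1$. Taking the SFT compactification $\overline\MM_g(X;\Gamma^-,\Gamma^+)$ (via the compactness theorem of Bourgeois--Eliashberg--Hofer--Wysocki--Zehnder), and assuming the relevant transversality — for a generic adjusted $J$, or after a suitable abstract perturbation, see the technical remarks — this is a compact topological $1$-manifold with boundary, so the signed count of its boundary points vanishes.

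The next step is to classify the boundary strata. By SFT compactness a boundary point is a holomorphic building; since the Fredholm index is one, the index is additive over levels, and each symplectization level carries an extra $\R$-translation, a dimension count forces exactly two levels: either (a) a possibly disconnected index-zero curve in $X$ lying below a single connected index-one curve in $\R\times Y^+$, with trivial cylinders over the unmatched positive punctures of the $X$-part, the negative punctures of the $Y^+$-curve being glued to a subset of the positive punctures of the $X$-part; or (b) the mirror configuration, with a single connected index-one curve in $\R\times Y^-$ glued below a possibly disconnected index-zero curve in $X$. Each connected $X$-component has index zero (so is counted by a correlator entering $\F$), and the index-one symplectization curve, taken modulo $\R$, is counted by a correlator entering $\H^\pm$. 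The SFT gluing theorem shows conversely that each such configuration is the unique limit of an end of a one-dimensional moduli space, giving a bijection between $\partial\overline\MM_g(X;\Gamma^-,\Gamma^+)$ and these two families of configurations.

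The remaining step is the algebraic bookkeeping. Passing from $\F$ to $e^\F$ accounts for disjoint unions of connected curves in $X$ together with the correct symmetry factors, while the substitutions $q_\gamma^+\mapsto\kappa_\gamma\hbar\,\overleftarrow{\p/\p p_\gamma^+}$ and $p_\gamma^-\mapsto\kappa_\gamma\hbar\,\overrightarrow{\p/\p q_\gamma^-}$ implement gluing of the $Y^\pm$-curve onto the $X$-curves along matching punctures: the Leibniz rule applied to $e^\F$ produces precisely all configurations in which the single connected symplectization curve meets any number of the $X$-components along any subset of their punctures, and the powers of $\hbar$ coming from $\tfrac1\hbar\H^\pm$, $\tfrac1\hbar\F$ and the $\hbar$ in each substitution track the total Euler characteristic, hence genus and number of components. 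Thus, summing over $\Gamma^\pm$ and $g$, the type-(a) boundary configurations assemble into $e^\F\overleftarrow{\H^+}$ and the type-(b) ones into $\overrightarrow{\H^-}e^\F$; with the standard coherent orientations these enter the boundary with opposite signs, so the vanishing of the signed boundary count gives $e^\F\overleftarrow{\H^+}-\overrightarrow{\H^-}e^\F=0$.

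The main obstacle is analytic rather than combinatorial: one must know that $\overline\MM_g(X;\Gamma^-,\Gamma^+)$ is genuinely a compact $1$-manifold with exactly the boundary strata listed above — i.e.\ simultaneous transversality for the cobordism curves and the symplectization curves, the absence of spurious boundary contributions from multiply covered curves, and the gluing analysis matching each two-level building to a one-parameter family of smooth curves — together with a choice of orientations making the signs come out as stated. In the exact setting one also needs the quoted dimension formulas and the exclusion of sphere and disk bubbling, but the genuinely hard part is the same foundational package (transversality, gluing, orientations) underlying all of SFT, which is why the statement is attributed to \cite{EGH} and these issues are deferred to the technical remarks.
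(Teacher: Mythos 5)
Your proposal is the same argument the paper (following \cite{EGH}) has in mind: the master equation records the codimension-one boundary of the $1$-dimensional moduli spaces in the cobordism, whose ends are two-level buildings with a single connected index-one symplectization curve over $Y^+$ (giving $e^\F\overleftarrow{\H^+}$) or under $Y^-$ (giving $\overrightarrow{\H^-}e^\F$), with $e^\F$, the $\hbar$-substitutions and the multiplicities $\kappa_\gamma$ doing exactly the bookkeeping you describe, and the hard transversality/gluing/orientation issues deferred just as in the paper's technical remarks. No gaps beyond those the paper itself acknowledges.
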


{\bf Higher dimensional moduli spaces. }
So far we have considered only rigid holomorphic curves. The
generalization to higher-dimensional moduli spaces is straightforward
as follows, see~\cite{El} for details. We begin again with a contact manifold
$(Y,\lambda)$. A punctured $J$-holomorphic curve $f:\dot S\to\R\times
Y$ asymptotic to $\Gamma^\pm$ induces a continuous map $\bar f:\bar
S\to Y$ from the compactification $\bar S$ of $\dot S$ (replacing each
puncture by a boundary circle) to $Y$ with boundary on
$\Gamma^\pm$. Via the assignment $f\mapsto\bar f$ we view a moduli
space $\MM_g(\Gamma^-,\Gamma^+)/\R$ of dimension $d$ as a $d$-chain
$c_g(\Gamma^-,\Gamma^+)$ in the space of continuous maps with boundary
on $\Gamma^\pm$. Define correlators
$$
\langle
\underbrace{q,\dots,q}_{s^-};\underbrace{p,\dots,p}_{s^+}\rangle^Y_g
:= \sum_{|\Gamma^\pm|=s^\pm} c_g(\Gamma^-,\Gamma^+)
\,q^{\Gamma^-}p^{\Gamma^+}, 
$$
and the Hamiltonian encoding moduli spaces of all dimensions
$$
   \mathbb{H} := \frac{1}{\hbar}\sum_{g,s^-,s^+} \frac{1}{s^-!s^+!} \langle
   \underbrace{q,\dots,q}_{s^-};\underbrace{p,\dots,p}_{s^+}\rangle^Y_g
   \hbar^g.
$$
Then $\mathbb{H}\in\frac{1}{\hbar}\mathbb{W}$, where $\mathbb{W}$
consists of power series in $\hbar,p$, polynomial in $q$, with
coefficients being chains in suitable spaces of continuous maps to $Y$. 
$\mathbb{W}$ is equipped with the associative product $*$ combining
the $*$ product on $\WW$ with the obvious gluing of continuous
maps. Moreover, the boundary operator $\p$ on chains acts on
$\mathbb{W}$. Now Figure~\ref{fig:1} shows that $\mathbb{H}$ satisfies
the master equation 
\begin{equation}\label{eq:master-H2}
   \p\mathbb{H} + \frac{1}{2}\mathbb{H}*\mathbb{H} = 0.
\end{equation}
%
%

Similarly, for a symplectic cobordism $(X,\om)$ denote by $\mathbb{F}$ 
the potential encoding moduli spaces of all
dimensions. Figure~\ref{fig:2} shows that $\mathbb{F}$ satisfies the
master equation 
\begin{equation}
\label{eq:master-F2}
   \p e^{\mathbb{F}} = e^{\mathbb{F}}\overleftarrow{\mathbb{H}^+} -
   \overrightarrow{\mathbb{H}^-}e^{\mathbb{F}}. 
\end{equation} 
%
%

{\small 
\subsubsection*{Technical remarks}

The proofs of Theorems~\ref{thm:master-H} and~\ref{thm:master-F}
require a substantial amount of technical work which is still not
completed. The necessary compactness results are proven in
\cite{BEHWZ} (see also \cite{CM-comp}), and coherent orientations for the
moduli spaces are discussed in \cite{BM}. What remains to be proven
are the transversality and gluing results needed to describe the
structure of the moduli spaces. The best one can expect is that
$\MM_g(\Gamma^+,\Gamma^-)$ will be weighted branched manifolds in the
sense of \cite{CMS} with boundary and corners. The precise formulation
of the structure theorems is the subject of ongoing work of Hofer,
Wysocki and Zehnder~\cite{HWZ}, who introduce the machinery of
polyfolds as a tool to deal with the occuring analytic
difficulties. 
}

\section{The master equation with Lagrangian boundary
  conditions}\label{sec:master} 

In this section we show how degenerations of moduli spaces of punctured
holomorphic curves with Lagrangian boundary conditions lead to a
master equation similar to equation~\eqref{eq:master-F2}. The
discussion in this section is rather informal, the definitions being
postponed to Section~\ref{sec:string}. 

As in the previous section, let $(X^{2n},\om)$ be an exact symplectic
cobordism between contact manifolds $(Y^\pm,\lambda^\pm)$ with a
compatible almost complex structure $J$. In addition, let $Q\subset X$
be an exact compact oriented Lagrangian submanifold as described at
the beginning of Section~\ref{sec:SFT}. 

Fix ordered collections of closed Reeb orbits 
$\Gamma^\pm=(\gamma_1^\pm,\dots,\gamma_{s^\pm}^\pm)$ and integers
$g,k\geq 0$. Denote by
$$
   \MM_{g,k}(X,Q;\Gamma^-,\Gamma^+)
$$
the moduli space of connected $J$-holomorphic curves of genus $g$ in
$X$ with $s^+$ positive and $s^-$ negative punctures asymptotic to the
$\gamma_i^+$ resp.~$\gamma_j^-$ and with $k$ ordered boundary
components on $Q$. In view of the exactness hypotheses, this moduli
space has expected dimension   
\begin{eqnarray*}
\lefteqn{\dim\MM_{g,k}(X,Q;\Gamma^-,\Gamma^+)}\\
&=& (n-3)(2-2g-s^+-s^--k) +
   \sum_i\CZ(\gamma_i^+) - \sum_j\CZ(\gamma_j^-). 
\end{eqnarray*}
Simultaneous evaluation at the $k$ boundary circles defines a map
$$
c_{g,k}(\Gamma^-,\Gamma^+):\MM_{g,k}(X,Q;\Gamma^-,\Gamma^+)\to
\underbrace{\Sigma\x \dots \x\Sigma}_{k} =: \Sigma^k
$$ 
to the $k$-fold product of the space $\Sigma=\Sigma Q= C^0(S^1,Q)/S^1$
of closed strings ($=$ loops up to rotation in the domain) on $Q$. We
may thus view $c_{g,k}(\Gamma^-,\Gamma^+)$ as a chain in $\Sigma^k$. 
While this chain is the obvious geometric object to work with, for
various technical reasons it is more convenient to consider it as a
chain {\em relative to the constant strings}. Indeed, denote by $Q
\subset \Sigma Q$ the image of the embedding that assigns to each
point the constant string at that point, and set
$$
\const_k := \bigcup_{i=1}^k \Sigma \x \dots \x \stackrel{i}{Q} \x
\dots \x \Sigma \subset \Sigma^k.
$$
In these notes we view $c_{g,k}(\Gamma^-,\Gamma^+)$ as an element of
$C_*(\Sigma^k,\const_k)$. 

Define the correlators
$$
   \langle \underbrace{q,\dots,q}_{s^-};
   \underbrace{p,\dots,p}_{s^+}\rangle^{X,Q}_{g,k} 
   := \sum_{|\Gamma^\pm|=s^\pm}c_{g,k}(\Gamma^-,\Gamma^+)
   \,q^{\Gamma^-}p^{\Gamma^+},
$$
where the sum is taken over all ordered collections
$\Gamma^\pm$. Define the {\em potential} $\L$ of the pair $(X,Q)$ by  
$$
   \L:=\frac{1}{\hbar}\sum_{g=0}^\infty\L_g\hbar^g,
$$
where
$$
   \L_g := \sum_{s^-,s^+,k}\frac{1}{s^-!s^+!k!} \langle
   \underbrace{q,\dots,q}_{s^-};
   \underbrace{p,\dots,p}_{s^+}\rangle^{X,Q}_{g,k}.   
$$
Thus $\L$ is a formal power series in $\hbar$ and the
$p_\gamma^+$, polynomial in the $q_\gamma^-$, with coefficients in the
singular chains on $\Sigma^k$ (where $k$ ranges over all nonnegative
integers). Note that restricting to the terms of $\L$ corresponding to
$k=0$ we recover the usual potential $\F$ of the cobordism as
discussed in Section~\ref{sec:SFT}.

The following theorem encodes the codimension 1 boundaries of moduli
spaces (of any dimension) of punctured holomorphic curves with
boundary on $Q$. We state it here informally; the precise definition
of the operations $\Delta$ and $\nabla$ will be given in
Section~\ref{sec:string}. 

\begin{thm}\label{thm:master-L}
The potential $\L$ satisfies the {\em master equation}
\begin{equation}
\label{eq:master-L}
   (\p + \Delta + \hbar\nabla)e^\L =
   e^\L\overleftarrow{\H^+}-\overrightarrow{\H^-}e^\L, 
\end{equation} 
where $\p$ is the singular boundary operator and $\Delta$
resp.~$\nabla$ denote the operations of decomposing one string at a
self-intersection resp.~gluing two strings at an intersection.  
\end{thm}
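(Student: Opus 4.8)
The plan is to generalize, essentially verbatim, the proof of Theorem~\ref{thm:master-F2}, the difference being that now the moduli spaces carry boundary strings on $Q$, so the chain-level boundary operator $\p$ splits into the sum of the singular boundary, the self-intersection operation $\Delta$, and the gluing operation $\nabla$. Concretely, I would first set up the compactification of the moduli spaces $\MM_{g,k}(X,Q;\Gamma^-,\Gamma^+)$ and analyze their codimension-one strata. There are three qualitatively different types of degeneration: (a) the usual SFT breaking, in which a holomorphic building forms with a level in the symplectization $\R\times Y^\pm$, giving the terms $e^\L\overleftarrow{\H^+}$ and $-\overrightarrow{\H^-}e^\L$ on the right-hand side exactly as in \eqref{eq:master-F2}; (b) a boundary bubble in which a disk or, more generally, a curve piece with boundary on $Q$ splits off, which on the level of evaluation chains is precisely the operation of gluing two boundary strings at an intersection point of $Q$ -- this produces $\hbar\nabla$, the factor of $\hbar$ recording that two boundary components are merged into one (genus change / connectivity bookkeeping, as in the Weyl formalism); and (c) a degeneration in which a single boundary string develops a self-intersection, which on the evaluation chain is the operation $\Delta$ of decomposing one string into two at that self-intersection. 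Writing down these three operations as the components of the total boundary of the universal evaluation chain $e^\L$ is the content of the informally stated $(\p+\Delta+\hbar\nabla)e^\L$.

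Next I would carry out the bookkeeping of signs, combinatorial factors, and the passage from the count on a single moduli space to the generating-function identity. The factorials $1/(s^-!s^+!k!)$ and the normalizations $p=\sum\frac1{\kappa_\gamma}p_\gamma\gamma$, $q=\sum\frac1{\kappa_\gamma}q_\gamma\gamma$ are chosen precisely so that the gluing of buildings along Reeb orbits is implemented by the differential-operator substitutions $q^+_\gamma\mapsto\kappa_\gamma\hbar\,\overleftarrow{\p/\p p^+_\gamma}$ and $p^-_\gamma\mapsto\kappa_\gamma\hbar\,\overrightarrow{\p/\p q^-_\gamma}$; this is identical to the SFT case and I would simply invoke it. The new points are: (i) that the boundary bubbling contributes to the \emph{left-hand} side rather than the right, because the bubbled-off piece again has boundary on $Q$ and so stays within the same bookkeeping space $\mathbb W$ enriched by chains on $\Sigma^k$; and (ii) that exponentiating to $e^\L$ correctly distributes $\Delta$ and $\nabla$ over products of boundary-string chains -- here one uses that both $\Delta$ and $\nabla$ are (graded) derivations / coderivations of the appropriate degree on the algebra $\bigoplus_k C_*(\Sigma^k,\const_k)$, which is why they appear linearly in front of $e^\L$ rather than with combinatorial corrections. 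The choice to work relative to the constant strings $\const_k$ is what makes $\Delta$ and $\nabla$ well-defined as chain maps (degenerate constant strings are killed), and I would point out that this is the reason for that normalization.

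The main obstacle is not the algebra but the transversality, gluing, and \emph{corner structure} of the compactified moduli spaces $\overline{\MM}_{g,k}(X,Q;\Gamma^-,\Gamma^+)$: one needs the evaluation maps at the boundary circles to be transverse to the relevant diagonals / self-intersection loci in $\Sigma^k$ so that $\Delta$ and $\nabla$ are defined on the chain level, and one needs a gluing theorem matching each codimension-one degeneration of type (a), (b), (c) with exactly one term of the claimed equation, with consistent orientations. As in the SFT case (see the technical remarks after Theorem~\ref{thm:master-F}), a fully rigorous treatment requires the polyfold machinery of Hofer--Wysocki--Zehnder, and the string-topology operations $\Delta,\nabla$ are in general only partially defined, so the cleanest statement is at the level of chains after a suitable perturbation; the precise formulation is deferred to \cite{CL:string}. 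Modulo these analytic inputs, the proof is a direct extension of the argument for \eqref{eq:master-F2}, and the content of the theorem is really the \emph{identification} of the three boundary contributions with $\p$, $\Delta$, and $\hbar\nabla$.
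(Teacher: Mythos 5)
Your overall strategy --- compactify the moduli spaces, list the codimension-one strata, and match each stratum with a term of the generating-function identity --- is indeed the paper's strategy, but your identification of the new strata is off in ways that matter. First, ``a single boundary string develops a self-intersection'' is not a degeneration of the holomorphic curve at all: it is a generic interior phenomenon of the evaluation chain (of codimension $n-2$ in the chain, not a codimension-one boundary stratum of the moduli space). The codimension-one phenomena caused by the Lagrangian boundary are: the pinching of a boundary arc joining two \emph{distinct} boundary loops, whose limiting configuration is reproduced by applying $\Delta$ (decomposition of a loop at a self-intersection) to the evaluation chain of a moduli space with one fewer boundary component; and the pinching of an arc joining a boundary loop \emph{to itself}, reproduced by applying $\nabla$ (gluing two loops at an intersection) to chains with one more boundary component. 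Calling the latter ``boundary bubbling'' is misleading in the exact setting, where nonconstant disk bubbles are excluded and the pinching need not disconnect the curve; when it does not, the genus drops by one, which is exactly what the factor $\hbar$ in $\hbar\nabla$ records. Second, you omit the third new codimension-one stratum: a boundary loop shrinking to a point. Its evaluation lands in $\const_k\subset\Sigma^k$, and killing this stratum --- not only the well-definedness of $\Delta,\nabla$ under $\p$ --- is the reason the chains are taken relative to the constant strings.

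Two further gaps. Your claim that $\Delta$ and $\nabla$ are both derivations, so that they ``appear linearly in front of $e^\L$'', is false for $\nabla$: it is a second-order operator (the seven-term relation (vi) of Proposition~\ref{prop:string}), and this is essential rather than a nuisance, since the cross terms of $\nabla$ acting on $e^\L$ are precisely what encode the gluing of boundary loops lying on two \emph{different} connected curves, with the power of $\hbar$ balancing the resulting component/genus count; these cross terms are also the source of the ``diagonal terms'' such as $\nabla(\L\cdot\L)$ that require separate treatment. Finally, the right-hand side of \eqref{eq:master-L} involves the rigid Hamiltonians $\H^\pm$, not the chain-level $\mathbb{H}^\pm$ appearing in \eqref{eq:master-F2}; to justify this you must argue, as the paper does, that breaking off of higher-dimensional moduli spaces in the symplectizations $\R\times Y^\pm$ produces degenerate chains under evaluation at the boundary loops and therefore does not contribute to the master equation. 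Your analytic caveats (transversality of the evaluation maps, gluing, orientations, partial definedness of the string operations) do match the paper's technical remarks.
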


\begin{proof}[Sketch of proof]
Figure~\ref{fig:3} shows the three new phenomena in the codimension one
boundary of a moduli space appearing in $\L$ (where for simplicity we
omitted negative punctures) which are due to the Lagrangian boundary
conditions.  
\begin{figure}[h]
\begin{center}
\epsfbox{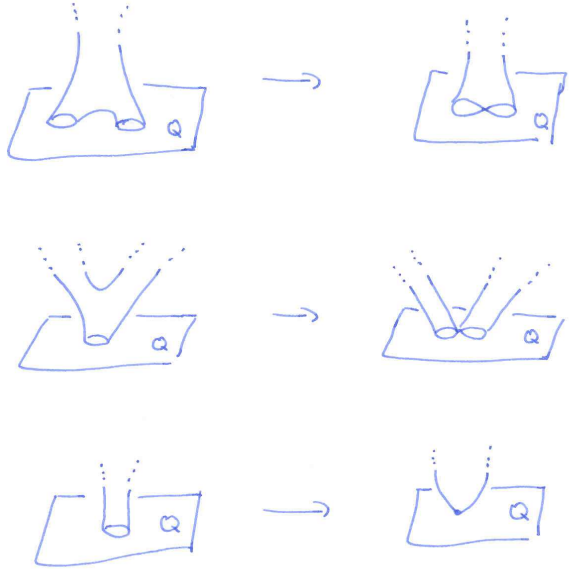}
\caption{Codimension 1 phenomena near the boundary}
\label{fig:3}
\end{center}
\end{figure}
%
The first one is the
pinching of an arc connecting two boundary loops. The inverse
procedure is decomposition of a boundary loop at a
self-intersection, which is encoded by the operation $\Delta$. 
The second boundary phenomenon is the pinching of an arc connecting
one boundary loop to itself. Here the inverse procedure is gluing two
boundary loops at an intersection, which is encoded by the operation
$\nabla$. The third one is the shrinking of a boundary loop to a
point, which leads to a chain with image in  $const_k \subset
\Sigma^k$. This part of the boundary is set to zero by working with
relative chains.

We point out that, as an abstract manifold, a moduli space in $\L$ has
other codimension 1 boundary components, e.g.~breaking off of higher
dimensional moduli spaces in a symplectization $\R\times
Y^\pm$. However, under evaluation at the boundary loops all these
moduli spaces lead to degenerate chains in $\Sigma^k$, i.e.~chains
that factor through chains of lower dimension, and therefore do not
appear in the master equation.  
\end{proof}

{\small 
\subsubsection*{Technical remarks}

Theorem~\ref{thm:master-L} is conjectural at this point, as it
requires analytic results even beyond those needed for the discussion
of SFT. However, there are clear strategies for attacking the basic
issues.

The discussion of coherent orientations should reduce to a careful
combination of the results in ~\cite{BM} for the SFT case and
\cite{FOOO} for the case of disks without punctures. There is one new
phenomenon which has not been described in the literature yet, which
deals with ``self-gluing'' at a transversal self-intersection of a
boundary curve as in the operation $\Delta$. We expect to treat
orientations completely in \cite{CL:exact}.

The necessary compactness results essentially reduce to a
combination of standard SFT compactness~\cite{BEHWZ} and the
observation that our exactness assumption rules out bubbling of
holomorphic spheres or disks. Then the remaining phenomena can be
studied in terms of degenerations of domains, using a standard
doubling trick. There is however one new feature that at
the moment we do not know how to deal with properly: In dimension
$n>3$ index considerations yield an upper bound on the number of
boundary components on any curve with given set of asymptotics
prescribed at the punctures and representing a fixed relative
homology class. We do not expect such a result for $n=2,3$. 

To achieve transversality for moduli spaces of punctured curves
with boundaries, we hope to adapt the techniques in \cite{CM-trans}.
Note that the word transversality is used in two senses here, as it
means both that moduli spaces are ``transversally cut out'' and that
the resulting chains on string space have certain transversality
properties needed to apply the string topology operations
(cf. Section~\ref{sec:string}). The gluing results will require a
combination of SFT-type gluing with boundary gluing as in \cite{FOOO}.
}

\section{String topology}\label{sec:string}

In this section we discuss the new operations on the chains of string
space introduced by Chas and Sullivan~\cite{CS,CS2}.

Let $Q$ be a closed oriented smooth $n$-manifold, and define the {\em
free loop space} $\Lambda Q$ as
$$
\Lambda := \Lambda Q := C^0(S^1,Q).
$$
In the previous section, we also defined the {\em closed string space}
$\Sigma Q$ of $Q$ by   
$$
   \Sigma:=\Sigma    Q:=\Lambda Q/S^1. 
$$
For $k\geq 1$ let
$\Sigma^k:=\Sigma\times\dots\times\Sigma$ be the $k$-fold product.  

A {\em smooth $i$-chain}
in $\Sigma^k$ is a tuple
$(K;\sigma_1,\dots,\sigma_k)$ where $K$ is some $i$-dimensional
compact connected oriented manifold with corners,
and for each $1\leq i \leq k$ the $\sigma_i:P_i \to Q$ are smooth maps
from the total space of some circle bundle $p_i:P_i \to K$ into
$Q$. To get a good {\em set} of chains, one actually needs to require
that the domains, together with all the circle bundles, come with
fixed embeddings into some $\R^N$. Denote by $\wt C_i(\Sigma^k)$ the
$\Q$-vector space with basis the smooth $i$-chains in $\Sigma^k$, and
by $C_i(\Sigma^k)$ the quotient space where we factor out the
degenerate $i$-chains and identify $(-K,\sigma_1,\dots,\sigma_k)$ with
$-(K,\sigma_1,\dots,\sigma_k)$, where $-K$ denotes $K$ with the
opposite orientation. 

Chas and Sullivan introduced partially defined coproduct and product
operations 
\begin{gather*}
   \delta: C_i(\Sigma) \to C_{i+2-n}(\Sigma \x \Sigma), \cr
   \mu: C_i(\Sigma \x \Sigma) \to C_{i+2-n}(\Sigma)
\end{gather*}
as follows. 
To define the coproduct, let a smooth $i$-chain $(K;\sigma)\in
C_i(\Sigma)$ be given and consider the pullback bundle $p^*(P) \to P$. 
Note that this bundle has a tautological section, so that the
induced map $p^*P \to Q$ can be viewed as a map $\wt\sigma:P \to
\Lambda Q$. Denote by $K_{\delta\sigma}\subset P \x [0,1]$ the
closure of the inverse 
image of the diagonal of $Q \x Q$ under the evaluation map  
\begin{align*}
ev_\sigma: P \x  (0,1) &\to Q \x Q\\
(p,t) &\mapsto (\wt{\sigma}(p)(0),\wt{\sigma}(p)(t)).
\end{align*}
Assuming enough transversality, $K_{\delta\sigma}$ is a submanifold with
corners of $P \x [0,1]$ of dimension $i +2-n$. 
Now the chain $\delta\sigma: K_{\delta\sigma}
\to \Sigma \x \Sigma$ is defined by assigning to $(p,t) \in
K_{\delta\sigma}$ the ordered pair of strings corresponding to the
pair of loops $(\wt{\sigma}(p)_{|[0,t]},\wt{\sigma}(p)_{|[t,1]})$.   
Note that the two corresponding circle bundles are trivial, since the
point where we cut the old circle gives rise to a section.

Similarly, to define the product, we start from a smooth $i$-chain
$(K;\sigma_1,\sigma_2)\in C_i(\Sigma^2)$ and consider the fiber
product $\pi:P \to K$ of the two circle bundles. Then again the
pull-backs $\pi^*(P_i) \to P$ have tautological sections, yielding
two families of loops $\wt{\sigma}_i:P \to \Lambda Q$, $i=1,2$. 
Then we consider the evaluation map
\begin{align*}
ev: P &\to Q \x Q\\ 
p &\mapsto (\wt{\sigma}_1(p)(0),\wt{\sigma}_2(p)(0)).
\end{align*}
Again assuming transversality, $K_{\mu\sigma}$ is a submanifold of
$P$ of dimension $i+2-n$, and the chain
$\mu\sigma:K_{\mu\sigma} \to \Sigma$ is 
defined by assigning to $p \in K_{\mu\sigma}$ the string
corresponding to the concatenation of the two loops
$\wt{\sigma}_1(p)$ and $\wt{\sigma}_2(p)$. The circle bundle
involved is again trivial.

More generally, on $C_*(\Sigma^k)$ we can define operations $\delta_r$
and $\mu_{r_1,r_2}$ where the above operations are applied to the
corresponding strings. Note that $\delta$ does not commute with the
boundary operator, because of the possible ``vanishing of small
loops'' (cf. Figure~\ref{fig:4} ). 
\begin{figure}[h]
\begin{center}
\epsfbox{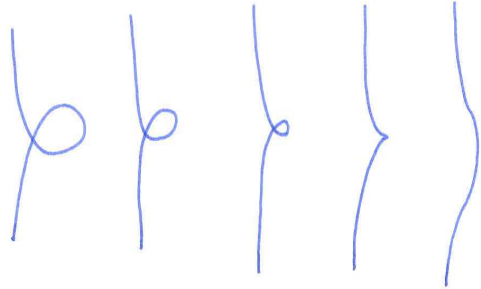}
\caption{A small loop disappears}
\label{fig:4}
\end{center}
\end{figure}
However, after passing to the chains relative
to the constant strings $C_*(\Sigma^k,\const_k)$ as in the previous section, 
we find that both operations behave well with respect to the boundary
operator. Note that an application of $\mu$ to a pair of strings of
which one is constant leads to a degenerate chain, so the operations
indeed descend to the relative chains with our conventions.

Motivated by the grading conventions of SFT, we
now consider the shifted chain complex
$$
\hat \chains_{i,k}:= C_{i+k(3-n)}(\Sigma^k, \const_k), 
$$
and note that in this new grading the operations $\delta_r$ and
$\mu_{r_1,r_2}$ have bidegrees $(-1,1)$ and $(-2(n-3)-1,-1)$,
respectively. We let the symmetric group on $k$ letters act on $\hat
\chains_{i,k}$ by 
$$
\rho \cdot \left(t \mapsto \left(\sigma_1(t),\dots \sigma_k(t)\right)\right)
:= (\sgn \rho)^{3-n} \left(t \mapsto \left(\sigma_{\rho(1)}(t),\dots
  \sigma_{\rho(k)}(t)\right)\right),
$$
and denote by $\chains_{i,k}$ the quotient of $\hat \chains_{i,k}$ by
this action. 
Then the direct sum 
$$
\chains := \Q \oplus \bigoplus_{k\geq 1, i \geq k(n-3)}
\chains_{i,k} 
$$
forms an algebra with unit 
$\one \in \Q$ under the multiplication $\cdot$ defined as follows.  
Given chains $\sigma \in \chains_{i,k}$ and $\tau \in \chains_{j,l}$ we
consider the new chain $\sigma \x \tau \in \chains_{i+j,k+l}$ which is defined
on the product of the domains and is given at the parameter value $(t,s)$ by
the concatenation of the collections of strings $\sigma(t)$ and
$\tau(s)$. Then we set 
$$
\sigma \cdot \tau := (-1)^{il(3-n)}\sigma \x \tau.
$$
One can check that
$$
\sigma \cdot \tau = (-1)^{ij}\tau \cdot \sigma,
$$
i.e. the multiplication is graded commutative. 
Denote by $\p$ the usual boundary operator on $\chains_{*,k}$, shifted
by $(-1)^{k(3-n)}$. Then a short calculation shows that $\p$ is a
derivation of the algebra $\chains$ with respect to the grading by 
the shifted dimension of chains.   
We now define an operation $\Delta: \chains \to \chains$ of bidegree
$(-1,1)$ by $\Delta(\one)=0$ and
\begin{equation}\label{eq:def_delta}
\Delta_{|\hat\chains_{*,k}} :=
\sum\limits_{r=1}^k (-1)^{(r+k)(3-n)} \delta_r  
\end{equation}
for $k \geq 1$, and an operation $\nabla: \chains \to \chains$ of bidegree
$(-2(n-3)-1,-1)$ by $\nabla(\one)=0$ and
\begin{eqnarray}
\nabla_{|\hat\chains_{*,k}} &:=& \left\{ 
\begin{array}{cl} 
0 & \text{ if } k=1\\
\sum\limits_{r_1<r_2} (-1)^{(r_2-1+k)(3-n)} \mu_{r_1,r_2} & \text{ if
} k>1
\end{array} \right.
\label{eq:def_nabla}
\end{eqnarray}
These are the operations appearing in the statement of
Theorem~\ref{thm:master-L}. One checks that they are indeed
well-defined on $\chains$. The following proposition is implicit in
\cite{CS2} and will be proven in \cite{CL:string}.
\begin{prop}\label{prop:string}
On sufficiently transverse chains one has the following identities:
\begin{enumerate}
\item $\p \Delta + \Delta \p = 0$,
\item $\Delta^2 = 0$,
\item $\Delta(c_1\cdot c_2) = \Delta(c_1)\cdot c_2 +
  (-1)^{|c_1|}c_1\cdot \Delta(c_2)$,
\item $\p \nabla + \nabla \p = 0$,
\item $\nabla^2=0$,
\item \begin{eqnarray}
\lefteqn{\nabla(c_1c_2c_3)= }\notag\\
& \nabla(c_1c_2)c_3
+(-1)^{|c_1|}c_1\nabla(c_2c_3) 
+(-1)^{|c_2||c_3|}\nabla(c_1c_3)c_2\notag\\
&  - \nabla(c_1)c_2c_3 
-(-1)^{|c_1|}c_1\nabla(c_2)c_3 
-(-1)^{|c_1|+|c_2|} c_1c_2\nabla(c_3).
\label{eq:7terms}
\end{eqnarray}
\item $\Delta \nabla + \nabla \Delta = 0$.
\end{enumerate}
\end{prop}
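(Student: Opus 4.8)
The plan is to establish all seven identities by one geometric mechanism. Each of the operations $\delta_r$ and $\mu_{r_1,r_2}$, and each of the twofold compositions occurring below, is defined by restricting a smooth family of tuples of loops to the transverse preimage of the diagonal of $Q\x Q$ --- or, for $\mu$, to a fiber product of circle bundles followed by such a preimage --- so that its output is again a chain carried by a compact oriented manifold with corners. Each identity is then obtained either by decomposing the boundary of such a manifold (identities (1) and (4)), by a combinatorial sorting of the relevant index set together with Koszul sign bookkeeping (identities (3) and (6)), or by exhibiting an orientation-reversing reparametrization that identifies the two manifolds appearing in a sum (identities (2), (5) and (7)). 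Working with relative chains $C_*(\Sigma^k,\const_k)$ enters as follows: whenever a string in the output collapses to a point --- for $\delta$ this is the ``vanishing small loop'' of Figure~\ref{fig:4}, occurring at the endpoints $t=0,1$ of the cutting interval; for $\mu$ it is concatenation with a constant string --- the evaluation lands in $\const_k$ and the chain is set to zero. Throughout, \emph{sufficiently transverse} refers to a class of chains, closed under $\p$, $\Delta$, $\nabla$ and the product $\cdot$, on which all evaluation maps in these constructions --- including the iterated ones --- meet the diagonals transversally; exhibiting such a class which still computes $H_*(\chains)$ is itself part of the work.

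For (1) and (4): given $(K;\sigma)\in\chains_{*,1}$, the manifold $K_{\delta\sigma}\subset P\x[0,1]$ has boundary the union of the faces over $\p K$, which assemble to $\pm\,\delta(\p\sigma)$, and the faces at $t=0$ and $t=1$, which are degenerate in relative chains; summing over $r$ with the signs of~\eqref{eq:def_delta} and comparing with the shifted boundary operator gives $\p\Delta+\Delta\p=0$. For $\nabla$ there is no interval parameter, so $\p K_{\mu\sigma}$ consists only of the faces over $\p K$, and the same bookkeeping against~\eqref{eq:def_nabla} gives $\p\nabla+\nabla\p=0$. For (3): expanding $\Delta(c_1\cdot c_2)$ as a sum of $\delta_r$ over the $k+l$ strings of the product, the terms with $r\le k$ combine to $\Delta(c_1)\cdot c_2$ and those with $r>k$ to $(-1)^{|c_1|}c_1\cdot\Delta(c_2)$, the Koszul sign appearing when $\Delta$ is moved past $c_1$. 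For (6): sorting the pairs $\{r_1,r_2\}$ according to whether both indices lie in the block of $c_1$, of $c_2$ or of $c_3$, or straddle two of these blocks, the seven-term relation~\eqref{eq:7terms} becomes exactly the inclusion--exclusion identity $\nabla(c_1c_2c_3)=\nabla(c_1c_2)c_3+c_1\nabla(c_2c_3)+\nabla(c_1c_3)c_2-\nabla(c_1)c_2c_3-c_1\nabla(c_2)c_3-c_1c_2\nabla(c_3)$ once the Koszul signs of~\eqref{eq:7terms} are restored.

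For the ``interior'' identities (2), (5) and (7) one uses the algebraic properties of the cut and the concatenation themselves. Coassociativity of the cut, $\delta_1\circ\delta=\delta_2\circ\delta$ as chains in $\Sigma^3$ --- a based loop with two further marked points at the basepoint, cut into three arcs, is the same configuration in whichever order the two cuts are performed --- combines with the signs of~\eqref{eq:def_delta} to force $\Delta^2=0$. Associativity and graded commutativity of the concatenation $\mu$ identify the two orders of a double merge of three strings into one, up to the orientation sign, giving $\nabla^2=0$. And the Frobenius-type compatibility between $\delta$ and $\mu$ --- cutting a pasted loop agrees with pasting after cutting, with the cut performed on either side --- identifies those strata of $\Delta\nabla$ in which the cut and the paste interact with the corresponding strata of $\nabla\Delta$ by an orientation-reversing reparametrization, while the strata in which they occur on disjoint strings cancel in pairs; this yields $\Delta\nabla+\nabla\Delta=0$.

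\emph{The main obstacle is the sign bookkeeping.} Identities (2), (5) and (7), and the Leibniz and seven-term relations (3) and (6), hold on the nose only because the shift conventions --- the $(3-n)$-twisted symmetric group action on $\hat\chains_{*,k}$, the signs $(-1)^{(r+k)(3-n)}$ and $(-1)^{(r_2-1+k)(3-n)}$ in~\eqref{eq:def_delta} and~\eqref{eq:def_nabla}, the shift of $\p$ by $(-1)^{k(3-n)}$, and the product sign $(-1)^{il(3-n)}$ --- are calibrated precisely to absorb the orientation signs produced by the diagonals and the fiber products. Fixing orientation conventions for manifolds with corners, for preimages of diagonals and for fiber products of circle bundles, and then propagating the Koszul signs through the iterated constructions, is where the real work lies; this, together with pinning down the notion of sufficiently transverse chain, is carried out in~\cite{CL:string}.
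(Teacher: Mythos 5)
A preliminary remark on the comparison you ask for: the paper itself contains no proof of Proposition~\ref{prop:string} --- it states that the identities are implicit in \cite{CS2} and defers the proof, including the jet-transversality description of the domains of definition and all orientation questions, to \cite{CL:string}. Measured against the strategy the paper does indicate (operations as transverse preimages of diagonals over circle-bundle pullbacks and fiber products; the problematic boundary faces, i.e.\ the ``vanishing small loop'' of Figure~\ref{fig:4} and concatenation with a constant string, killed by passing to $C_*(\Sigma^k,\const_k)$; all signs and the transversality class postponed to \cite{CL:string}), your outline is consistent: (i), (iv) by boundary decomposition, (iii), (vi) by block-sorting/inclusion--exclusion, (ii), (v), (vii) by sign-reversing identifications of strata. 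Like the paper, you leave the two genuinely hard points --- the orientation calculus behind the signs in \eqref{eq:def_delta}, \eqref{eq:def_nabla} and the construction of a class of sufficiently transverse chains --- to future work, so what you have is an outline rather than a proof; within that caveat most of it is sound.

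There is, however, one step that would fail as described: identity (vii). Your cancellation scheme pairs the ``interacting'' strata of $\Delta\nabla$ with strata of $\nabla\Delta$ and lets the ``disjoint'' strata cancel across the two compositions, but this does not exhaust either composition. On a chain carried by a single string, $\Delta\nabla$ vanishes identically, while $\nabla\Delta$ consists of the configurations in which the string is cut at one self-intersection and the two resulting pieces are re-merged at a second intersection point between them; these have no partner in $\Delta\nabla$ and must cancel among themselves, via the involution exchanging the roles of the two intersection points. Dually, $\Delta\nabla$ contains strata in which two strings are merged at one mutual intersection and the merged string is then cut at a \emph{second} mutual intersection of the original pair; again there is no partner in $\nabla\Delta$, and cancellation is internal, by swapping the two junctions. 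This internal cancellation is exactly the involutivity mechanism (the content that feeds the genus-one terms of the master equation), and it is not a consequence of the Frobenius-type compatibility you invoke, which only accounts for the strata where the second operation takes place on a self-intersection of one of the original factors. The analogous order-reversal involutions are also what really drives (ii) and (v); there your appeal to co/associativity happens to coincide with the right identification, but for (vii) the bookkeeping as stated does not close up and needs the extra self-cancelling pairs.
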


Denote by
$$
\HH_k:=\HH_k(\Sigma,Q) := H_{k+3-n}(\Sigma,Q)
$$
the shifted homology of string space relative to the constant strings.
Part (i) and (iv) of the proposition imply that $\delta$ and $\mu$
give rise to operations
\begin{gather*}
\delta: \HH_* \to \HH_* \otimes \HH_* \\
\mu: \HH_* \otimes \HH_* \to \HH_*
\end{gather*}
of degrees $-1$ and $-2(n-3)-1$, respectively. The other properties
listed in the proposition imply the following theorem of Chas and
Sullivan~\cite{CS2}. 
\begin{thm}[Chas-Sullivan~\cite{CS2}]\label{thm:string}
The operations $\mu$ and $\delta$ give the string homology $\HH_*(\Sigma,Q)$
of any oriented manifold $Q$ the structure of an involutive Lie bialgebra 
of bidegree $(-1,-2(n-3)-1)$.  
\end{thm}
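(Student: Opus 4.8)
The plan is to deduce the theorem formally from Proposition~\ref{prop:string}: the content is to translate the seven chain-level identities into the defining relations of an involutive graded Lie bialgebra on $\HH_* := \HH_*(\Sigma,Q)$, with cobracket $\delta$ of degree $-1$ and bracket $\mu$ of degree $-2(n-3)-1$.

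First I would pass to homology. Working over $\Q$, taking homology commutes with the symmetric-algebra-type construction defining $\chains$ (K\"unneth, together with the exactness of quotienting by the $(\sgn\rho)^{3-n}$-twisted symmetric group action), so $H_*(\chains,\p)$ is the free graded-commutative algebra generated by $\HH_*$. Identities (i) and (iv) say that $\Delta$ and $\nabla$ graded-commute with $\p$ on sufficiently transverse chains; the standard string-topology genericity argument — every relative class has a transverse representative, and for $c-c'=\p b$ one first perturbs $b$ to be transverse and then invokes (i), (iv) — shows that $\Delta$ and $\nabla$ descend to operators on $H_*(\chains,\p)$. Restricting the descended $\Delta$ to the weight-one part $\chains_{*,1}$ produces, via the $k=1$ term of \eqref{eq:def_delta}, the cobracket $\delta:\HH_*\to\HH_*\otimes\HH_*$, and restricting the descended $\nabla$ to the weight-two part recovers, via \eqref{eq:def_nabla}, the bracket $\mu:\HH_*\otimes\HH_*\to\HH_*$. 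The asserted bidegrees are exactly the homological bidegrees of $\delta_r$ and $\mu_{r_1,r_2}$ recorded before the proposition, and the graded antisymmetry of $\mu$ and co-antisymmetry of $\delta$ are built into the definition of $\chains_{*,k}$ as coinvariants and into the fact that $\Delta,\nabla$ are well-defined on all of $\chains$.

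Next I would note that, after passing to homology, $\Delta$ is the unique derivation of the symmetric algebra extending $\delta$ — this is precisely identity (iii) — and $\nabla$ is the unique second-order operator with $\nabla\one=0$ and $\nabla|_{\HH_*}=0$ whose associated bracket is $\mu$, which is precisely the seven-term relation (vi) together with the vanishing $\nabla|_{\hat\chains_{*,1}}=0$. Each remaining identity then unwinds, upon evaluation on generators, into one bialgebra axiom: $\Delta^2=0$ (identity (ii)) applied to $x\in\HH_*$ gives the co-Jacobi identity for $\delta$; $\nabla^2=0$ (identity (v)) applied to a product $xy$ gives the graded Jacobi identity for $\mu$; and $\Delta\nabla+\nabla\Delta=0$ (identity (vii)) applied to $xy$ gives the Drinfeld compatibility, i.e. the cocycle condition asserting that $\delta$ is a derivation for $\mu$. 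Finally, applying (vii) to a single generator $x\in\HH_*$ and using that $\nabla$ vanishes on $\chains_{*,1}$ yields $\nabla\Delta(x)=-\Delta\nabla(x)=-\Delta(0)=0$; since $\nabla\Delta(x)$ equals $\mu\circ\delta(x)$ up to a sign by construction, this is exactly involutivity. Assembling these gives the asserted structure.

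The main obstacle is the first step: turning the partially defined chain-level operations into honest operations on $\HH_*$, i.e. the transversality and perturbation package that underlies the phrase ``on sufficiently transverse chains'' in Proposition~\ref{prop:string} — showing that every relative homology class has a transverse representative and that the induced operation is independent of this choice, so that (i) and (iv) genuinely descend. Everything after that is formal, modulo a careful (but routine) check that the explicit signs in \eqref{eq:def_delta}, \eqref{eq:def_nabla} and in the grading shift $\HH_k=H_{k+3-n}(\Sigma,Q)$ reproduce the standard sign conventions in the Lie bialgebra axioms rather than variants of them.
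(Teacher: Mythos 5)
Your proposal follows essentially the paper's own route: Theorem~\ref{thm:string} is deduced formally from Proposition~\ref{prop:string}, with parts (i) and (iv) giving the descent of $\delta$ and $\mu$ to homology and the remaining identities unwinding into the co-Jacobi, Jacobi, Drinfeld and involutivity axioms --- the paper states this deduction in a single sentence and, like you, defers the genuine content (the transversality/perturbation package behind ``sufficiently transverse chains'') to \cite{CL:string}. One small correction: the Jacobi identity for $\mu$ is extracted by evaluating $\nabla^2=0$ on a triple product $xyz$ of weight-one classes, not on $xy$, where the identity is vacuous because $\nabla$ vanishes on $\chains_{*,1}$.
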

The precise definition of this concept will be given in the next
section.

{\small 
\subsubsection*{Technical remarks}

The statement of Proposition~\ref{prop:string} should be understood as
follows: Each of the operations $\Delta$, $\nabla$, $\Delta^2$, $\nabla^2$,
$\Delta \nabla$ and $\nabla \Delta$ has a domain of definition
described in terms of certain jet transversality conditions for
evaluation maps, and the stated identities hold on the intersections
of the domains for the operations involved. This, as well as
orientation issues, will be discussed in detail in \cite{CL:string}.

Note that in order to give the statement of Theorem~\ref{thm:master-L}
rigorous meaning, one also needs to make sense of ``diagonal terms''
like $\nabla(\L \cdot \L)$. These involve bracket-type operations for
a chain with itself, which usually does not satisfy the transversality
requirements. There are several solutions to this problem, see
e.g. \cite{Fu:06} for one which works in the context of holomorphic
curves. 
}

\section{Some algebra}\label{sec:alg}

In this section, we describe the abstract algebraic structures
underlying both SFT and string topology. 
Throughout, we fix an integer $n$ and use the symbol
$\hbar$ to denote a formal variable of degree $2(n-3)$. 
We also fix a field $\K$ of characteristic zero. 

\begin{definition}
A {\em (strictly commutative) BV$_\infty$-algebra} is a pair $(A,D)$,
where $A$ is a graded commutative algebra over $\K$ with unit and $D:
A[[\hbar]] \to A[[\hbar]]$ is a linear map of degree $-1$ satisfying
\begin{enumerate}[({BV}1)]
\item $DD=0$,
\item $D$ admits an expansion
\begin{equation}\label{eq:D-decomp}
D= \frac 1 \hbar \sum_{k=1}^\infty D^k \hbar^k,
\end{equation}
where each $D^k:A \to A$ is a differential operator of order $\leq
k$ on $A$, and
\item D(1)=0.
\end{enumerate}
\end{definition}
For the convenience of the reader, we recall the inductive definition
of a differential operator over a morphism $m:A \to B$ of graded
commutative algebras with unit: The zero map from $A$ to $B$ is a 
differential operator of order $-1$, and a linear map $D:A \to B$
is a differential operator of order $\leq k$ over $m$ if for each
$a\in A$ the map $x \mapsto D(xa) - D(x)m(a)$ is a
differential operator of order $\leq k-1$. A differential operator on $A$
is by definition a differential operator over the identity $\one:A \to A$.
For a polynomial algebra this reduces to the usual concept, where a
differential operator of order $\leq k$ is one taking at most $k$ partial
derivatives.

Since all the BV$_\infty$-algebras appearing in these notes will be
strictly commutative, we will permit ourselves to drop this qualifier
from now on. Note that the requirement $DD=0$ can be written as a
sequence of quadratic identities satisfied by the $D^k$ in the
decomposition \eqref{eq:D-decomp}. The first few of these read
$$
D^1D^1=0, \quad D^1D^2 + D^2D^1=0, \quad D^1D^3 + D^2D^2 + D^3D^1=0,
\quad \text{\rm etc.}
$$
In particular, $D^1$ is a derivation of $A$ and $D^2$ descends to the
homology algebra $H(A,D^1)$ as a differential operator of order $\leq 2$ and
square 0, so that $(H(A,D^1),D^2)$ is a BV-algebra (cf. \cite{Ge:1}). 
The deviation of
$D^2$ from being a derivation is measured by the so-called BV-bracket
$$
[a,b]_D := (-1)^{|a|}(D(ab)- D(a)b - (-1)^{|a|}aD(b)),
$$
which is an odd Poisson bracket on $H(A,D^1)$.

Next, we need the notion of a morphism of 
BV$_\infty$-algebras. We will only give an ad hoc definition in the
special case that the domain of the morphism is a free commutative
algebra $A= S(V)$, where $S(V)$ denotes the symmetric tensor product
of the graded vector space $V$. Given a linear map $\pHi:S(V) \to
B[[\hbar]]$, we define a map $e^{\pHi}:S(V)[[\hbar]] \to B[[\hbar]]$ as
\begin{eqnarray*}
\lefteqn{e^\pHi(v_1\cdots v_k):=}\\
& & \sum_{\ell=1}^k\frac{1}{\ell!}
   \hspace{-3mm}
   \sum_{i_1+\dots+i_\ell=k}\frac{1}{i_1!\cdots i_\ell!}
   \sum_{\rho\in S_k}\eps(\rho) \pHi(v_{\rho(1)}\cdots
   v_{\rho(i_1)})\cdots \pHi(v_{\rho(k-i_\ell+1)}\cdots
   v_{\rho(k)}), 
\end{eqnarray*}
where $\eps(\rho)$ is the sign one gets from rearranging the $v_j$
according to the permutation $\rho$, and the map is extended to
$S(V)[[\hbar]]$ by $\K[[\hbar]]$-linearity. 

\comment{
Note that $S(V)$ is both an algebra and a coalgebra, where the
comultiplication $c:S(V) \to S(V) \otimes S(V)$ is given as 
$$
c(v_1\cdots v_p) := \sum_{i=0}^p \sum_{\rho \in S_p \atop {\rho(1)<
    \dots \rho(i) \atop \rho(i+1) < \dots \rho(p)}} \eps(\rho)
v_{\rho(1)}\cdots v_{\rho(i)} \otimes v_{\rho(i+1)}\cdots v_{\rho(p)}.
$$
Here we agree that empty products equal $1\in S^0(V)$, and
$\eps(\rho)$ is the sign one gets from rearranging the $v_j$ according
to the permutation $\rho$. One can check that $c$ is in fact
cocommutative. 
More generally, one can define 
$c^{(r)}: S(V) \to S(V)^{\otimes r+1}$ inductively as
$c^{(1)}=c$ and $c^{(r+1)} := (\one_{S(V)} \otimes c) c^{(r)}$ for
$r\geq 1$. For any linear map $\pHi:S(V)[[\hbar]] \to B[[\hbar]]$ of
algebras satisfying $\pHi(1)=0$ one can define a map
$e^{\pHi}:S(V)[[\hbar]] \to B[[\hbar]]$ as
$$
e^\pHi:= \sum_{k=0}^\infty \frac 1 {k!} \pHi^{\otimes k} \circ
c^{(k-1)},
$$
where for $k\geq 1$  the map $f^{\otimes k}:S(V)^{\otimes k} \to B$
maps $a_1 \otimes \dots \otimes a_k$ to $f(a_1)\cdots
f(a_k)$. Moreover we agree that $c^{(-1)}$ is the projection $S(V) \to
S^0(V) \equiv \K$ and $f^{\otimes 0}: \K \to \K$ is the identity.
}

\begin{definition}
A {\em BV$_\infty$-morphism} from a free BV$_\infty$-algebra $(A=S(V),
D_A)$ to a BV$_\infty$-algebra $(B,D_B)$ is a linear map $\pHi: A \to
B[[\hbar]]$ of degree $0$ satisfying
\begin{enumerate}[({BVmor}1)]
\item $\pHi(1)=0$,
\item $e^{\pHi}D_A = D_B e^{\pHi}$, and
\item $\pHi$ admits an expansion
\begin{equation}\label{eq:pHi-decomp}
\pHi = \frac 1 \hbar \sum_{k=1}^\infty \pHi^k \hbar^k,
\end{equation}
where each $\pHi^k:A \to B$ is a differential operator of order $\leq k$
over the zero morphism, i.e. $\pHi^k(v_1\cdots v_l)=0$ for $l>k$.
\end{enumerate}
An {\em augmentation} of a free BV$_\infty$-algebra is
a morphism from $(A=S(V),D_A)$ to the trivial BV$_\infty$-algebra $(\K,0)$.
\end{definition}
As before for $D$, the equation in (BVmor2) can be expanded in powers
of $\hbar$. The leading order term will have the form
$$
e^{\pHi^1}D_A^1 = D_B^1 e^{\pHi^1}.
$$
Note that since $\pHi^1$ can only be non-zero on $V \cong S^1(V) \subset
S(V)$, the associated map $e^{\pHi^1}:S(V) \to B$ is an
algebra morphism. The above equation then says that it is in fact a
morphism of differential graded algebras from $(A,D_A^1)$ to
$(B,D_B^1)$, and so it induces a morphism between the corresponding
homology algebras. We also remark that if $\beta:A[[\hbar]] \to
\K[[\hbar]]$ is an augmentation of the BV$_\infty$-algebra $(A,D_A)$,
then $e^{\beta^1}$ is an augmentation of the differential graded
algebra $(A,D_A^1)$. 

Now let $(A=S(V),D)$ be a free BV$_\infty$-algebra and suppose we are
given an augmentation $\beta:S(V) \to \K[[\hbar]]$. From it we
construct a linear map $\Phi=\Phi^\beta:A \to A$ as
$$
\Phi(v_1\cdots v_k) = v_1\cdots v_k + \sum_{l=1}^k \frac 1 {l!(k-l)!}
\sum_{\rho \in S_k} \eps(\rho) e^{\beta}(v_{\rho(1)}\cdots v_{\rho(l)}) 
   v_{\rho(l+1)}\cdots v_{\rho(k)}.
$$
One can prove by induction that there is a unique map $f:S(V) \to
S(V)[[\hbar]]$ such that $\Phi=e^f$, and $f$ satisfies
properties (BVmor1) and (BVmor3) of a BV$_\infty$-morphism. One also
proves that $\Phi$ is invertible. 
Moreover, by construction we have $\pi_0 \Phi =
e^{\beta}$, where $\pi_0:S(V) \to V^0\cong \K$ is the projection to
the constants.  Now one can introduce a {\em twisted
BV$_\infty$-operator} $D^\beta$ on $A$ by setting
$$
D^\beta := \Phi D \Phi^{-1}.
$$
This new BV$_\infty$-operator on $A$ has the additional property that
{\em it has no constant terms}, in the sense that $\pi_0 D^\beta(a)=0$ for
any $a\in A[[\hbar]]$. This is because
$$
\pi_0 D^\beta (a) = \pi_0 \Phi D \Phi^{-1}(a) = e^{\beta} D
(\Phi^{-1}(a)) = 0
$$
since $\beta$ was an augmentation. It is now clear by construction that
$\Phi$ is an isomorphism between the BV$_\infty$-algebras $(A,D)$ and
$(A,D^\beta)$. 

BV$_\infty$-operators without
constant terms on $S(V)$ are really nice objects, as they can be 
linearized to give an ``infinity involutive Lie bialgebra structure''
on $V$. We will not do this here, but in the remainder of this section we
explain in detail the construction of the linearized homology and 
outline the construction of the Lie bialgebra structure it carries. 

Expanding $\Phi=\sum_{k=0}^\infty \Phi^k \hbar^k$ in powers of
$\hbar$, the leading order term is a map $\Phi^0:A \to A$ which has the form
\begin{eqnarray*}
\lefteqn{\Phi^0(v_1\cdots v_k) }\\
&=& v_1\cdots v_k + \sum_{l=1}^k \frac 1 {l!(k-l)!}
\sum_{\rho \in S_k} \eps(\rho) e^{\beta^1}(v_{\rho(1)}\cdots v_{\rho(l)}) 
   v_{\rho(l+1)}\cdots v_{\rho(k)}\\
&=& v_1 \cdots v_k + \sum_{l=1}^k \frac 1 {l!(k-l)!}
\sum_{\rho \in S_k} \eps(\rho) \beta^1(v_{\rho(1)})\cdots \beta^1(v_{\rho(l)}) 
   v_{\rho(l+1)}\cdots v_{\rho(k)}\\
&=& (v_1+ \beta^1(v_1)) \cdots (v_k + \beta^1(v_k)),
\end{eqnarray*}
so it is an algebra automomorphism. As above for $\Phi$, one can check
that the constant term $\pi_0 \Phi^0$ equals $e^{\beta^1}:A \to \K$,
which is an augmentation of the differential graded algebra $(A,D^1)$. 
It is now standard~\cite{Ch} to consider the {\em twisted derivation}
$$
\p^\beta := \Phi^0 D^1 (\Phi^0)^{-1},
$$
which obviously still squares to 0. One can check that in fact
$\p^\beta$ is nothing but the leading term $(D^\beta)^1$ in the
expansion of $D^\beta$. Being a derivation, it is
completely determined by the restriction $\p^\beta|_{V \subset
S(V)}$, which can be expanded as
$$
\p^\beta|_V = \p^\beta_0 + \p^\beta_1 + \p^\beta_2 + \dots
$$
with $\p^\beta_k:V \to S^k(V)$. Now note that $\p^\beta_0=
e^{\beta^1}D^1 (\Phi_0)^{-1} = 0$.  
It follows that $\p^\lin:=\p^\beta_1:V \to V$ defines a
differential. We call its homology {\em the linearized homology of the
augmented BV$_\infty$-algebra} $(A,D,\beta)$ and denote it by
$H^\lin(A,D,\beta)$.   

One can view the higher order terms $\p^\beta_k:V \to S^k(V)$ in the
expansion as operations which (after introducing suitable signs)
combine into an odd co-L$_\infty$-structure on $V$. In particular,
$\p^\beta_2:V \to S^2(V)$ gives rise to a map 
$$
\delta:V \to V \otimes V, \qquad v \mapsto (\iota \otimes
\one)c (\p^\beta_2(v)),
$$
where $\iota(w)=(-1)^{|w|}w$ and $c:S^2V \to V \otimes V$ is given on
generators as $c(v_1v_2)= v_1 \otimes v_2 + (-1)^{|v_1||v_2|}v_2
\otimes v_1$. One can check that $\delta$ descends to 
the linearized homology and gives it the structure
of a co-Lie algebra of degree $-1$, in the sense of the following 
\begin{definition}
A {\em co-Lie algebra of degree $d$} is a graded vector space $V$ with a
linear map $\delta:V \to V \otimes V$ of degree $d$ such that
\begin{enumerate}
\item 
$$
\tau_d \delta = -\delta,
$$ 
where $\tau_d:V^{\otimes 2} \to V^{\otimes 2}$ is given on decomposable
elements as  
$$
\tau_d(v_1 \otimes v_2):= (-1)^{(|v_1|+d)(|v_2|+d)} v_2 \otimes v_1,
$$
and
\item 
$$
0=(1 + \rho_d + \rho_d^2)( \delta \otimes \one)\delta:V \to
  V^{\otimes 3},
$$ 
where $\rho_d:V^{\otimes 3} \to V^{\otimes 3}$ is given on
decomposable elements as  
$$
\rho_d(v_1 \otimes v_2 \otimes v_3) = (-1)^{(|v_1|+|v_2|)(|v_3|+d)}v_3
\otimes v_1 \otimes v_2.
$$
\end{enumerate}
\end{definition}
To get a Lie bracket on the linearized homology, we expand the twisted
BV$_\infty$-operator $D^\beta$ as 
$$
D^\beta = \frac 1 \hbar \sum_k D^{\beta,k}\hbar^k
$$ 
and further consider the component $D^{\beta,2}_1:S^2(V) \to V$ of $D^{\beta,2}$.
Then one can check that the map
$$
\mu:V \otimes V \to V, \qquad \mu(v_1,v_2) =
(-1)^{|v_1|}D^{\beta,2}_1(v_1v_2)
$$
descends to the linearized homology and gives it the structure of a
graded Lie algebra of degree $-2(n-3)-1$, in the following sense.
\begin{definition}
A {\em Lie algebra of degree $d$} is a graded vector space $V$ together with
a linear map $\mu:V \otimes V \to V$ of degree $d$ such that
\begin{enumerate}
\item $\mu \tau_d = -\mu$, and
\item $0=\mu(\mu \otimes \one) (1 + \rho_d + \rho_d^2):V^{\otimes 3}
  \to V$.
\end{enumerate}
Here $\tau_d$ and $\rho_d$ are as in the previous definition.
\end{definition}
In fact, our situation is even better, since the compatibility
conditions in the BV$_\infty$-algebra force a strong link between the
Lie algebra and co-Lie algebra structures on linearized homology. 
\begin{definition}
A {\em Lie bialgebra of bidegree $(d_1,d_2)$} is a graded vector
space $V$ with linear maps $\delta:V \to V \otimes V$ of degree $d_1$
and $\mu:V \otimes V \to V$ of degree $d_2$ such that
\begin{enumerate}
\item $(V,\delta)$ is a co-Lie algebra of degree $d_1$,
\item $(V,\mu)$ is a Lie algebra of degree $d_2$, and
\item $\delta$ and $\mu$ satisfy Drinfeld compatibility, i.e.
\begin{align*}
\delta \mu(a,b) =& \sum a_1 \otimes \mu(a_2,b) + \mu(a,b_1) \otimes b_2\\
&+ \sum (-1)^{|a||b|+|a|+|b|} 
\left(\mu(b,a_1)\otimes a_2 + b_1 \otimes \mu(b_2,a)\right), 
\end{align*}
where $\delta(a)=\sum a_1 \otimes a_2$ and $\delta(b)=\sum b_1 \otimes b_2$.
\end{enumerate}
The Lie bialgebra is called {\em involutive} if moreover $0=\mu \delta:V
\to V$.
\end{definition}
Note that if $\delta$ and $\mu$ have different parity, then
involutivity is automatically satisfied on the part of the Lie
bialgebra with grading $\equiv d_1 \mod 2$.  

Our discussion on augmentations can now be summarized in the following
claim. 
\begin{prop}\label{prop:LieBi}
Any augmentation $\beta$ of a free BV$_\infty$-algebra $(A=S(V),D)$
gives rise to a linearized homology $H^\lin(A,D,\beta)$ which has the
structure of an involutive Lie bialgebra of bidegree $(-1,-2(n-3)-1)$.
\end{prop}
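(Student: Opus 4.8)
\textbf{Proof proposal for Proposition~\ref{prop:LieBi}.}

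The plan is to deduce everything from the algebraic apparatus already set up in this section. The construction of the linearized homology $H^\lin(A,D,\beta)$ was given above: one forms the isomorphism $\Phi=\Phi^\beta$, the twisted operator $D^\beta=\Phi D\Phi^{-1}$ (which has no constant terms), and then extracts $\p^\lin=\p^\beta_1$, the linear part of the twisted derivation $\p^\beta=\Phi^0 D^1(\Phi^0)^{-1}$. So the real content is (1) that the bracket $\mu$ and cobracket $\delta$ defined above are well-defined on homology, and (2) that they satisfy the co-Jacobi, Jacobi, Drinfeld and involutivity identities. My strategy is to obtain all of these as consequences of the single identity $D^\beta D^\beta=0$ together with the order-$\leq k$ differential-operator property of the components $D^{\beta,k}$, by expanding in powers of $\hbar$ and then in polynomial degree on $S(V)$.

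First I would fix notation: write $D^\beta=\tfrac1\hbar\sum_k D^{\beta,k}\hbar^k$ with $D^{\beta,k}$ a differential operator of order $\leq k$ and no constant term, and decompose each $D^{\beta,k}$ by the ``number of outputs vs. inputs'': let $D^{\beta,k}_j\colon S^{m}(V)\to S^{m-k+?}(V)$, but more usefully record the bilinear piece $D^{\beta,2}_1\colon S^2(V)\to V$ and the linear-to-quadratic piece of $\p^\beta$, namely $\p^\beta_2\colon V\to S^2(V)$. The operator $D^\beta D^\beta=0$ expands, at order $\hbar^0$, into $(D^{\beta,1})^2=0$ (so $\p^\lin$ is a differential, already noted), and at orders $\hbar^1,\hbar^2$ into the relations $D^{\beta,1}D^{\beta,2}+D^{\beta,2}D^{\beta,1}=0$ and $D^{\beta,1}D^{\beta,3}+(D^{\beta,2})^2+D^{\beta,3}D^{\beta,1}=0$. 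Restricting the first of these to $S^2(V)$ and projecting to $V$ shows $D^{\beta,2}_1$ is a chain map up to $\p^\lin$-homotopy, hence $\mu$ descends to $H^\lin$; dually, restricting to $V$ and projecting to $S^2(V)$ shows $\p^\beta_2$ descends, hence $\delta$ descends. Then:

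\emph{(i) co-Lie: } Skew-symmetry $\tau_d\delta=-\delta$ is immediate from the definition of $\delta$ via the symmetrizer $c\colon S^2V\to V\otimes V$ composed with $\iota\otimes\one$ — it is a purely formal consequence of working on the \emph{symmetric} algebra $S(V)$. Co-Jacobi follows by restricting $(D^{\beta,2})^2=-(D^{\beta,1}D^{\beta,3}+D^{\beta,3}D^{\beta,1})$ to $V$ and projecting onto $S^3(V)$: the left side produces $(\p^\beta_2\otimes\one-\text{type})$ terms assembling into $(1+\rho_d+\rho_d^2)(\delta\otimes\one)\delta$, while the right side is $\p^\lin$-exact, hence vanishes in homology. \emph{(ii) Lie: } $\mu\tau_d=-\mu$ is again formal from $S^2(V)$; Jacobi comes from the \emph{same} order-$\hbar^2$ relation, this time restricted to $S^3(V)$ and projected to $V$, where the order-$\leq2$ property of $D^{\beta,2}$ controls the cross terms so that only the $\mu(\mu\otimes\one)$-combination survives modulo exact terms. \emph{(iii) Drinfeld compatibility: } this is the mixed $S^2(V)\to S^2(V)$ component of the order-$\hbar^2$ relation; expanding $D^{\beta,2}(v_1v_2)$ using that it is a second-order operator with no constant term gives exactly the four-term right-hand side of Drinfeld's formula, again up to $\p^\lin$-exactness. \emph{(iv) Involutivity: } $\mu\delta\colon V\to V$ arises from the composite $D^{\beta,2}_1\circ\p^\beta_2$, which appears in the $V\to V$ component of $(D^{\beta,2})^2$; since $|\delta|$ and $|\mu|$ have opposite parity, this either vanishes for parity reasons on the relevant part or is forced to be $\p^\lin$-exact by the order-$\hbar^2$ relation.

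The main obstacle I anticipate is purely bookkeeping but genuinely delicate: tracking the Koszul signs through the symmetrization maps $c$, $\iota$, $\eps(\rho)$ and the degree shifts by $(n-3)$, so that the combinatorial identities coming out of $D^\beta D^\beta=0$ match the signs in the \emph{stated} definitions of co-Lie, Lie and Drinfeld exactly (rather than up to an overall sign or a different sign convention). A secondary subtlety is justifying that the ``higher'' components $D^{\beta,k}$ for $k\geq3$ contribute only $\p^\lin$-exact terms to the relations above — this is where the order-$\leq k$ differential-operator condition (BV2) and the vanishing of constant terms in $D^\beta$ are essential, and I would isolate this as a lemma: \emph{on $S^{\leq 3}(V)$, modulo the image of $\p^\lin$, the operator $D^\beta$ is determined by $\p^\lin$, $\delta$, and $\mu$ alone.} Granting that lemma, all seven identities reduce to the finitely many explicit ones listed, and the proposition follows.
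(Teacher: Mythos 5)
Your overall strategy coincides with the one the paper itself sketches in Section~\ref{sec:alg} (and defers in detail to~\cite{CFL}): pass to the twisted operator $D^\beta=\Phi D\Phi^{-1}$, use that it has no constant terms, and read off all seven identities from the expansion of $D^\beta D^\beta=0$ in powers of $\hbar$ and in word length on $S(V)$, with the order-$\leq k$ property (BV2) and the absence of constant terms killing the unwanted cross terms. That part is right, and your isolation of the ``no constant terms'' point is exactly where the weight lies. However, several of your concrete attributions of identities to $\hbar$-orders are wrong, and as written those steps fail, because the cobracket $\delta$ is built from $\p^\beta_2$, which is a component of $D^{\beta,1}=\p^\beta$ and \emph{not} of $D^{\beta,2}$. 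Consequently: (1) the chain-map property of $\delta$ and the co-Jacobi identity come from the order-$\hbar^0$ relation $(D^{\beta,1})^2=0$, i.e.\ from the odd co-$L_\infty$ relations mentioned in the text, with $\p^\beta_3$ providing the homotopy; the composite $(\delta\otimes\one)\delta$ simply does not occur in $(D^{\beta,2})^2+D^{\beta,1}D^{\beta,3}+D^{\beta,3}D^{\beta,1}$ restricted to $V\to S^3(V)$, since neither $D^{\beta,2}$ nor $D^{\beta,3}$ contains $\p^\beta_2$. (2) Drinfeld compatibility, which mixes $\delta\subset D^{\beta,1}$ with $\mu\subset D^{\beta,2}$, is the $S^2(V)\to S^2(V)$ component of the order-$\hbar^1$ relation $D^{\beta,1}D^{\beta,2}+D^{\beta,2}D^{\beta,1}=0$, not of the order-$\hbar^2$ relation. (3) Involutivity likewise sits in the $V\to V$ component of the order-$\hbar^1$ relation: $\mu\delta$ arises from $D^{\beta,2}D^{\beta,1}$, and the term that renders it $\p^\lin$-exact is the (anti)commutator of $\p^\lin$ with the $V\to V$ component of $D^{\beta,2}$, i.e.\ the genus-one, one-input--one-output operation. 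This is precisely the point stressed in the introduction (involutivity requires genus-one data); your parity fallback only covers the part of the homology in degrees $\equiv d_1\bmod 2$, as the paper notes after the definition, so it cannot substitute for this.

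Only the Jacobi identity genuinely uses the order-$\hbar^2$ relation, restricted to $S^3(V)\to V$, with the $S^3(V)\to V$ component of $D^{\beta,3}$ as the homotopy (here the vanishing of constant terms is what prevents further cross terms, as you anticipated). Once co-Jacobi, Drinfeld and involutivity are re-derived from the order-$\hbar^0$ and order-$\hbar^1$ relations as above, your argument matches the intended one and goes through, modulo the sign bookkeeping you already flagged.
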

A morphism between augmented BV$_\infty$-algebras $(A,D_A,\alpha)$ and
$(B,D_B,\beta)$ is a BV$_\infty$-morphism $\pHi$ which in addition
satisfies
$$
e^\alpha = e^\beta \circ e^\pHi.
$$
The following statement will be proven in \cite{CFL}.
\begin{prop}\label{prop:LieBi-mor}
Any morphism between augmented BV$_\infty$-algebras $(A,D_A,\alpha)$
and $(B,D_B,\beta)$ gives rise to a chain map for the linearized
complexes which induces a morphism of involutive Lie bialgebras in
homology. 
\end{prop}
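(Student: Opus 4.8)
Both augmented BV$_\infty$-algebras are free, say $A=S(V)$ and $B=S(W)$, and we are given a BV$_\infty$-morphism $\pHi:A\to B[[\hbar]]$ with $e^\alpha=e^\beta\circ e^\pHi$. The plan is to first trade the augmentations for the conjugated, constant-term-free picture, and then linearize. As in Section~\ref{sec:alg} let $\Phi^\alpha:A\to A[[\hbar]]$ and $\Phi^\beta:B\to B[[\hbar]]$ be the twisting isomorphisms, so that $D_A^\alpha:=\Phi^\alpha D_A(\Phi^\alpha)^{-1}$ and $D_B^\beta:=\Phi^\beta D_B(\Phi^\beta)^{-1}$ have no constant terms, each $\Phi$ is invertible and of exponential form, and $\pi_0\Phi^\alpha=e^\alpha$, $\pi_0\Phi^\beta=e^\beta$. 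Set
$$
e^{\pHi'}:=\Phi^\beta\circ e^\pHi\circ(\Phi^\alpha)^{-1}.
$$
Using $e^\pHi D_A=D_B e^\pHi$ one computes immediately that $e^{\pHi'}D_A^\alpha=D_B^\beta e^{\pHi'}$. The first point to establish is that exponential maps compose to, and are inverted within, exponential maps, with control of the $\hbar$-filtration; this is part of the formalism developed in~\cite{CFL}, and it guarantees that $e^{\pHi'}$ is the exponential of a unique linear map $\pHi':A\to B[[\hbar]]$ satisfying (BVmor1) and (BVmor3). Moreover $\pHi'$ has no constant term: precisely because $e^\alpha=e^\beta\circ e^\pHi$ we get
$$
\pi_0e^{\pHi'}=(\pi_0\Phi^\beta)\circ e^\pHi\circ(\Phi^\alpha)^{-1}=e^\beta\circ e^\pHi\circ(\Phi^\alpha)^{-1}=e^\alpha\circ(\Phi^\alpha)^{-1}=\pi_0,
$$
which says that $\pi_0\circ\pHi'$ vanishes on the augmentation ideal. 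Thus $\pHi'$ is a BV$_\infty$-morphism without constant term between the constant-term-free BV$_\infty$-algebras $(A,D_A^\alpha)$ and $(B,D_B^\beta)$.

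Now I would linearize, exactly as $D^\beta$ was linearized in Section~\ref{sec:alg}. Writing $\pHi'=\frac1\hbar\sum_k(\pHi')^k\hbar^k$, condition (BVmor3) forces the leading term $(\pHi')^1$ to be supported on $S^1(V)=V$, and (BVmor1) together with the no-constant-term property kills its $S^0(W)$-component; expanding $(\pHi')^1|_V=\sum_{k\geq1}\phi_k$ with $\phi_k:V\to S^k(W)$, put $\phi^\lin:=\phi_1:V\to W$. Extracting from $e^{\pHi'}D_A^\alpha=D_B^\beta e^{\pHi'}$ the $S^1\to S^1$ component at leading order --- the same extraction that produces the linearized differential $\p^\lin$ from $D^\beta$ --- one obtains $\phi^\lin\,\p^\lin_A=\p^\lin_B\,\phi^\lin$, so $\phi^\lin$ is a chain map $(V,\p^\lin_A)\to(W,\p^\lin_B)$ and descends to a linear map $H^\lin(A,D_A,\alpha)\to H^\lin(B,D_B,\beta)$. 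To see that it respects the structure of Proposition~\ref{prop:LieBi}, recall that the cobracket comes from the quadratic component $V\to S^2(V)$ of the twisted derivation and the bracket from the quadratic component $S^2(V)\to V$ of the twisted operator. Extracting the $S^1\to S^2$ and $S^2\to S^1$ components of $e^{\pHi'}D_A^\alpha=D_B^\beta e^{\pHi'}$ in the appropriate $\hbar$-orders yields identities of the schematic shape
$$
(\phi^\lin\otimes\phi^\lin)\,\delta_A-\delta_B\,\phi^\lin=h\,\p^\lin_A-\p^\lin_B\,h,\qquad \phi^\lin\,\mu_A-\mu_B\,(\phi^\lin\otimes\phi^\lin)=h'\,\p^\lin_A-\p^\lin_B\,h',
$$
where $h$ and $h'$ are built from the higher components $\phi_2$ and $(\pHi')^2$. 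Hence the cobracket and bracket are intertwined up to chain homotopy, and therefore strictly on homology. Since a morphism of Lie bialgebras is nothing but a linear map which is simultaneously a morphism of the underlying Lie and co-Lie structures, and since both sides are involutive Lie bialgebras by Proposition~\ref{prop:LieBi}, the induced map is a morphism of involutive Lie bialgebras.

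Conceptually, the cleanest way to organize the second step is to observe that a constant-term-free BV$_\infty$-operator linearizes to an ``infinity involutive Lie bialgebra'' structure on $V$, that $\pHi'$ linearizes to an $\infty$-morphism of such structures with leading term the chain map $\phi^\lin$, and that any such $\infty$-morphism induces, by the general homotopy theory, a morphism of the associated involutive Lie bialgebras on homology. The main obstacle is precisely this piece of algebra: setting up the exponential formalism so that conjugation stays within the relevant class and transports the augmentation data as above, and matching all signs and powers of $\hbar$ so that the higher components really do assemble into the homotopies $h$, $h'$. This is the content of~\cite{CFL}.
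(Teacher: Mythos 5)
The paper itself gives no proof of this proposition --- it is stated with the remark that it ``will be proven in \cite{CFL}'' --- so there is no argument of the paper to compare yours against. That said, your sketch is the natural extension of the linearization procedure the paper does carry out for $D^\beta$ in Section~\ref{sec:alg}, and the steps you make explicit check out: the conjugated map $e^{\pHi'}=\Phi^\beta e^\pHi(\Phi^\alpha)^{-1}$ does intertwine $D_A^\alpha$ and $D_B^\beta$, the computation $\pi_0 e^{\pHi'}=e^\beta e^\pHi(\Phi^\alpha)^{-1}=e^\alpha(\Phi^\alpha)^{-1}=\pi_0$ correctly uses the compatibility $e^\alpha=e^\beta\circ e^\pHi$ and $\pi_0\Phi^\alpha=e^\alpha$, and at leading order the extraction of the $V\to W$ component gives the strict chain-map identity $\phi^\lin\p^\lin_A=\p^\lin_B\phi^\lin$ (indeed on the nose, since products of augmentation-ideal elements have no linear component). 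The parts you defer --- that the class of exponentials of maps satisfying (BVmor1) and (BVmor3) is closed under composition and inversion with control of the $\hbar$-filtration, and that the $S^1\to S^2$ and $S^2\to S^1$ components of the intertwining relation assemble, with correct signs and $\hbar$-powers, into homotopies for cobracket and bracket --- are precisely the homotopy-algebraic content the paper also defers to \cite{CFL}, so flagging them rather than proving them is consistent with the level of rigor of the source. In short: no gap in the outline, but the genuinely technical work is named rather than done, exactly as in the paper.
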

\bigskip

The following concepts will become important in Section~\ref{sec:cob}.
\begin{definition}
A {\em Maurer-Cartan element} in a BV$_\infty$-algebra $(A,D)$ is an
element $a\in \frac 1 \hbar A[[\hbar]]$ of degree zero satisfying
$$
D(e^a)=0.
$$
\end{definition}
Given a Maurer-Cartan element $a\in \frac 1 \hbar A[[\hbar]]$, one
defines the {\em twisted BV$_\infty$-operator} 
$$
D_a:A[[\hbar]] \to A[[\hbar]], \qquad b \mapsto e^{-a}D(e^ab).
$$
One can check that $D_a$ indeed maps between the spaces given and
$(A,D_a)$ is again a BV$_\infty$-algebra. In the special case that 
the original BV-operator $D$ had the property that $D^k=0$ for all
$k\geq 3$, the condition $D(e^a)=0$ simplifies to
\begin{equation}\label{eq:MC}
Da + \frac \hbar 2 [a,a]_D = 0.
\end{equation}
\comment{
In this case, $D_a$ can be written as
$$
D_a(b)= Db + \hbar [a,b]_D.
$$
In particular, $D_a-D$ is a derivation, so that $D_a$ generates the
same bracket as $D$.
}
{\small 
\subsubsection*{Technical remarks}

The proper concept of BV$_\infty$-algebra is more general than the one
presented here; it is discussed by Tamarkin and Tsygan~\cite{TT}. What
we consider here is essentially equivalent to what
Kravchenko~\cite{Kr} calls homotopy BV-algebras.  

In the augmented free case it is more appropriate to discuss
everything in terms of involutive Lie bialgebras up to homotopy, since
in this case our BV$_\infty$-algebras are just cobar constructions on
the co-Lie part of the structure. The relevant algebra will be
developed in~\cite{CFL}. 


However, the object that arises naturally from the SFT of a contact
manifold without filling is only a weak version of such involutive
BL$_\infty$ algebras, meaning that in general one does have $n$-to-$0$
operations. We do not know how to properly treat them on the linear
level, i.e. without going to the cobar construction.

One essential aspect which is completely missing here is the discussion
of homotopies. These are necessary to properly formulate the invarince
properties of the various algebraic objects constructed in both SFT
and string topology, see~\cite{CFL}.}

\section{Back to geometry}\label{sec:geo}

In this section, we take another look at SFT and string topology and
describe them using the language developed in the previous section.

We have seen in Section~\ref{sec:SFT} that to a closed contact
manifold $(Y^{2n-1},\lambda)$ and a suitable choice of almost complex
structure on $\R \x Y$ one can associate an element $\H \in \frac 1
\hbar \WW$, where $\WW$ is the graded Weyl algebra constructed
from the (good) closed Reeb orbits of $\lambda$. In particular, $\WW$
contains the graded commutative subalgebra $\AA$ of polynomials in the
variables $q_\gamma$. 

We claim that the differential operator $\D=\D_\SFT:=
\overrightarrow{\H}: \AA[[\hbar]] \to \AA[[\hbar]]$ defined as in
Section~\ref{sec:SFT} via the replacements
$$
   p_\gamma\mapsto \kappa_\gamma \hbar 
       \overrightarrow{\frac{\p}{\p q_\gamma}}
$$
gives $\AA$ the structure of a BV$_\infty$-algebra as defined in the
previous section. Indeed, the star product of the Weyl algebra exactly
models the behavior of our differential operators under composition,
so that the condition $\D\D=0$ is a direct translation of $\H * \H=0$. 
Next note that each differentiation contributes one power of $\hbar$,
so that the coefficient of $\hbar^{k-1}$ in the expansion of $\D$ is a
differential operator of order $\leq k$. Finally the condition $\D(1)=0$
follows from the fact that there are no curves without positive
punctures in $\R\times Y$, so every term of $\H$ contains at
least one $p$ and $\D$ does not have any zero order terms as a
differential operator. 

Tracing through the definitions, one finds that the holomorphic curves
which correspond to the term $\D^k$ in the expansion of $\D$ are
precisely those for which the number of positive punctures and the
genus add up to $k$. In particular, we find that $\D^1$ corresponds to
curves of genus zero with exactly one positive puncture, so that
$H_*(\AA,\D_\SFT^1)$ is just the usual contact homology of $Y$. The
BV-operator $\D^2$ consists of two terms: a differential operator of order two
corresponding to genus zero curves with two positive punctures and a
derivation corresponding to curves of genus one with one positive
puncture. Note that, in particular, the corresponding BV-bracket is
determined just by the genus zero curves.

Next we discuss the morphism arising from an exact symplectic
cobordism $(X,\om)$ from $(Y^+,\lambda^+)$ to
$(Y^-,\lambda^-)$. Indeed, the potential $\F$ of the cobordism $X$ is
a power series in the variables $p^+_\gamma$ and $\hbar$ with coefficients
polynomial in the $q^-_\gamma$. Analogously to the above discussion
for $\H$, it gives rise to a map $\overrightarrow{\F}: \AA^+ 
\to \AA^-[[\hbar]]$. We claim that this map is a BV$_\infty$-morphism
as defined in the previous section. 

Again each term of $\F$ contains at least one
$p^+$, so that we have $\overrightarrow{\F}(1)=0$. Simple algebraic
manipulations show that the master equation of
Theorem~\ref{thm:master-F} is equivalent to the fact that
$$
e^{\overrightarrow \F} \D^+ = \D^- e^{\overrightarrow \F}.
$$
The statement about the order of the differential operators in the
expansion of $\overrightarrow{\F}$ with respect to $\hbar$ follows as
in the case of $\D$. Also, as for $\D$, the curves contributing to
$\F^k$ are precisely those for which the number of positive punctures
and the genus add up to $k$. In particular, the chain map yielding a
morphism between the contact homologies is given by curves of genus
zero with exactly one positive puncture.

Augmentations arise in a geometric way by considering symplectic
cobordisms with only a positive end, since the BV$_\infty$-algebra
associated to the empty contact manifold is $(\K,0)$. From
Proposition~\ref{prop:LieBi} we then obtain 
\begin{thm}\label{thm:filling_linear}
Every exact symplectic filling $(X,d\lambda)$ of a closed contact
manifold $(Y^{2n-1},\lambda)$ gives rise to a linearized contact
homology $HC^\lin(X,\lambda)$ which has the structure of an involutive
Lie bialgebra of bidegree $(-1,-2(n-3)-1)$.
\end{thm}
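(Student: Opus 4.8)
The plan is to assemble Theorem~\ref{thm:filling_linear} entirely from the algebraic machinery already set up, so that essentially nothing analytic beyond Theorems~\ref{thm:master-H} and~\ref{thm:master-F} is needed. First I would take the contact manifold $(Y^{2n-1},\lambda)$ and recall from the beginning of Section~\ref{sec:geo} that the choice of a cylindrical almost complex structure on $\R\times Y$ produces $\H\in\frac1\hbar\WW$, and that the induced differential operator $\D_\SFT=\overrightarrow{\H}$ makes $(\AA,\D_\SFT)$ a BV$_\infty$-algebra, where $\AA$ is the polynomial algebra on the variables $q_\gamma$; note $\AA=S(V)$ is free on the graded vector space $V$ spanned by the $q_\gamma$, so this is a \emph{free} BV$_\infty$-algebra in the sense required by Proposition~\ref{prop:LieBi}. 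The three axioms (BV1)--(BV3) were already verified there: $\D\D=0$ translates $\H*\H=0$ via Theorem~\ref{thm:master-H}, the order bound on $\D^k$ comes from counting that each $p$-differentiation costs one power of $\hbar$, and $\D(1)=0$ holds because every holomorphic curve in a symplectization has at least one positive puncture.

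Next I would produce the augmentation. Here the key observation is that an exact symplectic filling $(X,d\lambda)$ of $(Y,\lambda)$ is precisely an exact symplectic cobordism from $Y^+=Y$ to the empty contact manifold $Y^-=\emptyset$. The BV$_\infty$-algebra attached to $\emptyset$ is the trivial one $(\K,0)$, since there are no Reeb orbits, so $\AA^-=\K$. By the discussion following Theorem~\ref{thm:master-F}, the potential $\F$ of $X$ gives a BV$_\infty$-morphism $\overrightarrow{\F}\colon\AA\to\K[[\hbar]]$: axiom (BVmor1) holds because every curve in $X$ has a positive puncture (it must exit through the convex end, as $X$ has no concave end), axiom (BVmor2) is the restatement $e^{\overrightarrow{\F}}\D^+=\D^-e^{\overrightarrow{\F}}$ of the master equation~\eqref{eq:master-F} with $\H^-=0$, and axiom (BVmor3) is again the $\hbar$-degree count. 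Thus $\beta:=\overrightarrow{\F}$ is by definition an augmentation of the free BV$_\infty$-algebra $(\AA,\D_\SFT)$.

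With an augmented free BV$_\infty$-algebra $(\AA=S(V),\D_\SFT,\beta)$ in hand, Proposition~\ref{prop:LieBi} applies verbatim: the twisting isomorphism $\Phi=\Phi^\beta$ conjugates $\D_\SFT$ to an operator $\D_\SFT^\beta$ with no constant terms, whose leading linear piece $\p^\lin=\p^\beta_1\colon V\to V$ squares to zero and defines the linearized homology, which I would \emph{define} to be $HC^\lin(X,\lambda):=H^\lin(\AA,\D_\SFT,\beta)$. The cobracket $\delta$ extracted from $\p^\beta_2\colon V\to S^2(V)$ and the bracket $\mu$ extracted from $\D^{\beta,2}_1\colon S^2(V)\to V$ then descend to this homology and give it the structure of an involutive Lie bialgebra of bidegree $(-1,-2(n-3)-1)$, exactly as asserted. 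So the proof is: identify the data, check that ``filling = cobordism to the empty set'', and invoke Proposition~\ref{prop:LieBi}.

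The one genuine point that deserves care --- and what I expect to be the main obstacle --- is the well-definedness, i.e. independence of $HC^\lin(X,\lambda)$ and its bialgebra structure from the auxiliary choices ($J$ on $\R\times Y$, $J$ on $X$, abstract perturbations) up to isomorphism of involutive Lie bialgebras. This is not a formal consequence of Proposition~\ref{prop:LieBi} as stated, because that proposition takes a \emph{fixed} augmented BV$_\infty$-algebra as input; what one needs is that different choices yield BV$_\infty$-isomorphic augmented algebras and that Proposition~\ref{prop:LieBi-mor} then delivers the isomorphism on linearized homology. At the technical level this is the chain-level homotopy theory of BV$_\infty$-structures deferred to~\cite{CFL}, and it also rests on the conjectural transversality and gluing package for SFT. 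In the exact category considered here one can at least say that the formal/algebraic part of the invariance argument is clean; I would state the theorem as a consequence of the algebra while flagging in the technical remarks that the full invariance statement depends on~\cite{CFL} and on the SFT foundations.
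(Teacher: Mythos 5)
Your proposal follows exactly the paper's own route: the Hamiltonian $\H$ makes $(\AA,\D_\SFT)$ a free BV$_\infty$-algebra, the filling is viewed as a cobordism to the empty contact manifold so that its potential $\F$ gives an augmentation $\overrightarrow{\F}\colon\AA\to\K[[\hbar]]$ via the master equation of Theorem~\ref{thm:master-F}, and Proposition~\ref{prop:LieBi} then yields the involutive Lie bialgebra structure on the linearized homology. Your closing caveat about independence of choices is consistent with how the paper itself handles this, deferring the chain-level homotopy theory to~\cite{CFL} and noting that the proof uses only the algebraic properties of SFT from~\cite{EGH}.
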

To get a feeling for this invariant of the symplectic filling, it may be
instructive to describe the operations involved in the definition of
linearized contact homology in terms of the broken holomorphic curves
they count. 

We start by looking at the boundary operator $\p^\lin$ of linearized
contact homology. We claim that it counts configurations of curves as
depicted in Figure~\ref{fig:5}. 
\begin{figure}[ht]
\begin{center}
\epsfbox{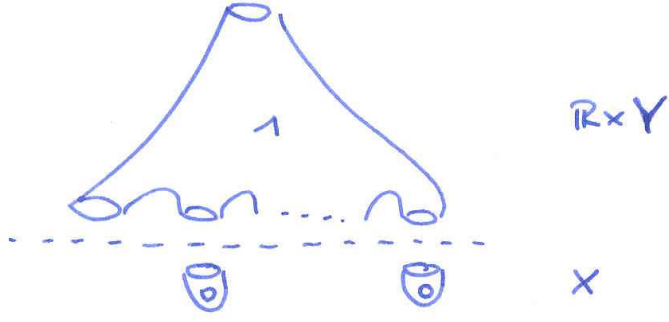}
\caption{The linearized boundary operator}
\label{fig:5}
\end{center}
\end{figure}
Here the numbers written in the various non-trivial components denote
the index of the corresponding family of curves. To understand this
picture, note first that the augmentation is given by the potential
$\F$ of the filling. In particular, the augmentation
$e^{\overrightarrow{\F^1}}:(\AA, \D^1_\SFT) \to \Q$ of the underlying
differential graded algebra is determined by the count of finite
energy planes in $X$ of index 0. The algebra automorphism $\Phi^0:\AA
\to \AA$ is given on generators as $\Phi^0(q_\gamma) = q_\gamma +
\overrightarrow{\F^1}(q_\gamma)$. Recall that $\p^\lin= \Phi^0 \D^1
(\Phi^0)^{-1}$, so that
$$
\p^{\overrightarrow{\F}}(q_\gamma) = 
  \Phi^0 \D^1 (q_\gamma - \overrightarrow{\F^1}(q_\gamma)) = \Phi^0
  \D^1(q_\gamma). 
$$ 
Now one can see that the linear part $\p^\lin$ corresponds precisely
to configurations as in Figure~\ref{fig:5}. 
%
%
We remark in passing that
the constant term in $\p^{\overrightarrow{\F}}(q_\gamma)$ counts exactly the
rigid configurations of the type pictured in Figure~\ref{fig:6}. 
\begin{figure}[h]
\begin{center}
\epsfbox{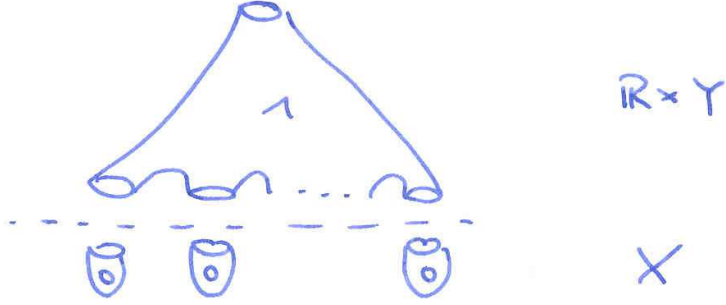}
\caption{The constant term}
\label{fig:6}
\end{center}
\end{figure}

The fact that it is zero corresponds to the geometric fact that these
configurations together form the oriented boundary of the
corresponding moduli space of finite energy planes in $X$ of index 1
asymptotic to $\gamma$. 

The same reasoning as for the linearized boundary operator $\p^\lin$
shows that the coefficients of the cobracket on linearized homology
are given by the counts of configurations as depicted in
Figure~\ref{fig:7}. 
\begin{figure}[h]
\begin{center}
\epsfbox{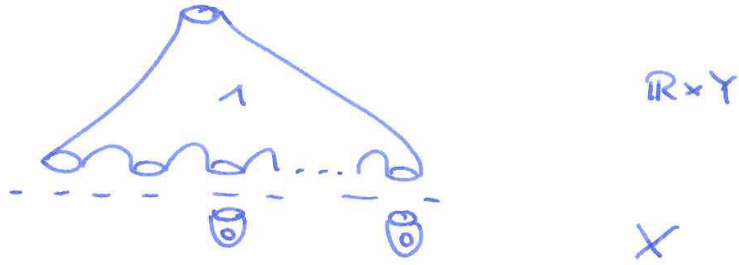}
\caption{The cobracket on linearized homology}
\label{fig:7}
\end{center}
\end{figure}

Something more interesting happens for the bracket,
where we get the two types of pictures seen in Figure~\ref{fig:8}. 
\begin{figure}[h]
\begin{center}
\epsfbox{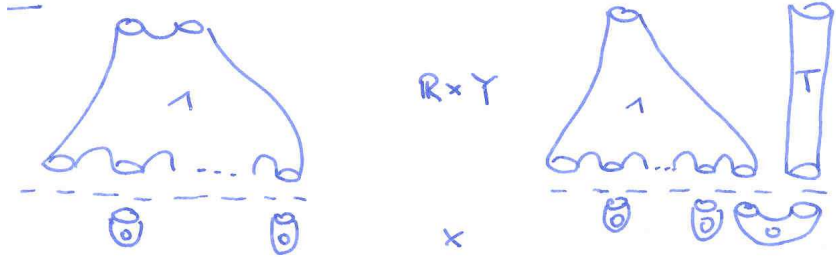}
\caption{The bracket on linearized homology}
\label{fig:8}
\end{center}
\end{figure}

\bigskip

We next turn to string topology. Here things are somewhat more
complicated, as the operations $\Delta$ and $\nabla$ are only
partially defined. Still the compatibility conditions listed in
Proposition~\ref{prop:string} suggest that $\D_\str:= \p + \Delta +
\hbar \nabla: \chains[[\hbar]] \to \chains[[\hbar]]$ is reasonably
viewed as a (partially defined) BV$_\infty$-operator on $\chains$. 
Note that in our grading conventions it is indeed homogeneous of
degree $-1$.

Now assume $(X,d\lambda)$ is an exact filling of the contact manifold
$(Y,\lambda)$ and $Q \subset X$ is an exact Lagrangian. In this
situation the master equation of Theorem~\ref{thm:master-L} simplifies
to
$$
\D_\str e^\L = e^\L \overleftarrow{\H^+}.
$$
So replacing $\F$ by $\L$ in the discussion before, we find that
$\overrightarrow{\L}:\AA \to \CC[[\hbar]]$ defines a
BV$_\infty$-morphism between the BV$_\infty$-algebra $(\AA,\D_\SFT)$
of the contact manifold $(Y,\lambda)$ and the (partial) BV$_\infty$-algebra 
$(\CC, \D_\str)$ constructed from the string topology of $Q$.  

One can actually do better. Split $\L=\F + \L'$, where $\F$ is the
potential of the filling $(X,d\lambda)$ of $Y$, i.e. the part of $\L$
corresponding to curves with no boundary. It gives an
augmentation $\overrightarrow{\F}$ for $(\AA,\D_\SFT)$, leading 
to the twisted BV$_\infty$-operator 
$$
\D^\F_\SFT := \overrightarrow{e^{\F}} \D_\SFT \overrightarrow{e^{-\F}}.
$$
on $\AA$. As discussed before, this twisted BV$_\infty$-operator does
not have constant terms, so it admits a linearization. 
Moreover, denoting by $\AA^1\subset \AA$ the linear
polynomials in the $q$'s, i.e. the vector space generated by the good
closed Reeb orbits, we obtain
\begin{thm}\label{thm:filling_mor}
The part $\L^1_1$ of $\L'$ corresponding to curves with one puncture
and one boundary component gives rise to a chain map from $\AA^1$ to
$\CC^1=C_*(\Sigma Q, Q)$ which induces a morphism of involutive
graded Lie bialgebras
$$
HC^{\lin}(X,\lambda) \to H(\Sigma Q, Q).
$$
\end{thm}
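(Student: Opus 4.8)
The plan is to derive the theorem from the abstract machinery of Section~\ref{sec:alg}, after recasting the geometry: the filling $X$ as an \emph{augmented} BV$_\infty$-algebra, the string topology of $Q$ as a BV$_\infty$-algebra, and the curve counts relating them as a BV$_\infty$-morphism. First I would record, as sketched in Section~\ref{sec:geo}, that $(\AA,\D_\SFT)$ is a BV$_\infty$-algebra and that $\overrightarrow{\F}$ is an augmentation of it: for a filling ($Y^-=\emptyset$, $\H^-=0$) the master equation of Theorem~\ref{thm:master-F} reads $e^{\overrightarrow{\F}}\D_\SFT=0$, which is exactly condition (BVmor2) for an augmentation, while $\overrightarrow{\F}(1)=0$ because there are no finite-energy curves in $X$ without punctures. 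Next I would check that $(\CC,\D_\str)$, with $\D_\str=\p+\Delta+\hbar\nabla$, is a (partially defined) BV$_\infty$-algebra carrying the \emph{trivial} augmentation: the relation $\D_\str^2=0$ expands precisely into the seven identities of Proposition~\ref{prop:string}; the order condition (BV2) says that $\p+\Delta$ is a derivation (the derivation property of $\p$ together with part (iii)) and that $\nabla$ has order $\le 2$ (the seven-term relation (vi)); and (BV3) is $\p(\one)=\Delta(\one)=\nabla(\one)=0$. Since $\Delta$ raises and $\nabla$ lowers the number of string factors, $\D_\str$ has no constant terms and is already in the normal form to which the linearization of Section~\ref{sec:alg} applies.

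Then I would identify $\overrightarrow{\L}$ as a morphism of augmented BV$_\infty$-algebras from $(\AA,\D_\SFT,\overrightarrow{\F})$ to $(\CC,\D_\str,0)$. Condition (BVmor2), $e^{\overrightarrow{\L}}\D_\SFT=\D_\str\,e^{\overrightarrow{\L}}$, is the operator translation of the master equation $\D_\str\,e^{\L}=e^{\L}\overleftarrow{\H^+}$ (the filling case of Theorem~\ref{thm:master-L}), obtained by the same manipulation that turns Theorem~\ref{thm:master-F} into $e^{\overrightarrow{\F}}\D^+=\D^-e^{\overrightarrow{\F}}$. Condition (BVmor1), $\overrightarrow{\L}(1)=0$, holds because a nonconstant holomorphic curve with boundary on the exact Lagrangian $Q$ and no positive puncture would enclose zero symplectic area and hence cannot exist. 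Condition (BVmor3), the order bound, holds because each substitution $p^+_\gamma\mapsto\kappa_\gamma\hbar\,\overrightarrow{\p/\p q^+_\gamma}$ produces one power of $\hbar$, so the curves contributing to $\overrightarrow{\L}^k$ are those whose number of positive punctures plus genus equals $k$. Finally the augmentations match: the projection $\pi_0\colon\CC\to\Q$ is an algebra map with $\pi_0\circ\overrightarrow{\L}=\overrightarrow{\pi_0\L}=\overrightarrow{\F}$, since the part of $\L$ with no boundary components is $\F$; hence $e^{0}\circ e^{\overrightarrow{\L}}=\pi_0\,e^{\overrightarrow{\L}}=e^{\overrightarrow{\F}}$. (Equivalently, one first passes to the twisted operator $\D^\F_\SFT$, which has no constant terms, and the correspondingly normalized morphism $\overrightarrow{\L'}$.)

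By Proposition~\ref{prop:LieBi-mor} this morphism induces a chain map of the linearized complexes and, on homology, a morphism of involutive graded Lie bialgebras; it remains to identify these with the objects in the statement. On the SFT side the linearized complex is $\AA^1$ and its homology is by definition $HC^\lin(X,\lambda)$, with the bialgebra structure of Theorem~\ref{thm:filling_linear}. On the string side the augmentation is trivial, so $\Phi^0=\id$ and $\p^\lin$ is just the restriction of $\D_\str^1=\p+\Delta$ to $\CC^1=C_*(\Sigma Q,Q)$, followed by projection back to $\CC^1$, i.e.\ the singular boundary operator $\p$; thus the linearized homology is $H(\Sigma Q,Q)$, and the induced cobracket and bracket are the components of $\Delta$ and $\nabla$ that split one string at a self-intersection resp.\ glue two strings, which by Theorem~\ref{thm:string} are the Chas--Sullivan operations $\delta$ and $\mu$. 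Unwinding the construction of the linearized chain map from Section~\ref{sec:alg}, it equals $\overrightarrow{\L}^1$ restricted to $\AA^1$ and projected to $\CC^1$ (the twist by the translation $\Phi^0_\AA$ being absorbed because $\overrightarrow{\L}$ annihilates $q$-independent inputs); by the $\hbar$-count this $\overrightarrow{\L}^1$ is the contribution of genus-$0$ curves with one positive puncture, whose $k=1$ part is exactly $\L^1_1$. This gives the theorem.

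The sign bookkeeping needed to make the master equation literally coincide with (BVmor2) and $\L^1_1$ literally coincide with the linearized chain map is somewhat delicate but routine given the conventions of Sections~\ref{sec:string} and~\ref{sec:alg}. The real obstacle is twofold. First, the argument rests entirely on Theorem~\ref{thm:master-L}, which is conjectural: it requires SFT-type compactness, transversality and gluing in the presence of Lagrangian boundary---including the new ``self-gluing at a boundary self-intersection'' phenomenon---and moreover the chains produced by the moduli spaces must be transverse enough, in the jet-transversality sense underlying Proposition~\ref{prop:string}, for every string operation occurring in the master equation to be defined. Second, the formalism of Section~\ref{sec:alg} is stated for genuinely \emph{free} BV$_\infty$-algebras, whereas $\CC$ is not free and $\Delta,\nabla$ are only partially defined; making Proposition~\ref{prop:LieBi-mor} literally applicable here---in particular making sense of diagonal terms such as $\nabla(\L\cdot\L)$ and treating coherent orientations consistently---requires the chain-level string topology of~\cite{CL:string} and the transversality and orientation package of~\cite{CL:exact}.
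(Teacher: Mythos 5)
Your proposal follows essentially the same route as the paper: specialize the master equation of Theorem~\ref{thm:master-L} to the filling case so that $\overrightarrow{\L}$ becomes a BV$_\infty$-morphism from $(\AA,\D_\SFT)$ to the (partial) string BV$_\infty$-algebra $(\CC,\D_\str)$, split $\L=\F+\L'$ so that $\F$ supplies the augmentation and twisting on the SFT side, and then invoke the linearization machinery and Proposition~\ref{prop:LieBi-mor} to identify the induced chain map with $\L^1_1$. Your caveats (conjectural status of the master equation, partially defined string operations, non-freeness of $\CC$) match the paper's own technical remarks, which defer a complete, more direct proof to \cite{CL:exact}.
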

There should be a more conceptual form of Theorem~\ref{thm:filling_mor}
giving a morphism of the chain-level structures.

{\small 
\subsubsection*{Technical remarks}

The proof of Theorem~\ref{thm:filling_linear} uses only the algebraic
properties of SFT described in \cite{EGH}. While
Theorem~\ref{thm:filling_mor} is a similar consequence of
Theorem~\ref{thm:master-L}, in \cite{CL:exact} we will provide a
complete, more direct proof which is independent of the master equation.
}

\section{Cotangent bundles}\label{sec:cot}

Here we discuss the important special case of cotangent bundles.
More precisely, let $X= T^*Q$ with its canonical symplectic form and
$Q \subset X$ be the zero section. In this context,
Theorem~\ref{thm:filling_mor} yields a morphism of involutive graded Lie
bialgebras 
$$
\psi:HC^\lin(T^*Q, \lambda) \to H(\Sigma Q, Q).
$$
The main result of this section is
\begin{thm}\label{thm:iso}
The map $\psi:HC^\lin(T^*Q, \lambda) \to H(\Sigma Q, Q)$ is an
isomorphism of involutive graded Lie bialgebras. 
\end{thm}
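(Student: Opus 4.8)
The plan is to identify both sides of the map $\psi$ with appropriate versions of the homology of the free loop space and to check that $\psi$ intertwines these identifications. First I would invoke the known chain-level isomorphism between the linearized contact homology of $T^*Q$ and the $S^1$-equivariant (or rather ``string'') homology of $\Lambda Q$. Concretely, linearized contact homology $HC^\lin(T^*Q,\lambda)$ is computed by a differential graded complex generated by good closed Reeb orbits of the geodesic flow on $S^*Q$, with differential counting index-$1$ punctured curves in the symplectization together with their cappings in $T^*Q$ (cf.\ Figure~\ref{fig:5}); on the other side $H(\Sigma Q,Q)$ is the shifted relative homology of the string space. The key external input I would use is the analogue for the symplectization setting of the Viterbo/Salamon--Weber/Abbondandolo--Schwarz isomorphism~\eqref{eq:floer}: closed Reeb orbits correspond to closed geodesics, the contact-homology differential corresponds to the Morse differential for the energy functional on $\Lambda Q$, and passing to the $S^1$-quotient (and relative to constants) on the loop-space side matches the ``linearization plus good-orbit'' bookkeeping on the SFT side. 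This gives an \emph{a priori} isomorphism of the underlying graded vector spaces.

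Next I would show that $\psi$ realizes precisely this identification. The map $\psi$ is induced by the chain map $\L^1_1$ of Theorem~\ref{thm:filling_mor}, which counts holomorphic disks in $T^*Q$ with one puncture asymptotic to a Reeb orbit and one boundary loop on $Q$, evaluated at the boundary into $\Sigma Q$. The point is that for $X=T^*Q$ and $Q$ the zero section, these are exactly the curves underlying the Floer-theoretic isomorphism, so $\psi$ is chain-homotopic to (the $S^1$-quotient of) the standard comparison map. Here I would use the filtration argument alluded to in the introduction: both $HC^\lin(T^*Q,\lambda)$ and $H(\Sigma Q,Q)$ carry an action filtration (by the length/energy of orbits resp.\ strings), $\psi$ respects it, and on the associated graded it reduces to a manifestly invertible map — on the lowest filtration level it is the isomorphism $H_{*+3-n}(\mathrm{pt}) \oplus (\text{first nonconstant stratum})$, and inductively one climbs the filtration. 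A standard spectral-sequence / five-lemma argument then upgrades ``iso on associated graded'' to ``iso''.

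Finally, the algebraic structure. By Theorem~\ref{thm:filling_mor}, $\psi$ is already a morphism of involutive graded Lie bialgebras, so there is nothing to check there once bijectivity is established: an isomorphism of vector spaces that is simultaneously a bialgebra morphism is automatically an isomorphism of involutive Lie bialgebras. Thus the whole proof reduces to the bijectivity statement of the previous two paragraphs.

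\textbf{Main obstacle.} The hard part will be the comparison isomorphism itself in the \emph{punctured}, $S^1$-equivariant setting: the cited loop-space results~\eqref{eq:floer} are for Floer homology of $T^*Q$ and the non-equivariant free loop space, whereas here one needs the version where Reeb orbits replace Hamiltonian orbits, one works $S^1$-equivariantly (closed strings rather than loops), relative to constants, and the matching must be compatible with the \emph{full} bialgebra structure, not just the vector-space identification. Establishing that the curve counts defining $\psi$ agree on the nose with the Morse-theoretic counts on $\Lambda Q/S^1$ — including the delicate treatment of multiply covered orbits/strings, the relative-to-constants quotient, and coherent orientations — is where the substantive analytic and combinatorial work lies; the filtration and algebraic steps are comparatively routine. (As the paper notes, the complete argument is deferred to~\cite{CL:exact}.)
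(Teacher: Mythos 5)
Your outer skeleton --- action/length filtrations on both sides, a Five Lemma comparison, and the observation that the bialgebra structure comes for free from Theorem~\ref{thm:filling_mor} once bijectivity is known --- is indeed the strategy the paper follows. But the core of your argument is displaced onto an input you yourself concede is unavailable: you propose to identify $\psi$ with an $S^1$-equivariant analogue of the Viterbo/Salamon--Weber/Abbondandolo--Schwarz isomorphism~\eqref{eq:floer} and then assert that on the associated graded $\psi$ is ``manifestly invertible''. Neither step is substantiated, and the paper does not argue this way: no pre-existing comparison map is invoked at all. Instead, the entire substance of the proof is Lemma~\ref{lem:action-estimate}: for the specific almost complex structure $J_\rho$, every holomorphic curve in $T^*Q$ with positive punctures at geodesics $\gamma_j$ and boundary loops $\sigma_i$ on $Q$ satisfies $\sum_i\length(\sigma_i)\le\sum_j\length(\gamma_j)$ (Stokes' theorem applied to the form $p\,dq/|p|$), with equality precisely for branched covers of half-cylinders over geodesics. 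This one statement does the two jobs your proposal leaves open: it shows that the chain map $\L^1_1$ respects the filtrations, and it computes the filtration-preserving leading term --- the only one-punctured disks whose boundary length equals the action of the asymptotic orbit $\gamma$ are the half-cylinders over $\gamma$, so on the associated graded $\psi$ is the tautological map sending a Reeb orbit to the underlying geodesic string, which is compared to local Morse homology of the energy functional. Without this (or a substitute for it) your filtration argument has no identified diagonal term to feed into the Five Lemma; as written, the ``a priori isomorphism'' you invoke is exactly what has to be proved. (Your description of the lowest filtration level is also off: one works relative to the constant strings, so there is no $H_*(\mathrm{pt})$ summand to match.)

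A second concrete omission is the good/bad orbit bookkeeping, which the paper flags as the one important point in the comparison of associated graded pieces: a Reeb orbit on $S^*Q$ is bad exactly when the stable bundle of the corresponding $S^1$-family of geodesics, viewed as a critical manifold of the energy functional, is non-orientable, and in that case the local Morse homology vanishes by Rademacher~\cite{Ra}, so such orbits contribute to neither side. Your phrase about the ``delicate treatment of multiply covered orbits/strings'' gestures at this but supplies no statement; without it the graded pieces of the two filtrations need not agree, and the Five Lemma argument does not close.
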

The proof of this theorem is based on an action/length estimate for
holomorphic curves. For this, we pick a Riemannian metric on $Q$ and
let $Y=S^*Q$ be the unit cotangent bundle for this metric with the
canonical 1-form $\lambda=p\,dq$. Then closed Reeb orbits $\gamma$ on
$(Y,\lambda)$ are lifts of closed geodesics on $Q$ (again denoted by
$\gamma$) and their action $\int_\gamma\lambda$ equals the length
$\length(\gamma)$ of the geodesic. 

We pick a compatible almost complex structure $J_\rho$ on $T^*Q$ of
the form 
$$
   J_\rho:\frac{\p}{\p q_i}\mapsto -\rho(|p|)\frac{\p}{\p p_i},\qquad
   \rho(|p|)\frac{\p}{\p p_i}\mapsto\frac{\p}{\p q_i},
$$
where $q_i$ are geodesic normal coordinates on $Q$ and $p_i$ are dual
coordinates. Here $\rho:[0,\infty)\to(0,\infty)$ is a non-decreasing
function with $\rho(r)\equiv 1$ near $r=0$ and $\rho(r)=r$ for $r$
large. For this almost complex structure, every $T$-periodic geodesic
$\gamma$ gives rise to two natural $J_\rho$-holomorphic curves: the
{\em cylinder over $\gamma$}
$$
   \R\times\R/\Z\to T^*Q,\qquad
   (s,t)\mapsto\Bigl(\gamma(Tt),g(s)\dot\gamma(Tt)\Bigr), 
$$
where $g:\R\to\R$ is the solution of $g'(s)=T\rho(s)$ with $g(0)=0$,
and its restriction to $[0,\infty)\times\R/\Z$ which we call the {\em
half-cylinder over $\gamma$}. 

\begin{lemma}\label{lem:action-estimate}
Let $J_\rho$ be as above. Then for any $J_\rho$-holomorphic curve $f$
in $T^*Q$ with finitely many positive punctures asymptotic to closed
geodesics $\gamma_1,\dots,\gamma_s$ and with finitely many boundary
loops $\sigma_1,\dots,\sigma_k$ on $Q$ we have 
$$
\sum_{i=1}^k\length(\sigma_i) \leq \sum_{j=1}^s\length(\gamma_j).
$$
Equality holds if and only if $f$ is a branched cover of the
half-cylinder over a closed geodesic.  
\end{lemma}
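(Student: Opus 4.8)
The plan is to exploit the fact that $J_\rho$ is adapted to the symplectization structure on $T^*Q\setminus Q$ together with Stokes' theorem, in the spirit of the standard energy/action estimates in SFT. First I would set up notation: write $T^*Q$ with the canonical $1$-form $\lambda=p\,dq$, so that away from the zero section $\lambda$ restricts to a contact form on each level set $\{|p|=r\}$ and $d\lambda=\omega$ is the symplectic form. The key point about $J_\rho$ is that it is cylindrical of the given form, so that for any $J_\rho$-holomorphic curve $f$ the restriction of $f^*\omega$ to the domain is pointwise non-negative; moreover $f^*(d\lambda)=f^*\omega\geq 0$, and this $2$-form is strictly positive except where $f$ is tangent to the distribution spanned by $\partial/\partial q_i$ and $\partial/\partial p_i$ in the ``vertical'' directions — which is exactly the locus where $df$ maps into the span of $\partial_s$ and $\partial_t$ of a cylinder over a geodesic.

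Second, I would apply Stokes' theorem on the domain $\dot S$ (the punctured Riemann surface), compactified by adding boundary circles at the punctures. The boundary of this compactified surface consists of the $k$ boundary loops $\sigma_i$ (mapping to $Q\subset T^*Q$, where $\lambda$ restricts to an exact form $dh$ since $Q$ is exact — in fact for the zero section $\lambda|_Q=0$) together with the $s$ asymptotic circles at the positive punctures (each contributing $\int_{\gamma_j}\lambda=\length(\gamma_j)$ with the correct sign, since positive punctures are convex ends). Then
$$
0\leq \int_{\dot S}f^*d\lambda = \sum_{j=1}^s\int_{\gamma_j}\lambda - \sum_{i=1}^k\int_{\sigma_i}\lambda|_Q = \sum_{j=1}^s\length(\gamma_j) - \sum_{i=1}^k\int_{\sigma_i}\lambda|_Q.
$$
Since $\lambda|_Q=0$ on the zero section (or at worst exact, so the loop integrals vanish for closed loops $\sigma_i$), the subtracted sum is zero — but this gives $\sum\length(\gamma_j)\geq 0$, which is too weak. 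So the real work is to relate $\sum_i\length(\sigma_i)$ to the integral: I would instead use that the $\sigma_i$ are themselves (after passing to the string space) only loops on $Q$, and the length bound must come from a different mechanism, namely that the curve $f$ sweeps out ``enough area'' to connect the boundary loops to the asymptotic orbits. Concretely, I expect one should cut $T^*Q$ along a level set $\{|p|=\epsilon\}$, apply the monotonicity of the symplectic area under the $\R$-translation flow, and use that a holomorphic curve passing from the region near $Q$ out to the asymptotic geodesics must have symplectic area bounded below in terms of $\sum\length(\gamma_j)$ and above in terms of a primitive controlling $\sum\length(\sigma_i)$; alternatively, one interpolates using the isoperimetric-type inequality relating the length of a loop on $Q$ to the area of any disk it bounds in $T^*Q$.

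The main obstacle will be the precise form of this length estimate and in particular pinning down the equality case. For the inequality, the cleanest route is probably: choose a primitive near $Q$ and use that $f^{-1}(Q)$ is exactly the boundary, then write the area as $\sum_j\length(\gamma_j)$ minus a non-negative term, and separately bound each $\length(\sigma_i)$ by the area of the portion of $f$ lying in a collar of $Q$ — this is where the specific geometry of $J_\rho$ (the function $\rho$, the solution $g$ of $g'=T\rho$) enters, as the half-cylinder over $\gamma$ is the model for which no area is ``wasted''. For the equality case, I would argue that equality forces $f^*d\lambda\equiv 0$ away from the punctures, hence $df$ has rank $\leq 1$ off a discrete set and its image lies in a single cylinder over a geodesic; combined with the boundary condition on $Q$, openness of the half-cylinder, and a unique continuation / removable singularity argument, this forces $f$ to be a branched cover of the half-cylinder over a single closed geodesic. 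Making the collar estimate and the equality analysis fully rigorous — especially handling branch points and the behavior of $f$ where it meets $Q$ transversally versus tangentially — is the technically delicate part; I would expect to invoke the asymptotic analysis of \cite{BEHWZ} for the behavior near the punctures and standard boundary regularity for the behavior near $Q$.
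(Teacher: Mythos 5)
There is a genuine gap, and you in fact point at it yourself: your Stokes argument is run with the canonical primitive $p\,dq$, which vanishes along the zero section, so the boundary loops $\sigma_i$ contribute nothing and you only obtain $\sum_j\length(\gamma_j)\ge 0$. The missing idea in the paper's proof is to replace $p\,dq$ by the \emph{normalized} $1$-form
$$
\lambda:=\frac{p\,dq}{|p|}
$$
on $T^*Q\setminus Q$. For this form one still has $f^*d\lambda\ge 0$ pointwise for every $J_\rho$-holomorphic curve $f$, the integral of $f^*\lambda$ over small loops around the $j$-th positive puncture converges to $\length(\gamma_j)$, and --- this is the crucial point your argument has no substitute for --- the integral of $f^*\lambda$ over loops converging to the $i$-th boundary component converges to $\length(\sigma_i)$, because the normalization by $|p|$ makes $\lambda$ record the length of the underlying loop on $Q$ as the curve approaches the zero section (this limit is the delicate part of the proof). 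Stokes' theorem then gives exactly
$$
0\le\int_f d\lambda=\sum_{j=1}^s\length(\gamma_j)-\sum_{i=1}^k\length(\sigma_i),
$$
with the boundary lengths appearing with the sharp constant $1$, and equality holds iff $f^*d\lambda\equiv 0$, which for this particular form characterizes branched covers of the orbit half-cylinders. The substitutes you propose (cutting at a level set $\{|p|=\epsilon\}$, monotonicity, an isoperimetric-type inequality in a collar) are not developed, and it is not clear they could ever produce the inequality with constant exactly $1$, let alone the precise equality case; an isoperimetric or monotonicity bound typically loses a multiplicative constant, whereas the statement to be proved is sharp (the half-cylinders realize equality).

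A second point your sketch does not address, but which becomes unavoidable once the correct form is used: $\lambda=p\,dq/|p|$ is singular along $Q$, so $f^*d\lambda$ is undefined at interior points where $f$ meets the zero section, and some care (excising neighborhoods of these points, or an analysis of the local intersection with $Q$) is needed to justify both the positivity $\int_f d\lambda\ge 0$ and the Stokes computation. Your equality-case discussion is in the right spirit --- equality should force the pullback of the relevant $2$-form to vanish identically, hence force the image into a cylinder over a geodesic --- but as written it is anchored to $d(p\,dq)=\omega$, for which "$f^*\omega\equiv 0$'' would force $f$ to be constant rather than a branched cover of a half-cylinder, so the argument does not establish the stated equality characterization.
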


The idea of the proof is the following. Consider the 1-form 
$$
   \lambda := \frac{p\,dq}{|p|}
$$
on $T^*Q \setminus Q$. A straightforward computation shows that
$f^*d\lambda\geq 0$ for every $J_\rho$-holomorphic curve
$f$. Moreover, the integral of $f^*\lambda$ over a small loop around
the $j$-th positive puncture converges to $\length(\gamma_i)$ as the
loop shrinks to a point (this is immediate), and the integral of
$f^*\lambda$ over a loop around the $i$-th boundary component  
converges to $\length(\sigma_i)$ as the loop converges to the
boundary (this is a bit tricky). Now the inequality follows from
Stokes' theorem: 
$$
   0\leq \int_fd\lambda = \sum_{j=1}^s\length(\gamma_j) -
   \sum_{i=1}^k\length(\sigma_i). 
$$
Moreover, this argument shows that equality holds iff
$f^*d\lambda\equiv 0$, which happens precisely for branched covers of
orbit half-cylinders. 
To make this idea rigorous, however, some care has to be taken when $f$
hits $Q$ at interior points since $f^*d\lambda$ is not defined
there. 

Given Lemma~\ref{lem:action-estimate}, the proof of
Theorem~\ref{thm:iso} follows a standard argument, using the
filtration by actions on the SFT side and by length on the string
topology side and applying the Five Lemma . 

There is, however, one important point. In the chain complex for
linearized contact homology, only good Reeb orbits appear. One checks
that in the present context a Reeb orbit is bad if and only if the
stable bundle of the corresponding $S^1$-family of geodesics in $Q$,
viewed as a critical manifold of the energy functional, is
non-orientable. In this case, the local Morse homology
vanishes~\cite{Ra}, so that it does not contribute to the homology on
the string topology side either.  

\subsubsection*{An Example: Surfaces of genus $>2$}

To illustrate Theorem~\ref{thm:iso}, we discuss the special case where
$Q$ is a closed oriented surface of genus $g\geq 2$. In this case
the space of strings in each non-trivial free homotopy class is
contractible, so that with our grading conventions the string homology
is concentrated in degree $-1$ and equals
$$
\HH_*(\Sigma, Q) =\HH_{-1}(\Sigma Q, Q) = \bigoplus_{0 \neq \gamma \in
  [S^1,Q]} \Q \langle \gamma \rangle.
$$
The bracket and cobracket on $\HH_*$ in this case were discovered by
Goldman~\cite{Go} and Turaev~\cite{Tu}, respectively, and were an
important precursor for the definition of string topology by Chas and
Sullivan. Note that here the abstract definitions given in
Section~\ref{sec:string} reduce to the following simple prescription:
For the bracket, represent the two given free homotopy classes by curves
in general position, so that they intersect transversally in finitely many 
points. Now each intersection point contributes one summand to the
bracket. It is simply the free homotopy class obtained by
concatenation of the two loops based there, with a sign obtained by
comparing the ordered set of tangent vectors with the
orientation of the surface, see Figure~\ref{fig:9}. 
\begin{figure}[h]
\begin{center}
\epsfbox{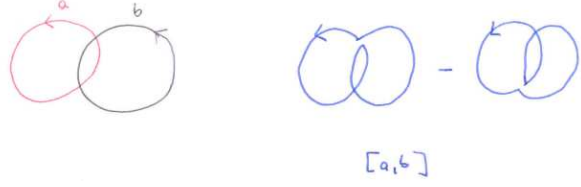}
\caption{The bracket for curves on a surface}
\label{fig:9}
\end{center}
\end{figure}

Similarly, for the cobracket,
represent a given free homotopy class by a curve in general position
with itself. Then the cobracket has two summands for each self-intersection
point. They are given by the two ordered pairs of the classes obtained by
splitting the closed curve into two, again each with the sign obtained by
comparing the ordered set of tangent vectors to the surface
orientation, see Figure~\ref{fig:10}. Here the color coding is such
that the red string is the first and the black string is the second.
Note, in particular, that the bracket and cobracket can be effectively
computed for any given input.
\begin{figure}[h]
\begin{center}
\epsfbox{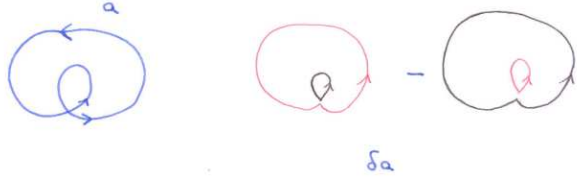}
\caption{The cobracket for curves on a surface}
\label{fig:10}
\end{center}
\end{figure}
%
%
%
%

Our next goal is to translate this topological information into
existence results for holomorphic curves in $\R \x S^*Q$. For this we
choose some Riemannian 
metric of negative curvature on our surface $Q$. Then all closed Reeb
orbits are non-contractible and their Morse indices, and hence their
Conley-Zehnder indices, are zero. Note that the dimension formulas for
moduli spaces of curves in the symplectization $\R \x S^*Q$ and in the
cotangent bundle $T^*Q$ discussed in Section~\ref{sec:SFT} and
\ref{sec:master} specialize to
\begin{gather*}
\dim \MM_g(\Gamma^-,\Gamma^+) = -(2-2g -s^+-s^-)\\
\dim \MM_{g,k}(T^*Q,Q; \Gamma^+) = - (2-2g -s^+-k).
\end{gather*}
Since the potential $\F$ is built from $0$-dimensional
moduli spaces of curves in $T^*Q$, we find that only genus zero curves
with exactly two (positive) punctures contribute, so that
$$
\F = \frac 1 \hbar \sum_{i,j} e^{ij}p_ip_j.
$$
Using a maximum principle argument, one can prove that the
only two-punctur\-ed spheres in $T^*Q$ are the trivial cylinders
connecting $\gamma$ with $-\gamma$, so that $e^{ij}=\pm 1$ if
the indices $i$ and $j$ correspond to one geodesic with its two possible
orientations and $e^{ij}=0$ in all other cases. So, up to sign, we
have completely determined the coefficients of $\F$.

Similarly, since $\H$ is built from $1$-dimensional moduli spaces of
curves in the symplectization, we find that there are only four
possible types of curves that contribute. In fact, $(g,s^+,s^-)$ can
only take the values 
$$
(0,1,2), (0,2,1), (0,3,0) \text{ \rm and } (1,1,0),
$$
so that $\H$ has the form
$$
\H = \frac 1 \hbar \left(
     \sum_{i,j,k} a^k_{ij} q_iq_jp_k + \sum_{i,j,k} b^{jk}_i q_ip_jp_k
    +\sum_{i,j,k} c^{ijk} p_ip_jp_k + \hbar \sum_i d^i p_i\right).
$$
Now recall that the chain map inducing the isomorphism of
Theorem~\ref{thm:iso} is given by moduli spaces of holomorphic disks
with one puncture and boundary on $Q \subset T^*Q$. Since the
geodesics are absolutely length-minimizing in their homotopy classes,
Lemma~\ref{lem:action-estimate} implies that the only such
half-cylinders are the trivial half-cylinders over the geodesics
themselves. So the isomorphism is given by the map that sends each
closed geodesic, viewed as a Reeb orbit, to its free homotopy class.
In view of Theorem~\ref{thm:iso}, and combined with the explicit
knowledge of $\F$, this allows one to compute the coefficients
$a_{ij}^k$, $b_i^{jk}$ and $c^{ijk}$ in $\H$ algebraically. 
For better illustration, here we instead present the pictures behind
the algebra, which also allow us to determine the coefficients $d^i$.

To understand the counts $a_{ij}^k$ of spheres in the symplectization
with 1 positive and 2 negative punctures, we consider the boundary of a
$1$-dimensional moduli space of spheres in $T^*Q$ with two boundary
\begin{figure}[h]
\begin{center}
\epsfbox{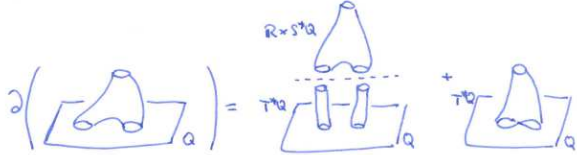}
\caption{Codimension 1 phenomena in a moduli space of curves with two
  boundary components and one puncture}
\label{fig:11}
\end{center}
\end{figure}
components on $Q$ and one puncture, given in Figure~\ref{fig:11}. 
We find
that the $a_{ij}^k$ are (up to sign) identical to the coefficients of
the string cobracket (represented by the second picture). 

To understand the count $b_i^{jk}$ of spheres in the symplectization
with 2 positive and 1 negative punctures, we consider the boundary of
a $1$-dimensional moduli space of spheres in $T^*Q$ with 1 boundary
\begin{figure}[h]
\begin{center}
\epsfbox{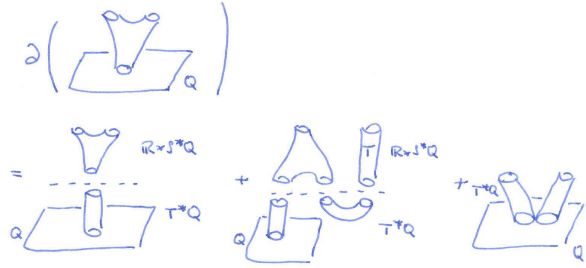}
\caption{Codimension 1 phenomena in a moduli space of curves with one
  boundary component and two punctures}
\label{fig:12}
\end{center}
\end{figure}
component on $Q$ and 2 punctures, given in Figure~\ref{fig:12}. 
Again this gives a linear equation, expressing the coefficients $b_i^{jk}$
in terms of the coefficients of the string bracket and the string
cobracket. 

To understand the count $c^{ijk}$ of spheres in the symplectization
with 3 positive punctures, we consider the boundary of a
$1$-dimensional moduli space of spheres in $T^*Q$ with 3 punctures
(and no boundary), given in Figure~\ref{fig:13}, and conclude that the
$c^{ijk}$ depend linearly on the $a_{ij}^k$ and the $b_i^{jk}$. 
\begin{figure}[h]
\begin{center}
\epsfbox{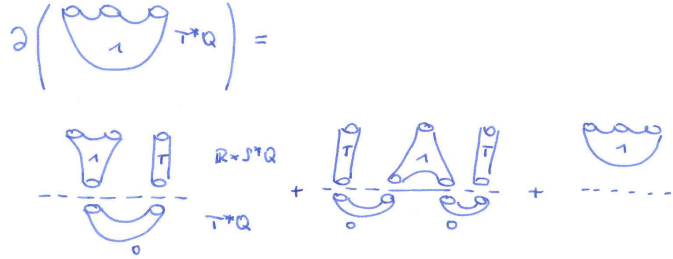}
\caption{Codimension 1 phenomena in a moduli space of curves with
  three punctures and no boundary}
\label{fig:13}
\end{center}
\end{figure}

Finally, to understand the count $d^i$ of tori in the symplectization
with 1 positive puncture, we consider the boundary of a
$1$-dimensional moduli space of tori in $T^*Q$ with 1 puncture (and no
boundary), pictured in Figure~\ref{fig:14}, and conclude that the count
is determined by the number of times a geodesic appears with both its
orientations in the expression for the cobracket. 
\begin{figure}[h]
\begin{center}
\epsfbox{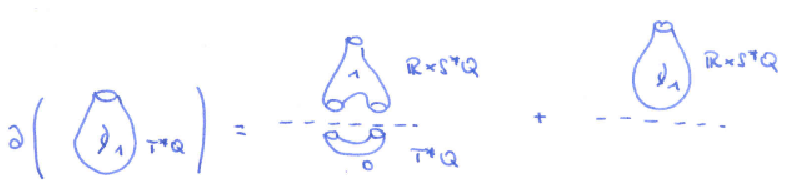}
\caption{Codimension 1 phenomena in a moduli space of curves with
  genus 1, one puncture and no boundary}
\label{fig:14}
\end{center}
\end{figure}

Taken together, we have sketched the proof of
\begin{prop}\label{prop:surfaces}
On an oriented closed surface $Q$ of genus $\geq 2$ with a metric of
negative curvature, the coefficients of $\H$ are linear functions 
of the coefficients of the string bracket and cobracket. In
particular, they are independent of the choice of the metric of
negative curvature.
\end{prop}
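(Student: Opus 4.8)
The plan is to make rigorous the heuristic picture sketched in Figures~\ref{fig:11}--\ref{fig:14}, namely to extract from Theorem~\ref{thm:iso} a system of linear equations that determines every coefficient of $\H$ in terms of the structure constants of the Goldman bracket and Turaev cobracket. First I would record the simplifications available in this setting: for a metric of negative curvature on $Q$ all closed geodesics are noncontractible with vanishing Conley--Zehnder index, so the index formulas reduce to $\dim\MM_g(\Gamma^-,\Gamma^+)=2g-2+s^++s^-$ and $\dim\MM_{g,k}(T^*Q,Q;\Gamma^+)=2g-2+s^++k$; the dimension count then forces $\H$ to have only the four types of terms $(g,s^\pm)\in\{(0,1,2),(0,2,1),(0,3,0),(1,1,0)\}$ displayed in the excerpt, and forces $\F=\tfrac1\hbar\sum_{i,j}e^{ij}p_ip_j$. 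A maximum-principle argument (a minor variant of Lemma~\ref{lem:action-estimate}, using that the projection of a holomorphic curve in $T^*Q$ cannot have an interior maximum of $|p|$) shows the only two-punctured spheres are trivial cylinders, so $e^{ij}=\pm1$ exactly when $i,j$ label the two orientations of a single geodesic. Likewise, Lemma~\ref{lem:action-estimate} applied to the half-cylinders gives that the chain map $\psi$ of Theorem~\ref{thm:iso} sends each geodesic (as a Reeb orbit) to its free homotopy class, with no corrections, since the geodesics are absolutely length-minimizing in their classes.

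Next I would run the four degeneration arguments in sequence. For each of the moduli spaces $\MM_{0,2}(T^*Q,Q;\gamma)$, $\MM_{0,1}(T^*Q,Q;\gamma_1,\gamma_2)$, $\MM_{0,0}(T^*Q,Q;\gamma_1,\gamma_2,\gamma_3)$ and $\MM_{1,0}(T^*Q,Q;\gamma)$ of expected dimension one, the compactness and gluing theory (in the form needed for Theorem~\ref{thm:master-L}, which we are entitled to assume) identifies the codimension-one boundary. On one side the boundary strata are the string-topology degenerations of Figures~\ref{fig:3}--\ref{fig:4}, whose evaluation at the boundary loop(s) reproduces exactly the structure constants of $\delta$ (the cobracket) and $\mu$ (the bracket); on the other side they are the SFT breakings in $\R\times S^*Q$, which by the normalization above contribute precisely $a^k_{ij}$, $b^{jk}_i$, $c^{ijk}$, $d^i$. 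Setting the signed count of the total boundary to zero yields, for Figure~\ref{fig:11}, a linear relation expressing $a^k_{ij}$ through the cobracket coefficients; for Figure~\ref{fig:12}, one expressing $b^{jk}_i$ through bracket and cobracket coefficients (together with the already-determined $e^{ij}$ and $a^k_{ij}$); for Figure~\ref{fig:13}, one expressing $c^{ijk}$ through the $a$'s and $b$'s; and for Figure~\ref{fig:14}, one expressing $d^i$ through the diagonal part of the cobracket, i.e.\ the number of times a geodesic occurs with both orientations. Back-substituting along this chain writes every coefficient of $\H$ as a fixed linear function of the Goldman--Turaev structure constants, which depend only on the topology of $Q$; the claimed metric-independence is then immediate.

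The main obstacle I expect is \emph{signs and orientations}: one must verify that the coherent orientations on the SFT moduli spaces (\cite{BM}) and the orientations used in the string-topology operations $\Delta$, $\nabla$ (as in Proposition~\ref{prop:string}, to appear in \cite{CL:string}), together with the relative-chain conventions, match up so that the boundary identity in each figure holds \emph{on the nose} rather than up to an overall ambiguous sign; the ``self-gluing at a transversal self-intersection'' phenomenon flagged in the technical remarks to Section~\ref{sec:master} is exactly the point where this is delicate. A secondary, milder issue is ruling out unwanted boundary strata: as in the proof of Theorem~\ref{thm:master-L}, one argues that SFT breakings of higher-dimensional moduli spaces in a symplectization, and sphere/disk bubbling, either are excluded by exactness or give chains in $\Sigma^k$ that factor through lower-dimensional chains and hence contribute zero after evaluation. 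Once these bookkeeping points are settled, the proposition follows purely formally from the four linear relations.
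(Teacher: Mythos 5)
Your proposal follows essentially the same route as the paper: the dimension count and maximum principle pin down $\F$ and the four admissible types of terms in $\H$, Lemma~\ref{lem:action-estimate} identifies the chain map with the ``geodesic $\mapsto$ free homotopy class'' map, and the four codimension-one degeneration arguments of Figures~\ref{fig:11}--\ref{fig:14} give exactly the linear relations determining $a^k_{ij}$, $b^{jk}_i$, $c^{ijk}$, $d^i$ from the Goldman--Turaev structure constants. Your closing caveats about orientations and the exclusion of spurious boundary strata match the issues the paper itself defers to \cite{CL:exact}, so the argument is correct as a sketch at the same level of rigor as the paper's.
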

Note that each type of coefficient is non-zero for an appropriate
choice of asymptotic geodesics. Specific examples are given in
Figures~\ref{fig:15}--\ref{fig:17}. 
\begin{figure}[h]
\begin{center}
\epsfbox{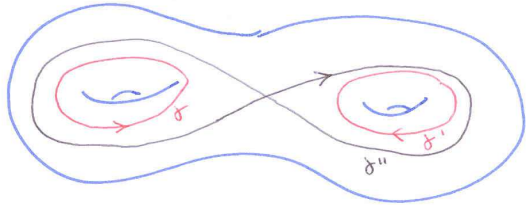}
\caption{Nonvanishing string cobracket}
\label{fig:15}
\end{center}
\end{figure}
Here Figure~\ref{fig:15} shows a
configuration where the coefficient of $\frac 1 \hbar
q_{\gamma}q_{\gamma'}p_{\gamma''}$ is nonzero, and Figure~\ref{fig:16}
shows a configuration where the coefficient of $\frac 1 \hbar
q_{\gamma}p_{\gamma'}p_{\gamma''}$ is nonzero. 
\begin{figure}[h]
\begin{center}
\epsfbox{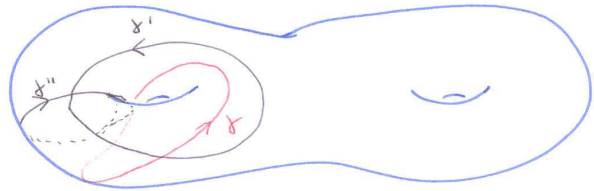}
\caption{Nonvanishing string bracket}
\label{fig:16}
\end{center}
\end{figure}
In both cases, all other possible combinations of string brackets and
cobrackets either vanish (Figure~\ref{fig:15}) or involve other
homotopy classes (Figure~\ref{fig:16}), so that they
also give examples for a  nonzero coefficient of $\frac 1 \hbar
p_{-\gamma}p_{-\gamma'}p_{\gamma''}$ (Figure~\ref{fig:15}) and $\frac 1 \hbar
p_{-\gamma}p_{\gamma'}p_{\gamma''}$ (Figure~\ref{fig:16}), respectively. 

Finally, Figure~\ref{fig:17} gives an example for a nonzero
\begin{figure}[h]
\begin{center}
\epsfbox{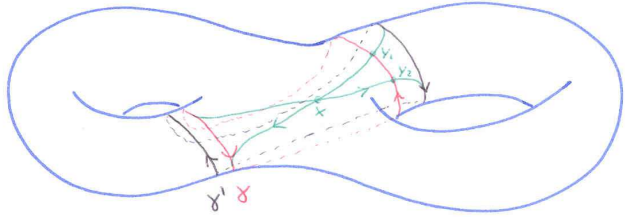}
\caption{A curve appears with both orientations in a cobracket}
\label{fig:17}
\end{center}
\end{figure}
coefficient of $p_\gamma$. Start with two curves $\gamma$ and
$\gamma'$ which differ only in their orientation (we have drawn them as
parallel copies to improve readability of the picture). Then build a
new curve $\gamma''$ by joining $\gamma$ and $\gamma'$ as shown in
green. The curve $\gamma''$ has three self-intersections, labelled by
$x$, $y_1$ and $y_2$ in the picture. Applying the cobracket to
$\gamma''$ means resolving each of them in turn. The resolution of $x$
clearly gives back $\gamma$ and $\gamma'$, whereas one can check that
the resolution of $y_1$ or $y_2$ gives curves in other homotopy
classes. 

{\small 
\subsubsection*{Technical remarks}

The details of the proof of Theorem~\ref{thm:iso}, of course including
the proof of Lemma~\ref{lem:action-estimate}, will be given in
\cite{CL:exact}. There we also give the precise form of the equations
alluded to in Proposition~\ref{prop:surfaces}, which depends on a
detailed discussion of orientation conventions.
}

\section{General symplectic cobordisms}\label{sec:cob}

In the previous sections we have considered only exact symplectic
cobordisms. In this section we outline the generalization to non-exact
cobordisms in terms of twisted $BV_\infty$-structures and discuss
relations to Fukaya's work on Lagrangian embeddings. 

In this section, let $(X^{2n},\om)$ be a symplectic
cobordism between contact manifolds $(Y^\pm,\lambda^\pm)$
(i.e.~$\om|_{Y^\pm}=d\lambda^\pm$) with a compatible almost complex
structure $J$. In addition, let $Q\subset X$ be a closed oriented
relatively spin Lagrangian submanifold. We allow both $(X,\om)$ and
$Q$ to be non-exact. For simplicity, we assume that $H_1(X;\Z)$
has no torsion. 

In the non-exact case we need to incorporate homology classes into the 
formalism, which is done as described in~\cite{EGH}: 
Fix a basis $A_1,\dots,A_N$ for $H_2(X,Q)$ (modulo torsion)
and introduce formal variables $z_1,\dots,z_N$ with grading
$$
|z_i| := -2c_1(A_i),
$$
where $c_1$ denotes the first Chern class of the tangent bundle
$(TX,J)$. Choose curves $b_1,\dots,b_M$ representing a basis of
$H_1(X;\Z)$ (which is free by assumption). For each closed Reeb orbit 
$\gamma$ of $Y^+$ and $Y^-$ we fix a 
capping surface connecting $\gamma$ to the corresponding linear
combination of the $b_i$. These
choices allow us to assign a class in $H_2(X,Q)$ to each
punctured holomorphic curve in $X$ with boundary on $Q$.  

Fix ordered collections of closed Reeb orbits 
$\Gamma^\pm=(\gamma_1^\pm,\dots,\gamma_{s^\pm}^\pm)$, integers
$g,k\geq 0$, and a relative homology class $d\in H_2(X,Q)$.  
Denote by
$$
   \MM^d_{g,k}(X,Q;\Gamma^-,\Gamma^+)
$$
the moduli space of $J$-holomorphic curves of genus $g$ in $X$ with
$s^+$ positive and $s^-$ negative punctures asymptotic to the
$\gamma_i^+$ resp.~$\gamma_j^-$ and with $k$ ordered boundary
components on $Q$ representing the relative homology class $d$. This
moduli space has expected dimension  
\begin{eqnarray*}
\lefteqn{\dim\MM^d_{g,k}(X,Q;\Gamma^-,\Gamma^+)}\\
&=& (n-3)(2-2g-s^+-s^--k) +
   \sum_i\CZ(\gamma_i^+) - \sum_j\CZ(\gamma_j^-) + \mu(d),  
\end{eqnarray*}
where $\mu:H_2(X,Q)\to\Z$ denotes the Maslov class. 
Note that $\mu$ takes only even values because $Q$ is orientable.
Simultaneous evaluation at the $k$ boundary components defines a map
$$
c^d_{g,k}(\Gamma^-,\Gamma^+):\MM^d_{g,k}(X,Q;\Gamma^-,\Gamma^+)\to
\underbrace{\Sigma\x \dots \x\Sigma}_{k} =: \Sigma^k,
$$ 
which we again view as an element of the chain group
$C_*(\Sigma^k,\const_k)$ defined in Section~\ref{sec:master}. 
As in the exact case, define the correlators
$$
\langle\underbrace{q,\dots,q}_{s^-};
\underbrace{p,\dots,p}_{s^+}\rangle^d_{g,k} 
:= \sum_{|\Gamma^\pm|=s^\pm}c^d_{g,k}(\Gamma^-,\Gamma^+)
\,q^{\Gamma^-}p^{\Gamma^+}
$$
and the potential $\L$ of the pair $(X,Q)$ by 
$$
   \L:=\frac{1}{\hbar}\sum_{g=0}^\infty\L_g\hbar^g,
$$
where
$$
   \L_g := \sum_{s^-,s^+,k,d}\frac 1 {s^-!s^+!k!} \langle
   \underbrace{q,\dots,q}_{s^-};
   \underbrace{p,\dots,p}_{s^+}\rangle^d_{g,k}\,z^d.   
$$
Thus $\L$ is a formal power series in $\hbar$, $z^{\pm 1}$ and the
$p_\gamma^+$, polynomial in the $q_\gamma^-$, with coefficients in the
string algebra $\CC$ of $Q$. Similarly, the Hamiltonians $\H^\pm$ of
$Y^\pm$ now include the variables $z$ encoding relative homology
classes obtained from the embeddings $Y^\pm \subset X$. With these
modified definitions, the proof of Theorem~\ref{thm:master-L} carries
over verbatim to show  

\begin{thm}\label{thm:master-L2}
The potential $\L$ of a (not necessarily exact) pair $(X,Q)$ satisfies
the master equation
\begin{equation}
\label{eq:master-L2}
   \D_\str(e^\L) =
   e^\L\overleftarrow{\H^+}-\overrightarrow{\H^-}e^\L, 
\end{equation} 
where $\D_\str = \p + \Delta + \hbar\nabla$ is the
$BV_\infty$-operator in string topology as defined in
Section~\ref{sec:geo}.  
\end{thm}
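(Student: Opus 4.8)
The plan is to obtain Theorem~\ref{thm:master-L2} from Theorem~\ref{thm:master-L} by checking that decorating the exact-case argument with the homology bookkeeping $z^d$ changes nothing essential. Recall that the master equation is, in substance, the chain-level identity expressing the codimension-one boundary of the compactified moduli spaces $\ol{\MM}{}^d_{g,k}(X,Q;\Gamma^-,\Gamma^+)$ --- assembled, after evaluation at the $k$ boundary loops, into the generating series $e^\L$ --- in terms of the operators appearing on the two sides. So the first step is to invoke the structure and compactness statement that each $\ol{\MM}{}^d_{g,k}$ is a weighted branched manifold with boundary and corners whose codimension-one strata are of the standard types (cf.~\cite{BEHWZ, CMS, FOOO}), and then to run through those strata one by one.

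The second step is the stratum-by-stratum match, which is word for word the one in the sketch of Theorem~\ref{thm:master-L}. First, strata in which curves break off into the symplectizations $\R \x Y^\pm$ reassemble --- exactly as in the chain-level SFT master equation \eqref{eq:master-F2} --- into $e^\L\overleftarrow{\H^+}-\overrightarrow{\H^-}e^\L$; the only novelty is that $\H^\pm$ now carry the variables $z$ coming from $H_2(Y^\pm)$, pushed forward to $H_2(X,Q)$ by the inclusions $Y^\pm \subset X$ together with the chosen capping surfaces. This is the standard SFT formalism for cobordisms, and it is consistent precisely because relative homology classes add under gluing. Second, the strata coming from the Lagrangian boundary conditions are the three phenomena of Figure~\ref{fig:3}: pinching an arc between two boundary loops gives the $\Delta$ term; pinching an arc of a boundary loop to itself gives the $\hbar\nabla$ term, the power of $\hbar$ being dictated by the change in Euler characteristic under the corresponding gluing; and collapsing a boundary loop to a point produces a chain supported in $\const_k$, hence zero in $C_*(\Sigma^k,\const_k)$. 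Third, all remaining abstract boundary strata --- e.g.~breaking off of higher-dimensional families in a symplectization --- become dimensionally degenerate chains after evaluation at the boundary loops, and so drop out of the identity. Throughout, each stratum carries a well-defined weight $z^d$, the operations $\p$, $\Delta$, $\nabla$ act trivially on these weights, and the gradings are arranged to match via $|z_i|=-2c_1(A_i)$ and the term $\mu(d)$ in the dimension formula; so the exact-case identity acquires its $z$-decoration verbatim.

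The one genuinely new geometric feature --- and the reason the theorem is still flagged conjectural --- is that dropping exactness permits bubbling of holomorphic spheres and, more importantly, of holomorphic disks with boundary on $Q$. The point I would emphasize is that this does not introduce a new algebraic term. A disk bubbling off at a boundary point of $\ol{\MM}{}^d_{g,k}$ produces a main curve together with a disk whose boundary loop meets a boundary loop of the main curve at a point; summing over such configurations is exactly the operation of gluing two loops at an intersection applied to a product of two $\L$-factors, so it is already a part of $\hbar\nabla(e^\L)$. This is the mechanism by which, after forgetting punctures, the master equation specializes to Fukaya's equation $\p\a_1 + \frac12[\a_1,\a_1]_\str = 0$~\cite{Fu:06}. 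Sphere bubbling should not survive as a codimension-one phenomenon, but in the absence of any positivity hypothesis this requires a careful dimension count.

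The hard part is therefore not the combinatorics above but the analytic input behind the first step: transversality and gluing for punctured curves with Lagrangian boundary (a combination of SFT-type and FOOO-type arguments), a compactness theorem refined to control the boundary degenerations and the possible disk and sphere bubbles in the non-exact setting --- where, notably, the bound valid for $n>3$ on the number of boundary components of a curve with fixed asymptotics in a fixed relative class is expected to fail for $n=2,3$ --- and coherent orientations, in particular for the ``self-gluing at a transversal self-intersection'' entering $\Delta$, for which no reference yet exists. Granting these foundations, the proof is literally that of Theorem~\ref{thm:master-L} with $z^d$ inserted, and the details will appear in~\cite{CL:exact}.
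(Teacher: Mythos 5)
Your proposal follows essentially the same route as the paper: the paper's argument for Theorem~\ref{thm:master-L2} is precisely that, with the $z^d$-decorated definitions, the boundary-stratum analysis sketched for Theorem~\ref{thm:master-L} carries over verbatim, with the theorem remaining conjectural pending the analytic foundations (including the new disk and sphere bubbling). Your added observation that disk bubbling is absorbed into $\hbar\nabla(e^\L)$ because $\L$ now contains puncture-free curves is left implicit in the paper but is consistent with its discussion of the specialization to Fukaya's equation.
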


Formally, equation~\eqref{eq:master-L2} is identical to
equation~\eqref{eq:master-L}, but we will see that the interpretation
is quite different. For simplicity, we assume from now on
$Y^-=\varnothing$. Write
$$
   \L = \A+\B,
$$
where $\A := \L|_{p^+=0}$ encodes the holomorphic curves without
punctures. The master equation specializes at $p^+=0$ to
\begin{equation}\label{eq:master-A}
   \D_\str(e^\A)=0, 
\end{equation}
so $\A$ defines a Maurer-Cartan element in the $BV_\infty$-algebra
$(\CC[[z]],\D_\str)$. As explained in Section~\ref{sec:alg},
this leads to a twisted $BV_\infty$-operator 
$$
   \D_\A := e^{-\A}\D_\str e^\A
$$
on $\CC[[z]]$. Now arguing as in Section~\ref{sec:geo} we find
\begin{cor}\label{cor:mor2}
The map 
$$
   \PHI:\AA^+[[\hbar]]\to\CC[[\hbar,z]],\qquad
   g\mapsto e^{\overrightarrow \B}g
$$
is a $BV_\infty$-morphism from $(\AA^+[[z]],\D_\SFT)$ to
$(\CC[[z]],\D_\A)$. 
\end{cor}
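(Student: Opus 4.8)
The plan is to deduce the statement from the master equation~\eqref{eq:master-L2} by the same conjugation argument that was applied to the potential $\F$ in Section~\ref{sec:geo}. First I would record that $\A$ is a Maurer-Cartan element: putting $p^+=0$ in~\eqref{eq:master-L2} (and using $Y^-=\varnothing$) gives exactly equation~\eqref{eq:master-A}, namely $\D_\str(e^\A)=0$, so the discussion of twisted operators in Section~\ref{sec:alg} applies and $\D_\A=e^{-\A}\D_\str e^\A$ is a (partially defined) $BV_\infty$-operator on $\CC[[z]]$, making $(\CC[[z]],\D_\A)$ a $BV_\infty$-algebra.

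Next I would factor $e^\L=e^\A e^\B$, which is legitimate because $\B$ involves only the positive Reeb variables $p^+$ that do not appear in $\A$, so $\A$ and $\B$ commute in the coefficient ring. Substituting this into~\eqref{eq:master-L2} written as $\D_\str(e^\L)=e^\L\,\overleftarrow{\H^+}$, and using the defining identity $\D_\str(e^\A e^\B)=e^\A\,\D_\A(e^\B)$, the factor $e^\A$ cancels and one is left with the twisted master equation
$$
\D_\A(e^\B)=e^\B\,\overleftarrow{\H^+}.
$$
This is the precise analogue, with $(\F,\D^+,\D^-)$ replaced by $(\B,\D_\SFT,\D_\A)$, of the identity in Section~\ref{sec:geo} that encodes a $BV_\infty$-morphism. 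The same simple algebraic manipulations used there --- the adjointness of multiplication by $q^+_\gamma$ and differentiation by $p^+_\gamma$ under the natural pairing, together with the fact that $\D_\A$ acts coefficientwise on the string chains --- convert it into
$$
e^{\overrightarrow{\B}}\,\D_\SFT=\D_\A\,e^{\overrightarrow{\B}},
$$
which is condition (BVmor2) for $\PHI=e^{\overrightarrow{\B}}$, whose underlying linear map $\overrightarrow{\B}\colon\AA^+[[z]]\to\CC[[\hbar,z]]$ is obtained from $\B$ by the substitution $p^+_\gamma\mapsto\kappa_\gamma\hbar\,\overrightarrow{\frac{\p}{\p q^+_\gamma}}$, exactly as $\overrightarrow{\F}$ was formed in Section~\ref{sec:geo}.

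Conditions (BVmor1) and (BVmor3) are then routine. First, $\overrightarrow{\B}(1)=0$ because every term of $\B$ carries at least one $p^+$, so $\overrightarrow{\B}$ has no zeroth order part. Second, as for $\D_\SFT$ and $\F$ in Section~\ref{sec:geo}, a term of $\B$ with $m$ positive punctures and genus $g$ produces a differential operator of order $m$ weighted by $\hbar^{g+m-1}$, hence contributes to $\overrightarrow{\B^{g+m}}$ with $m\le g+m$; thus each $\overrightarrow{\B^k}$ is a differential operator of order $\le k$ which vanishes on monomials of $q^+$-degree $>k$. Together these give the three defining properties of a $BV_\infty$-morphism from $(\AA^+[[z]],\D_\SFT)$ to $(\CC[[z]],\D_\A)$.

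I expect the real difficulties to lie entirely outside this formal computation. The input~\eqref{eq:master-L2} is itself still conjectural, and, more seriously, $\D_\str$ is only partially defined, so the identities $\D_\str(e^\L)=\dots$, $\D_\str(e^\A)=0$ and the conjugation defining $\D_\A$ must be given meaning with the appropriate transversality provisos and a careful treatment of the ``diagonal'' self-operations such as $\nabla(\L\cdot\L)$ that enter $\D_\str(e^\L)$ (cf.\ the technical remarks in Sections~\ref{sec:master} and~\ref{sec:string}). Granting these, the corollary is pure bookkeeping with signs, $\hbar$-degrees, and the dictionary between the Weyl-algebra and differential-operator pictures.
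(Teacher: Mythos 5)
Your argument is exactly the one the paper intends: specialize the master equation at $p^+=0$ to exhibit $\A$ as a Maurer--Cartan element, twist to get $\D_\A$, factor $e^\L=e^\A e^\B$ to obtain $\D_\A(e^\B)=e^\B\overleftarrow{\H^+}$, and then repeat the Section~\ref{sec:geo} dictionary to verify (BVmor1)--(BVmor3) for $e^{\overrightarrow{\B}}$. This matches the paper's ``arguing as in Section~\ref{sec:geo}'' proof, including the caveats about the conjectural status of the master equation and the partially defined string operations.
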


  From here the theory in the general case is entirely parallel to the
one in the exact case. The only difference is that non-exact
symplectic cobordisms lead to $BV_\infty$-morphisms with respect to
{\em twisted} $BV_\infty$-structures on the Lagrangian/negative side.

To conclude this section, let us explain how equation~\eqref{eq:master-A}
is related to Fukaya's work~\cite{Fu:06}. First note that the
 BV$_\infty$-operator $\D_\str$ of string topology does not have
higher order terms, so that according to \eqref{eq:MC} 
equation \eqref{eq:master-A} is equivalent to
$$
\D_\str\A + \frac \hbar 2 [\A,\A]_\str = 0.
$$

Following a general principle known
from SFT, there is a version of equation~\eqref{eq:master-A} for
the part $\a:= \A_0$ of the generating function $\A$ corresponding to
genus zero curves, which is obtained by considering the leading order
term of the expansion of this equation in terms of $\hbar$. It takes
the form  
\begin{equation}   \label{eq:master-a} 
   (\p+\Delta)\a+\frac{1}{2}[\a,\a]_{\rm string}=0.
\end{equation}
\comment{
This equation can be interpreted as saying that $\a$ has zero
curvature when viewed as a connection on the differential Poisson
algebra $\bigl(\CC[[z]],\p+\Delta,[\ ,\
]_\str\bigr)$. 
This in turn implies that the covariant derivative 
$$
   \d_\a:\CC[[z]]\to\CC[[z]],\qquad
   \sigma\mapsto (\p+\Delta) \sigma + [\a,\sigma]_{\rm string}
$$ 
defines a differential on the algebra $\CC[[z]]$ whose 
homology $H_*(\CC[[z]],\d_\a)$ is an invariant of the
Lagrangian embedding up to Hamiltonian isotopy.
}
Specializing even further to the part $\a_1$ of $\a$ encoding genus 
zero curves with only one boundary component (i.e.~holomorphic disks),
we recover the equation 
$$
   \p\a_1+\frac{1}{2}[\a_1,\a_1]_{\rm string}=0,
$$
which was first observed by Fukaya~\cite{Fu:06}. In that paper, Fukaya
introduces the idea of viewing moduli spaces of holomorphic disks as
chains in string and loop space via evaluation at the boundary. He
also outlines numerous applications of this circle of ideas to the
topology of Lagrangian embeddings. These include the
characterization of all aspherical irreducible closed oriented
Lagrangian submanifolds in $\C^3$ (they must be a product of a circle
with a surface) and a proof of a generalization of Audin's conjecture
that each Lagrangian torus in $\C^n$ has minimal Maslov number two.  

We remark that each of the potentials $\A$, $\a$ and $\a_1$ give rise
to a twisted version of the corresponding differential in string
topology. The resulting homologies are invariants of the Lagrangian
embedding up to Hamiltonian isotopy.

We conclude this section with a discussion of the relation between
holomorphic curves with punctures and curves with boundary. Consider
again a closed Lagrangian submanifold $Q$ in a symplectic cobordism
$X$ without negative end. We have seen that the part $\A=\L|_{p^+=0}$
without positive punctures of the potential $\L$ yields a twisted
$BV_\infty$-operator $\D_\A$ on the algebra $\CC[[z]]$ of chains in
string space. On the other hand, cutting out $Q$ we get a symplectic
cobordism $X\setminus Q$ with negative end $S^*Q$. In this context,
the discussion following Theorem~\ref{thm:master-L2} shows that the   
part $\G=\F|_{p^+=0}$ without positive punctures of the potential
$\F$ of $X\setminus Q$ yields a twisted $BV_\infty$-operator
$\D_\G$ on the algebra $\AA[[z]]$ of polynomials in $q_\gamma$
corresponding to the Reeb orbits on $S^*Q$. The following result
states that these twisted operators are related by an 
isomorphism as in Theorem~\ref{thm:iso}.

\begin{thm}\label{thm:twisted-iso}
In the situation above, there exists a $BV_\infty$-morphism 
between the twisted $BV_\infty$-algebras
$$
   \Phi: (\AA[[z]],\D_\G)\to(\CC[[z]],\D_\A)
$$
which induces an isomorphism on homology. 
\end{thm}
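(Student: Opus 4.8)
The plan is to reduce the statement to a (twisted) gluing formula relating $X$ to the decomposition obtained by cutting out a Weinstein neighborhood of $Q$, and then to run the filtered comparison argument already used for Theorem~\ref{thm:iso}.

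\textbf{Step 1: Decompose $X$ along $S^*Q$.} By the Weinstein neighborhood theorem a neighborhood of $Q$ in $X$ is symplectomorphic to a neighborhood of the zero section in $T^*Q$. Fixing such an identification together with a sufficiently small disk bundle $D^*_\eps Q$, we get a splitting $X = (X\setminus D^*_\eps Q)\cup_{S^*Q} D^*_\eps Q$, where $X\setminus D^*_\eps Q$ is a symplectic cobordism with convex end $Y^+$ and concave end $S^*Q$ (so it carries a potential $\F$ restricting at $p^+=0$ to the Maurer-Cartan element $\G$ of $(\AA[[z]],\D_\SFT)$), and the completion of the complementary piece is $T^*Q$ with $Q$ its zero section (whose potential $\L^{T^*Q}$ restricts at $p^+=0$ to the Maurer-Cartan element producing the morphism $\psi$ of Theorem~\ref{thm:iso}). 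An SFT-type neck-stretching argument along $S^*Q$ then shows that every curve in $X$ with boundary on $Q$ and no positive puncture breaks into a curve in $X\setminus D^*_\eps Q$ with matching negative punctures at $S^*Q$, glued to curves in $T^*Q$ with boundary on $Q$ and matching positive punctures. At the level of generating functions this is the identity
$$
e^{\A} = \overrightarrow{e^{\L^{T^*Q}}}\,e^{\G},
$$
i.e. the Maurer-Cartan element $\A=\L|_{p^+=0}$ is the image of $\G$ under the $BV_\infty$-morphism determined by $\L^{T^*Q}$.

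\textbf{Step 2: Build $\Phi$ and show it is a quasi-isomorphism.} By Theorem~\ref{thm:master-L2} (equivalently Theorem~\ref{thm:filling_mor}) applied to $(T^*Q,Q)$, $\overrightarrow{\L^{T^*Q}}\colon(\AA[[z]],\D_\SFT)\to(\CC[[z]],\D_\str)$ is a $BV_\infty$-morphism. By Step 1 it carries the Maurer-Cartan element $\G$ to $\A$, so the general twisting formalism of Section~\ref{sec:alg} produces, by conjugation with $e^{\G}$ and $e^{\A}$, a $BV_\infty$-morphism $\Phi\colon(\AA[[z]],\D_\G)\to(\CC[[z]],\D_\A)$; verifying the $BV_\infty$-morphism axioms is routine bookkeeping once Step 1 is in place. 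To see that $\Phi$ induces an isomorphism on homology, filter $\AA[[z]]$ by the action of Reeb orbits on $S^*Q$ (= length of the underlying geodesics) and $\CC[[z]]$ by the length of strings on $Q$. By Lemma~\ref{lem:action-estimate} all curves contributing to $\L^{T^*Q}$, hence to $\Phi$, are length non-increasing, and the extra terms coming from the twists $\G$ and $\A$ strictly raise action; therefore the associated graded of $\D_\G$ is $\D_\SFT$, that of $\D_\A$ is $\D_\str$, and the associated graded of $\Phi$ is the morphism of Theorem~\ref{thm:iso}, whose defining curves are the trivial half-cylinders over geodesics. Since that morphism is an isomorphism on (linearized) homology, a standard spectral-sequence / Five Lemma comparison upgrades this to $\Phi$. (This also gives the precise meaning of the slogan that punctured curves asymptotic to $Q$ carry the same information as curves with boundary on $Q$.)

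\textbf{Main obstacle.} The analytic content of Step 1 is the crux: neck-stretching in the presence of Lagrangian boundary conditions requires compactness and gluing results going beyond even those needed for Theorems~\ref{thm:master-H}--\ref{thm:master-F}, and one must ensure that the $T^*Q$-curves which glue on actually remain inside the Weinstein neighborhood — this is where the monotonicity built into Lemma~\ref{lem:action-estimate}, together with a maximum principle, is used. One also has to carry along the string-topology transversality requirements and the coherent-orientation bookkeeping, including the self-gluing phenomenon for $\Delta$, flagged in the technical remarks to Sections~\ref{sec:master} and~\ref{sec:string}. These points will be treated in \cite{CL:exact}.
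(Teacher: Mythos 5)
Your outline coincides with the route the paper intends: the morphism is furnished by the curves in a Weinstein neighborhood, i.e.\ the potential of the pair $(T^*Q,Q)$ with punctures at $S^*Q$ and boundary on $Q$, conjugated by the Maurer--Cartan elements $\G$ and $\A$ via the composition formula obtained by stretching the neck along $S^*Q$, and the quasi-isomorphism property is extracted from the action/length filtration of Lemma~\ref{lem:action-estimate} exactly ``as in Theorem~\ref{thm:iso}''. Note that the paper gives no actual proof --- Theorem~\ref{thm:twisted-iso} is explicitly flagged as conjectural, with the analytic issues (neck stretching with Lagrangian boundary, disk and sphere bubbling, orientations, transversality) deferred to \cite{CL:exact} --- so your proposal is a faithful reconstruction of the intended argument, with the same caveats the authors themselves state.
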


Note that $\G$ counts holomorphic curves in $X\setminus Q$ with 
negative punctures asymptotic to $Q$, whereas $\A$ counts holomorphic
curves in $X$ with boundary on $Q$ (both without positive punctures). So
Theorem~\ref{thm:twisted-iso} gives a precise formulation of the 
principle that ``holomorphic curves with punctures asymptotic to a
Lagrangian submanifold $Q$ carry the same information as holomorphic
curves with boundary on $Q$''. 

{\small 
\subsubsection*{Technical remarks}

Similarly to Theorem~\ref{thm:master-L}, Theorem~\ref{thm:master-L2}
(and hence also Theorem~\ref{thm:twisted-iso}) is conjectural at this
point, with roughly the same analytic difficulties for the proof. As
new phenomena, one now also has disk and sphere bubbling. 

While the parity of the variables can be given precise meaning, the
integer grading depends on the choice of capping surfaces used, and
changing these capping surfaces shifts degrees by an appropriate
(even) Maslov index, cf. \cite{EGH}. 

To make the discussion more precise, one would also have to keep track
of certain Novikov conditions as described in \cite{EGH}.
}

\newpage
\appendix
\renewcommand{\thesection}{Appendix A}
\section{Towards relative SFT\\ (joint with
K.~Mohnke)}\label{sec:rel} 
\renewcommand{\thesection}{A}

In this section we argue that string topology should play an essential
role in relative symplectic field theory. 
This 
appendix is the result of joint work of the authors with K.~Mohnke. 

Let $(Y^{2n-1},\lambda)$ be a (not necessarily compact) contact
manifold which is convex in the sense of~\cite{EGH}. For simplicity,
we assume that there are no closed Reeb orbits. E.g.~this is the case
for $\R^{2n-1}$ or, more generally, any 1-jet space with its canonical
contact form. Let $\Lambda\subset Y$ be a compact Legendrian
submanifold, which for simplicity we assume to have zero Maslov
class. 

Let us first recall the definition of relative contact homology as
in~\cite{Ch, EGH, EES, EES:2}. A {\em Reeb chord} is a Reeb orbit starting
and ending on $\Lambda$. We assume that all Reeb chords are
nondegenerate.  
Let $J$ be a cylindrical almost complex structure on $\R\times Y$
adjusted to $\lambda$.
For a cyclically ordered collection of Reeb chords
$\Gamma=(\gamma_1,\dots,\gamma_s)$ 
denote by 
$\MM(\Gamma)$ 
the moduli space of $J$-holomorphic disks in $\R\times Y$ with
boundary on $\R\times\Lambda$ and $s$ boundary punctures (in
counterclockwise order) asymptotic to the $\gamma_i$. Here the
punctures can be positive or negative in any order.  
The vanishing of the Maslov class allows us to assign
to each Reeb chord $\gamma$ a Conley-Zehnder index
$\CZ(\gamma)\in\Z$ such that 
the following dimension formula holds:
$$
   \dim\MM(\Gamma) = (n-3)(1-s^+) +
   \sum_{i\in I^+}\bigl(\CZ(\gamma_i)-1\bigr) -
   \sum_{j\in I^-}\bigl(\CZ(\gamma_j)-1\bigr).  
$$
Here $I^\pm\subset\{1,\dots,s\}$ are the sets of indices corresponding
to positive resp.~negative punctures and $s^+=|I^+|$. 
If this dimension is $1$ denote by $n(\Gamma)\in\Q$ the
algebraic count of points in $\MM(\Gamma)/\R$. 

Assign to each Reeb chord $\gamma$ a formal variable $q_\gamma$ of
degree 
$$
   |q_\gamma| := \CZ(\gamma)-1
$$
and let $\AA$ be the free graded commutative algebra in the
$q_\gamma$. Define a derivation $\p:\AA\to\AA$ of degree $-1$ on
generators by
$$
   \p q_\gamma :=
   \sum_{\gamma_1,\dots,\gamma_k}n(\gamma_1,\dots,\gamma_k\gamma) 
   q_{\gamma_1}\cdots q_{\gamma_k},
$$
where the $\gamma$ corresponds to a positive puncture and
$\gamma_1,\dots,\gamma_k$ correspond to negative punctures. 
The following result is proved in~\cite{EES:2} for 1-jet spaces and used
to distinguish Legendrian submanifolds in all dimensions.

\begin{thm}
We have $\p^2=0$ and the {\em relative contact homology} $H_*(\AA,\p)$
is an invariant of $\Lambda$ up to Legendrian isotopy. 
\end{thm}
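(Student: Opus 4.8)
The plan is to follow the by-now-standard template for Legendrian contact homology, carried out in detail for $1$-jet spaces in \cite{EES, EES:2}; there are two ingredients, the identity $\p^2=0$ and invariance under Legendrian isotopy.

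First I would prove $\p^2=0$ by analysing the codimension-one boundary of two-dimensional moduli spaces. Fix a Reeb chord $\gamma$ and chords $\gamma_1,\dots,\gamma_k$, and consider $\MM(\gamma_1,\dots,\gamma_k,\gamma)$ with $\gamma$ at the positive puncture and the $\gamma_i$ at negative punctures, of dimension $2$, so that $\MM(\gamma_1,\dots,\gamma_k,\gamma)/\R$ is a $1$-manifold. Using the SFT compactness theorem \cite{BEHWZ} adapted to disks with boundary on the exact Lagrangian $\R\times\Lambda$, together with the assumptions that there are no closed Reeb orbits and (by exactness of the symplectization) no disk or sphere bubbling, the only degeneration that occurs is a two-level building: a disk with positive puncture at $\gamma$ and a negative puncture at some intermediate chord $\gamma'$, glued along $\gamma'$ to a rigid disk with positive puncture $\gamma'$ and negative punctures among the $\gamma_i$. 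A gluing theorem then identifies these broken configurations bijectively with the ends of the compactified $1$-manifold. With coherent orientations chosen as in \cite{BM} (or in the Legendrian setting \cite{EES}), the signed count of the boundary points of a compact oriented $1$-manifold vanishes, and this count is exactly the coefficient of $q_{\gamma_1}\cdots q_{\gamma_k}$ in $\p^2 q_\gamma$; hence $\p^2=0$.

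Next I would prove invariance. Given a Legendrian isotopy $\{\Lambda_t\}_{t\in[0,1]}$, after a small perturbation one may assume all chords remain nondegenerate except at finitely many parameters, at each of which exactly one elementary bifurcation occurs: a handle-slide, or the birth/death of a canceling pair of chords. Following Chekanov's bifurcation analysis, one shows that across a handle-slide the DGA changes by a tame isomorphism and across a birth/death by a stabilization, so that $(\AA_0,\p_0)$ and $(\AA_1,\p_1)$ are stable tame isomorphic and in particular have isomorphic homology. Alternatively, and more in the spirit of the cobordism morphisms used elsewhere in these notes, one can use the trace of the isotopy as an exact Lagrangian cylinder $L\subset\R\times Y$ asymptotic to $\R\times\Lambda_0$ at $-\infty$ and to $\R\times\Lambda_1$ at $+\infty$, count rigid holomorphic disks in $\R\times Y$ with boundary on $L$ and one positive puncture to obtain a DGA morphism $\Phi_{01}\colon(\AA_1,\p_1)\to(\AA_0,\p_0)$, and then use a concatenation argument (the join of $L$ with the cylinder of the reverse isotopy being Legendrian isotopic to the trivial cylinder, together with invariance of $\Phi$ under compactly supported deformations of $L$) to see that $\Phi_{01}$ and the morphism $\Phi_{10}$ of the reverse isotopy induce mutually inverse maps on homology. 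Either way, $H_*(\AA,\p)$ depends only on the Legendrian isotopy class of $\Lambda$.

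The hard part will be the analytic foundations: achieving transversality for all the relevant moduli spaces — including the parametrized families that appear in the bifurcation analysis, or the moduli spaces on the cobordism $L$ — and proving the SFT-type compactness and gluing theorems with Lagrangian boundary. In $1$-jet spaces these issues are resolved in \cite{EES, EES:2}, exploiting the rigidity of the relevant disks and, in higher dimensions, a reduction to Morse flow trees; for a general convex contact manifold one would instead need the polyfold machinery of \cite{HWZ} or an equivalent abstract perturbation scheme. A subsidiary difficulty is the coherent choice of orientations entering both steps, which can be handled as in \cite{EES} (or \cite{BM}).
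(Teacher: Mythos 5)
The paper offers no proof of this theorem at all: it is stated as a quoted result, with the remark that it ``is proved in~\cite{EES:2} for 1-jet spaces,'' and your outline reproduces exactly the standard argument of those references --- $\p^2=0$ from the codimension-one boundary of two-dimensional moduli spaces of punctured disks with coherent orientations, and invariance either by Chekanov-style bifurcation analysis yielding stable tame isomorphisms or by DGA morphisms from the exact Lagrangian trace of the isotopy. Your closing caveat is also accurate and matches the paper's own framing: for a general convex contact manifold without closed Reeb orbits (rather than a 1-jet space) the transversality, compactness and gluing foundations are not covered by \cite{EES, EES:2} and would require additional machinery such as \cite{HWZ}.
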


Now let us turn to disks with more than one positive puncture. By
analogy with absolute SFT, one would expect their counts to define
operations on relative contact homology. For example, denote by
$\mu:\AA\otimes\AA\to\AA$ the operation obtained by counting rigid
holomorphic disks in $\R\times Y$ with boundary on $\R\times\Lambda$
and two neighbouring positive punctures and an arbitrary number of
negative punctures on the boundary. Figure~\ref{fig:18} shows the
codimension one degenerations of a 1-dimensional 
moduli space of disks with two positive boundary punctures. 
\begin{figure}[h]
\begin{center}
\epsfbox{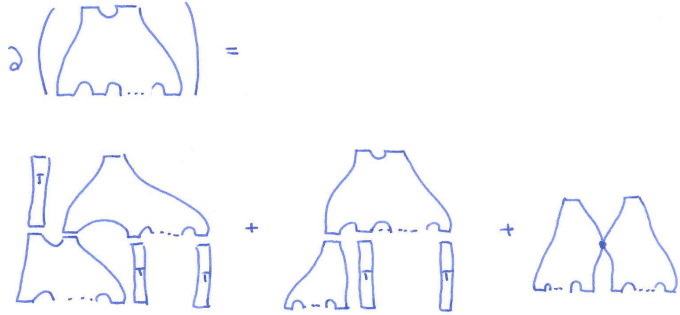}
\caption{Codimension 1 phenomena in a moduli space of disks with two
  positive punctures}
\label{fig:18}
\end{center}
\end{figure}

Without the last term, Figure~\ref{fig:18} would show that
$\mu\p+\p\mu=0$ and $\mu$ descends to $H_*(\AA,\p)$. However, this
conclusion does {\em not} hold due to the presence of the last term
and in general $\mu$ does {\em not} descend to an operation on
relative contact homology. So how do we proceed from here?

One way is based on the observation that the last term is the string
bracket (in $\Lambda$) 
of two moduli spaces with one positive
boundary puncture. Moreover, since the string bracket has degree
$3-n$, this term will for $n\geq 4$ involve moduli spaces of higher
dimensions. Motivated by the previous sections, let us assemble the
moduli spaces of {\em all} dimensions of holomorphic disks with
arbitrary numbers of positive and negative boundary punctures into a
{\em Hamiltonian} $\h$. Here the negative punctures correspond to the
variables $q_\gamma$ and the positive punctures to variables
$p_\gamma$ of degree
$$
   |p_\gamma| := n-2-\CZ(\gamma).
$$
For each holomorphic disk, the evaluation at the boundary gives a
broken closed string in $\Lambda$, which at the break points jumps
from one end point of a Reeb chord to another. 
Thus $\h$ belongs to the space $\PP$ of power series in the
non-commuting variables $p_\gamma$, $q_\gamma$ (finite in the
$q_\gamma$), with coefficients in the chains in the space of broken
closed strings, modulo cyclic permutations acting diagonally,
i.e. both on the chains and on the $p$'s and $q$'s. Consider the
following operations on the space $\PP$:
\begin{itemize}
\item the singular boundary operator $\p$;
\item the string bracket $[\ ,\ ]_\str$;
\item the SFT bracket $[\ ,\ ]_\SFT$ given by deleting a pair
  $p_\gamma,q_\gamma$ and gluing the broken closed strings along the
  corresponding Reeb chord;
\item the operation $\delta$ given by the action of a constant disk
  with one positive and one negative puncture at the same Reeb orbit
  $\gamma$ and with a fixed marked point on its boundary on the chains
  of broken closed strings by the loop-string-product as in
  Figure~\ref{fig:19}. 
\end{itemize}
\begin{figure}[h]
\begin{center}
\epsfbox{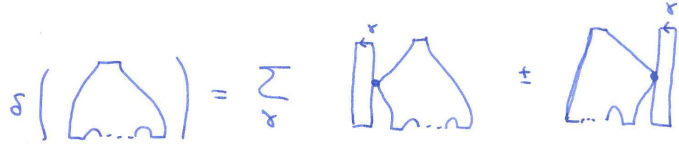}
\caption{The operation $\delta$}
\label{fig:19}
\end{center}
\end{figure}

\begin{conj}\label{conj:master-rel}
$[\ ,\ ]:=[\ ,\ ]_\SFT+[\ ,\ ]_\str$ defines a Lie bracket of degree
$3-n$ and $\p+\delta$ defines an operator of square zero and degree
$-1$ which is a derivation of this bracket. Moreover, the Hamiltonian
$\h$ satisfies the equation 
\begin{equation}\label{eq:master-rel}
   (\p+\delta)\h + \frac{1}{2}[\h,\h] = 0.
\end{equation}
The homology $H_*(\PP,\p+\delta+[\h,\cdot])$ is an invariant of
$\Lambda$ up to Legendrian isotopy. 
\end{conj}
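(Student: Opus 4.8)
The strategy parallels that of Theorem~\ref{thm:master-L}, with the space of broken closed strings in $\Lambda$ playing the role of the string space $\Sigma Q$ and Reeb chords replacing closed Reeb orbits as the data along which one cuts and glues. I would proceed in four steps. The first step is purely algebraic-topological: one must make precise the space of broken closed strings --- loops in $\Lambda$ with finitely many break points, at each of which the loop jumps from one endpoint of a Reeb chord to the other --- together with its complex of smooth chains modulo the diagonal cyclic action, in exact analogy with Section~\ref{sec:string}. On such chains $[\ ,\ ]_\str$ is the Chas--Sullivan loop bracket and $\delta$ is the loop--string product with a constant strip; adapting the arguments of~\cite{CS,CS2} as carried out in~\cite{CL:string}, one shows, on sufficiently transverse chains, that $[\ ,\ ]_\str$ is a Lie bracket of degree $3-n$, that $\delta^2=0$, $\p\delta+\delta\p=0$, and that $\p+\delta$ is a derivation of $[\ ,\ ]_\str$. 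The SFT bracket $[\ ,\ ]_\SFT$, obtained by deleting a pair $p_\gamma,q_\gamma$ and reconnecting the broken strings along the chord $\gamma$, also has degree $3-n$ in the present grading conventions ($|p_\gamma|+|q_\gamma|=n-3$) and is a Lie bracket for the usual SFT reasons; one checks that $\p+\delta$ is a derivation of it and that the mixed Jacobiator of $[\ ,\ ]_\str$ and $[\ ,\ ]_\SFT$ vanishes, since the two operations involve disjoint moves (one at a chord, one at a point of $\Lambda$). Hence $[\ ,\ ]=[\ ,\ ]_\SFT+[\ ,\ ]_\str$ is a Lie bracket of degree $3-n$ and $\p+\delta$ is a square-zero derivation of it.

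Second, I would prove the master equation~\eqref{eq:master-rel} by analysing the codimension-one boundary strata of the moduli spaces of $J$-holomorphic disks in $\R\times Y$ with boundary on $\R\times\Lambda$ and arbitrary collections of boundary punctures, viewed through the boundary-evaluation map as chains of broken strings; Figure~\ref{fig:18} is the case of two positive punctures. SFT compactness~\cite{BEHWZ}, here much simplified by the absence of closed Reeb orbits and, under the convexity and Maslov hypotheses, of sphere and disk bubbling, shows that the codimension-one boundary consists of: breaking off a holomorphic disk in the symplectization at a Reeb chord, contributing $\tfrac12[\h,\h]_\SFT$ after evaluation; a nodal degeneration in which the boundary string develops a transverse self-intersection, contributing $\tfrac12[\h,\h]_\str$; shrinking of a boundary string to a point, producing the constant-strip term $\delta\h$; and the ordinary face $\p\h$. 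All remaining strata (breaking off higher-dimensional families in a symplectization, etc.) yield degenerate chains on broken-string space and drop out, exactly as in the sketch of Theorem~\ref{thm:master-L}. Assembling the surviving strata with coherent orientations --- combining the SFT orientation package of~\cite{BM} with the disk conventions of~\cite{FOOO}, and including a treatment of self-gluing at a transverse self-intersection of a boundary curve --- then yields~\eqref{eq:master-rel}.

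Third, once~\eqref{eq:master-rel} is known, the square-zero property of $\p+\delta+[\h,\cdot]$ is formal: using that $\p+\delta$ is a derivation of $[\ ,\ ]$ to combine the cross terms $(\p+\delta)[\h,\cdot]$ and $[\h,(\p+\delta)\,\cdot\,]$ into $[(\p+\delta)\h,\cdot]$, and graded Jacobi to rewrite $[\h,[\h,\cdot]]$ as $\tfrac12[[\h,\h],\cdot]$, one gets
$$
(\p+\delta+[\h,\cdot])^2 = (\p+\delta)^2 + \Bigl[(\p+\delta)\h + \tfrac12[\h,\h],\ \cdot\ \Bigr] = 0,
$$
so the homology is defined. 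Invariance under Legendrian isotopy, and under changes of $\lambda$ and $J$, then follows the model of the invariance of relative contact homology~\cite{EES:2}: to a homotopy of data one associates, by counting holomorphic disks with boundary on an exact Legendrian cobordism, a morphism of the corresponding structures intertwining the two operators $\p+\delta+[\h,\cdot]$, and a second homotopy shows that any two such morphisms are chain homotopic; hence the induced map on homology is canonical and is an isomorphism when the data are isotopic.

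The main obstacle is, as for Theorem~\ref{thm:master-L}, twofold. Analytically, one needs transversality and gluing theorems for punctured disks with Legendrian boundary that are not yet available --- most notably for the self-gluing degeneration entering $[\h,\h]_\str$ --- presumably to be handled in the polyfold framework of~\cite{HWZ}. Equally serious is that the string-topology operations on broken-string space are only partially defined, so the structural identities of Step~1 and equation~\eqref{eq:master-rel} hold a priori only on transverse chains; making all the expressions, in particular ``diagonal'' terms such as $[\h,\h]_\str$ and $\delta\h$, rigorously meaningful requires either a coherent perturbation scheme in the spirit of Fukaya~\cite{Fu:06} or the chain-level machinery to be developed in~\cite{CL:string}.
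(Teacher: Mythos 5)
There is nothing to compare your proposal against: the statement is stated in the paper as Conjecture~\ref{conj:master-rel} and is \emph{not} proved there. The only support the paper offers is the boundary analysis of Figure~\ref{fig:18} (which shows why the naive product $\mu$ fails to descend and why the string bracket must enter), the analogy with Theorem~\ref{thm:master-L} (itself conjectural), and the definition of the operations on $\PP$ including $\delta$ via Figure~\ref{fig:19}; the closing technical remarks explicitly list chain-level string operations and the cobordism/homotopy machinery for invariance as open problems, and the text says the algebraic formulation and analytic foundations are ``the subject of current research''. Your plan reproduces this intended strategy faithfully --- broken closed strings in place of $\Sigma Q$, codimension-one strata giving $\p\h$, $\delta\h$, $\tfrac12[\h,\h]_\str$ and $\tfrac12[\h,\h]_\SFT$, the formal square-zero computation from the master equation, and an invariance argument modelled on~\cite{EES:2} --- but it is a plan, not a proof: every step you defer (chain-level definition of the partially defined operations, diagonal terms such as $[\h,\h]_\str$ and $\delta\h$, compactness/transversality/gluing for punctured disks with Legendrian boundary, coherent orientations including self-gluing, and the homotopy argument for invariance) is exactly the content whose absence makes this a conjecture rather than a theorem in the paper.

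Two points in your Step~1 deserve a warning, because you present them as routine checks when in the paper they are part of the conjecture itself. First, the algebraic home of everything is $\PP$, power series in \emph{non-commuting} variables $p_\gamma,q_\gamma$ with coefficients in chains of broken closed strings, taken modulo cyclic permutations acting diagonally on chains and variables; the claim that $[\ ,\ ]_\SFT+[\ ,\ ]_\str$ satisfies graded antisymmetry and Jacobi is a statement about this cyclic (necklace-type) structure and does not follow from ``the usual SFT reasons'' for the commutative Weyl formalism of Section~\ref{sec:SFT}. Second, your assertion that the mixed Jacobiator vanishes and that $\p+\delta$ is a derivation of $[\ ,\ ]_\SFT$ because the moves are ``disjoint'' is not obviously true: $\delta$ inserts a constant strip at an endpoint of the very Reeb chord along which $[\ ,\ ]_\SFT$ glues, so the two operations can interact at the same break point and the compatibilities must be verified (presumably at the chain level, in the framework of~\cite{CL:string}), not asserted. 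With those caveats, your outline is a correct reconstruction of the heuristic underlying the conjecture, but it does not close any of the gaps that the paper itself leaves open.
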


We see that, in contrast to the absolute case, in relative SFT string
topology already enters in the definition of the theory. 

For simplicity, we have formulated Conjecture~\ref{conj:master-rel} for
holomorphic disks and in the absence of closed Reeb orbits. For the
full relative SFT one should consider arbitrary compact Riemann
surfaces with boundary and positive and negative punctures in the
interior as well as on the boundary. The algebraic formulation,
analytical foundations, and applications of this theory are the
subject of current research. 

{\small 
\subsubsection*{Technical remarks}


In order to give Equation~\eqref{eq:master-rel} rigorous meaning one must
define the string operations on the chain level, with all the
technical difficulties this entails. For the invariance proof, one
also needs a discussion of the morphism induced by moduli spaces of
curves in a cobordism and of homotopies between such morphisms.
}



\begin{thebibliography}{999}

\bibitem{AS} A.~Abbondandolo and M.~Schwarz, {\em On the Floer homology of
    cotangent bundles}, Comm. Pure Appl. Math. {\bf 59}, 254--316 (2006)

\bibitem{BEHWZ} F.~Bourgeois, Y.~Eliashberg, H.~Hofer, K.~Wysocki and
  E.~Zehnder, {\em Compactness results in symplectic field theory},
  Geom. \& Topol.~7  (2003), 799--888   

\bibitem{BM} F.~Bourgeois and K.~Mohnke, {\em Coherent orientations in
    symplectic field theory}, Math.~Z. {\bf 248}, 123--146 (2004).




\bibitem{Chas} M.~Chas, {\em Combinatorial Lie bialgebras of curves on
    surfaces}, Topology 43, 2004, 543--568

\bibitem{CS} M.~Chas and D.~Sullivan, {\em String topology}, preprint 
math.GT/9911159 (1999).

\bibitem{CS2} M.~Chas and D.~Sullivan, {\em Closed string operators in 
    topology leading to Lie bialgebras and higher string algebra},
    math.GT/0212358.

\bibitem{Ch} Y.~Chekanov, {\em Differential algebra of Legendrian
  links}, Invent.~Math.~150  (2002),  no.~ 3, 441--483

\bibitem{CFL} K.~Cieliebak, K.~Fukaya and J.~Latschev, in preparation

\bibitem{CL:exact} K.~Cieliebak and J.~Latschev, {\em Symplectic field
  theory and string topology: the exact case}, in preparation.

\bibitem{CL:string} K.~Cieliebak and J.~Latschev, {\em Chain level
    string topology}, in preparation.

\bibitem{CM-comp} K.~Cieliebak and K.~Mohnke, {\em Compactness for
    punctured holomorphic curves}, Conference on Symplectic Topology,
    J. Symplectic Geom.  3  (2005),  no. 4, 589--654 

\bibitem{CM-trans} K.~Cieliebak and K.~Mohnke, {\em Symplectic hypersurfaces 
    and transversality in Gromov-Witten theory}, arXiv:math/0702887. 

\bibitem{CMS} K.~Cieliebak, I.~Mundet i Riera and D.~Salamon, {\em
    Equivariant moduli problems, branched manifolds, and the Euler
    class}, Topology~42 (2003), no.~3, 641--700


\bibitem{EES} T.~Ekholm, J.~Etnyre and M.~Sullivan, {\em The contact
    homology of Legendrian submanifolds in $\R^{2n+1}$},
    J.~Diff.~Geom.  71  (2005),  no. 2, 177--305

\bibitem{EES:2} T.~Ekholm, J.~Etnyre and M.~Sullivan, {\em Legendrian
    contact homology in $P\x \R$},  math/0505451

\bibitem{EGH} Y.~Eliashberg, A.~Givental and H.~Hofer, {\em Introduction to
    symplectic field theory}, GAFA 2000 Visions in Mathematics special volume,
    part II, 560--673

\bibitem{El} Y.~Eliashberg, {\em Symplectic field theory and its
    applications}, talk at the ICM 2006. 

\bibitem{Fu:06} K.~Fukaya, {\em Application of Floer homology of
    Lagrangian submanifolds to symplectic topology}, in: Morse
    theoretic methods in nonlinear analysis and in symplectic
    topology, ed. by P.~Biran, O.~Cornea and F.~Lalonde, 231--276

\bibitem{FOOO} K.~Fukaya, Y.~G.~Oh, H.~Ohta and K.~Ono, {\em Lagrangian
    intersection Floer homology - anomaly and obstruction}, preprint,
    2000, revision 2006


\bibitem{Ge:1} E.~Getzler, {\em Batalin-Vilkovisky algebras and
    two-dimensional topological field theories},
    Comm.~Math.~Phys.~{\bf 159}, 265--285 (1994). 



\bibitem{Go} W.~M.~Goldman, {\em Invariant functions on Lie groups and
    Hamiltonian flows of surface group representations},
    Invent.~Math.~85 (1986), 263--302

\bibitem{HWZ} H.~Hofer, K.~Wysocki and E.~Zehnder, {\em Polyfolds and
    Fredholm Theory I-V}, in preparation. 




\bibitem{Kr} O.~Kravchenko, {\em Deformations of Batalin-Vilkovisky
    algebras}, Poisson geometry (Warsaw, 1998), 131--139, Banach Center
   Publ., 51, Polish Acad. Sci., Warsaw, 2000






\bibitem{Ra} H.-B.~Rademacher, {\em On the average indices of closed
    geodesics}, J.~Diff.~Geom. {\bf 29}, 65--83 (1989). 

\bibitem{SW} D.~Salamon and J.~Weber, {\em Floer homology and the heat
    flow}, Geom.~Funct.~Anal.~16 (2006), no.~5, 1050--1138  




\bibitem{TT} D.~Tamarkin and B.~Tsygan, {\em Noncommutative
    differential calculus, homotopy BV algebras and formality
    conjectures}, Methods Funct. Anal. Topology 6 (2000), no. 2, 85--100 

\bibitem{Tu} V.~G.~Turaev, {\em Skein quantization of Poisson algebras
    of loops on surfaces}, Ann.~Sci.~Ecole Norm.~Sup. (4) 24 (1991),
    635--704

\bibitem{Vi} C.~Viterbo, {\em Functors and computations in Floer
    homology with applications}, preprint 1998. 


\end{thebibliography}
\end{document}